\DeclareMathOperator*{\essup}{ess\,sup}
\DeclareMathOperator{\sgn}{sgn}
\newtheorem{thm}{Theorem}[section]
\newtheorem*{thm*}{Theorem}
\newtheorem{propo}[thm]{Proposition}
\newtheorem{lem}[thm]{Lemma}
\newtheorem*{rem*}{Remark}
\newtheorem{rem}[thm]{Remark}
\newcommand{\R}{\mathbb{R}}
\def\a{\alpha}
\def\s{\sigma}
\begin{document}

\author[A. Nowak]{Adam Nowak}
\address{Adam Nowak \newline
Instytut Matematyczny,
Polska Akademia Nauk\newline
\'Sniadeckich 8,
00-956 Warszawa, Poland}
\email{adam.nowak@impan.pl}

\author[K. Stempak]{Krzysztof Stempak} 
\address{Krzysztof Stempak     \newline
      Instytut Matematyki i Informatyki,
      Politechnika Wroc\l{}awska       \newline
      Wyb{.} Wyspia\'nskiego 27,
      50--370 Wroc\l{}aw, Poland}
\email{krzysztof.stempak@pwr.wroc.pl}


\footnotetext{
\emph{2010 Mathematics Subject Classification:} Primary 42C10, 47G40; Secondary 31C15, 26A33.\\
\emph{Key words and phrases:} Laguerre expansion, Dunkl-Laguerre expansion, Laguerre operator,
Dunkl harmonic oscillator, negative power, potential operator, fractional integral, potential kernel.\\
\indent This research was supported by a grant from the National Science Centre of Poland.
}

\title[Potential operators]
	{Sharp estimates for potential operators \\associated with Laguerre and Dunkl-Laguerre expansions} 

\begin{abstract}
We study potential operators associated with Laguerre function expansions of convolution and Hermite types,
and with Dunkl-Laguerre expansions. We prove qualitatively sharp estimates of the corresponding
potential kernels. Then we characterize those $1\le p,q\le\infty$, 
for which the potential operators are $L^p-L^q$ bounded. These results are sharp analogues of the 
classical Hardy-Littlewood-Sobolev fractional integration theorem in the Laguerre and Dunkl-Laguerre settings.
\end{abstract}

\maketitle

\section{Introduction}
In recent years the study of potential theory for `Laplacians' associated with classical 
orthogonal expansions attracted considerable attention. 
The model case of the Riesz potentials $I^\s=(-\Delta)^{-\s}$, where
$\Delta$ denotes the Euclidean Laplacian in $\R^d$, $d\ge1$, is related to continuous expansions 
with respect to the system $\{\exp(2\pi i\langle \cdot,\xi\rangle) : \xi \in \R^d\}$. 
The $L^p-L^q$ boundedness of $I^\s$, $0<\s<d/2$, is characterized by the celebrated
Hardy-Littlewood-Sobolev theorem.

In \cite{BoTo1} Bongioanni and Torrea investigated potential operators related to the harmonic oscillator
$\mathcal H=-\Delta+\|x\|^2$, which plays the role of a Laplacian in the context of multi-dimensional 
Hermite function expansions. Some complementary comments on that research are contained in 
\cite[Section 2]{NoSt1}. More recently, in \cite{NoSt2} it was shown that the
$L^p-L^q$ bounds obtained in \cite{BoTo1} for the potential operator $\mathcal I^\s=\mathcal H^{-\s}$
are in fact sharp in the sense of admissible $p$ and $q$. 
This was achieved as a consequence of qualitatively sharp estimates for the integral
kernel of $\mathcal I^\s$ established also in \cite{NoSt2}.
A thorough study of potential operators associated with classical one-dimensional 
Jacobi and Fourier-Bessel expansions has just been furnished by Nowak and Roncal \cite{NR}. 
The corresponding analysis is based on sharp estimates of the potential kernels proved in that work.

Potential operators related to multi-dimensional Laguerre operators in $\R^d_+$, and to 
the Dunkl harmonic oscillator in $\R^d$ with the underlying reflection group isomorphic to $\mathbb Z^d_2$,
were investigated by the authors in \cite{NoSt1}.
Recall that the latter `Laplacian' is a differential-difference operator, and its eigenfunctions express via
certain Laguerre functions. Hence the associated expansions are sometimes referred to as
Dunkl-Laguerre expansions. 
The aim of \cite{NoSt1} was to prove $L^p-L^q$ bounds for the considered potential operators for a possibly
wide range of $p$ and $q$. Another objective was to obtain in a similar spirit 
two-weight $L^p-L^q$ bounds, with power weights involved. All these results in \cite{NoSt1} were derived
as indirect and somewhat tricky consequences of analogous theory for $\mathcal{I}^{\s}$, and under the
restriction $\a \in [-1/2,\infty)^d$ on the multi-parameter of type.

The present paper is motivated by the natural question to what extent the results of \cite{NoSt1} are
optimal in the sense of admissible $p$ and $q$. Further motivation comes from a related problem, but
certainly of independent interest, of describing the behavior of the relevant potential kernels via
pointwise estimates. Finally, yet another motivation follows from a desire
to get rid of the above mentioned restriction on $\a$.
All these inspirations found a positive outcome.
For technical reasons, we consider only $d=1$ and thus work in dimension one, 
otherwise the analysis we present
would become much more sophisticated. Then we investigate the settings from \cite{NoSt1}, that is,
according to the terminology used in \cite{Th},
the situations of Laguerre function expansions of convolution and Hermite types, 
and Dunkl-Laguerre expansions
(see Section \ref{sec:prel} for the definitions), with no artificial restrictions on $\a$ imposed.
We prove qualitatively sharp estimates for the relevant potential kernels 
(Theorems \ref{thm:conv_ker} and \ref{thm:d_ker}). 
Then we characterize those $1\le p,q \le \infty$, for which
the potential operators are $L^p-L^q$ bounded 
(Theorems \ref{thm:LpLqlag}, \ref{thm:lag_H} and \ref{thm:lag_D}).
In particular, it follows that the unweighted $L^p-L^q$ bounds from \cite{NoSt1} are in fact sharp,
at least in the one-dimensional case.

It is remarkable that our present results enable further research which is no doubt of interest, but
beyond the scope of this work. Let us mention here the following issues:
\begin{itemize}
\item characterization of weak type and restricted weak type inequalities for the potential operators
	(see \cite[Theorems 2.3, 2.4, 2.7, 2.8]{NR}),
\item characterization of two-weight $L^p-L^q$ inequalities for the potential operators, with power
	weights involved (see \cite[Theorems 1.2, 2.5, 3.3, 4.2, 6.2]{NoSt1}; note that it is known that
	at least some of these results are not optimal),
\item development of analogous theory for other variants of fractional integrals in the Laguerre and
	Dunkl-Laguerre settings, or more generally, for Laplace-Stieltjes type multipliers (see the comments
	closing \cite[Section 2]{NoSt1} and \cite[Section 3]{NoSt1}).
\end{itemize}

Finally, we remark that although the present framework is one-dimensional, it has, 
at least in the setting of Laguerre expansions of convolution type, 
a multi-dimensional background. More precisely, if $\a=n-1$, $n\ge1$, 
then the context of Laguerre function expansions of convolution type is related to 
a `radial' analysis in $\mathbb C^n$ equipped with twisted convolution, see \cite{St,Th} for details. 
Continuing this line of thought, we note that the system of Laguerre functions of Hermite type also
has a multi-dimensional connection, since it consists of eigenfunctions of the Hankel transform.

The paper is organized as follows. In Section \ref{sec:prel} we briefly introduce the settings to be
investigated and state the main results 
(Theorems \ref{thm:conv_ker}-\ref{thm:d_ker} and Theorem \ref{thm:lag_D}). The corresponding proofs
are contained in the two succeeding sections. In Section \ref{sec:pot} we show qualitatively sharp
estimates for the relevant potential kernels. Section \ref{sec:LpLq} is devoted to characterizing
$L^p-L^q$ boundedness of the Laguerre and Dunkl-Laguerre potential operators.

Throughout the paper we use a standard notation, which is consistent with that used in \cite{NoSt1,NoSt2}. 
In particular, we write $X\lesssim Y$ to indicate that $X\leq CY$ 
with a positive constant $C$ independent of significant quantities. We shall write $X \simeq Y$ when
simultaneously $X \lesssim Y$ and $Y \lesssim X$. Furthermore, 
$X\simeq\simeq  Y\exp(-cZ) $ means that there exist positive constants $C, c_1, c_2$, 
independent of significant quantities, such that
$$
C^{-1}Y\exp(-c_1Z)\le X\le C\,Y\exp(-c_2Z).
$$
In a number of places we will use natural and self-explanatory generalizations
of the ``$\simeq \simeq$'' relation, for instance, in connection with certain integrals
involving exponential factors. In such cases the exact meaning will be clear from the context.
By convention, ``$\simeq \simeq$'' is understood as ``$\simeq$'' whenever  no exponential
factors are involved.

We treat positive kernels and integrals as expressions valued in the extended half-line $[0,\infty]$.
Similar remark concerns expressions occurring in various estimates, with the natural limiting interpretations
like, for instance, $(0^+)^{\beta} = \infty$ when $\beta < 0$.

\section{Preliminaries and statement of results} \label{sec:prel}

We will consider two interrelated settings corresponding to one-dimensional Laguerre function expansions
of convolution type and of Hermite type. Also, we will study the one-dimensional 
context of Dunkl-Laguerre expansions
associated with the Dunkl harmonic oscillator and the underlying group of reflections isomorphic to
$\mathbb{Z}_2$. The latter situation may be regarded as an extension of that of Laguerre function expansions
of convolution type, see Section \ref{ssec:dunkl} below. All the three frameworks in question have deep
roots in the existing literature. In particular, in the last decade they were widely investigated
from the harmonic analysis perspective. For all the facts (tacitly)
invoked in what follows we refer to \cite{NoSt1} and references given there.
\subsection{Laguerre function setting of convolution type} \label{ssec:conv}
Let $\a> -1$. The Laguerre functions of convolution type are given by
$$
\ell_n^{\a}(x) = c_n^{\a}\, L_n^{\a}\big(x^2\big) \exp\big(-x^2/2\big), \qquad x > 0,
$$
where $c_n^{\a}>0$ are the normalizing constants, and $L_n^{\a}$, $n \ge 0$, are the classical Laguerre
polynomials. The system $\{\ell_n^{\a}:n \ge 0\}$ is an orthonormal basis in $L^2(d\mu_{\a})$, where
$\mu_{\a}$ is the measure on the half-line $\mathbb{R}_{+}=(0,\infty)$ defined by
$$
d\mu_{\a}(x) = x^{2\a+1}\, dx.
$$
The $\ell_n^{\a}$ are eigenfunctions of the Laguerre `Laplacian'
$$
L_{\a} = - \frac{d^2}{dx^2} -\frac{2\a+1}{x}\frac{d}{dx} + x^2,
$$
we have $L_{\a}\ell_n^{\a} = (4n+2\a+2)\ell_n^{\a}$. We denote by the same symbol $L_{\a}$ the
natural self-adjoint extension whose spectral resolution is given by the $\ell_n^{\a}$.
The integral kernel $G_t^{\a}(x,y)$ of the Laguerre heat semigroup $\{\exp(-tL_{\a})\}$ can be expressed
explicitly in terms of the modified Bessel function $I_{\a}$. More precisely,
\begin{equation} \label{hkl}
G_t^{\a}(x,y) = \frac{1}{\sinh 2t} \exp\Big( -\frac{1}2 \coth(2t) \big(x^2+y^2\big) \Big) (xy)^{-\a}
	I_{\a}\Big( \frac{xy}{\sinh 2t}\Big), \qquad x,y >0.
\end{equation}

Given $\s>0$, we consider the potential operator
$$
I^{\a,\s}f(x) = \int_0^{\infty} K^{\a,\s}(x,y)f(y)\, d\mu_{\a}(y), \qquad x > 0,
$$
where the potential kernel is defined as
$$
K^{\a,\s}(x,y) = \frac{1}{\Gamma(\s)} \int_0^{\infty} G_t^{\a}(x,y)\, t^{\s-1}\, dt, \qquad x,y > 0.
$$
We will prove the following general and qualitatively sharp estimates of $K^{\a,\s}(x,y)$. 
\begin{thm} \label{thm:conv_ker}
Let $\a>-1$ and let $\s>0$. The following estimates hold uniformly in $x,y>0$.
\begin{itemize}
\item[(i)] If $x+y \le 1$, then
$$
K^{\a,\s}(x,y) \simeq  \chi_{\{\s>\a+1\}} + \chi_{\{\s=\a+1\}} \log\frac{1}{x+y} 
	 + (x+y)^{-2\a-1} 
	\begin{cases}
		|x-y|^{2\s-1}, & \s< 1/2,\\
		1+\log \frac{x+y}{|x-y|}, & \s=1/2,\\
		(x+y)^{2\s-1}, & \s>1/2.
	\end{cases}
$$
\item[(ii)] If $x+y > 1$, then
$$
K^{\a,\s}(x,y) \simeq \simeq (x+y)^{-2\a-1} \exp\big(-c|x-y|(x+y)\big) 
	\begin{cases}
		|x-y|^{2\s-1}, & \s< 1/2,\\
		1+\log^+ \frac{1}{|x-y|(x+y)}, & \s=1/2,\\
		(x+y)^{1-2\s}, & \s>1/2.
	\end{cases}
$$
\end{itemize}
\end{thm}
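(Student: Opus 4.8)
The plan is to reduce everything to the explicit integral
$$
K^{\a,\s}(x,y) = \frac{1}{\Gamma(\s)} \int_0^{\infty} G_t^{\a}(x,y)\, t^{\s-1}\, dt,
$$
insert the closed formula \eqref{hkl} for $G_t^{\a}$, and carry out a careful asymptotic analysis of the resulting $t$-integral, splitting the range of integration at $t\simeq 1$. The main technical input is the sharp two-sided behaviour of the modified Bessel function: $I_{\a}(u)\simeq u^{\a}$ for $0<u\le 1$ and $I_{\a}(u)\simeq u^{-1/2}e^{u}$ for $u\ge 1$, together with the elementary facts $\sinh 2t\simeq t$, $\coth 2t\simeq 1/t$ for small $t$ and $\sinh 2t\simeq \coth 2t\simeq e^{2t}$, $e^{2t}$ for large $t$. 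After substituting, $G_t^{\a}(x,y)$ becomes, up to constants and exponential factors, a power of $t$ times a power of $xy/\sinh 2t$, and the whole kernel splits into a ``large $t$'' piece and a ``small $t$'' piece whose contributions one estimates separately.

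For the large-$t$ part, one checks directly that $\int_1^{\infty} G_t^{\a}(x,y)t^{\s-1}\,dt \simeq\simeq (x+y)^{-2\a-1}\exp(-c(x^2+y^2))$, with only polynomial corrections; this is subsumed in both cases (i) and (ii) and is never the dominant term when $x+y\le 1$, while for $x+y>1$ one must compare it against the small-$t$ contribution. For the small-$t$ part, the substitution $s = xy/\sinh 2t$ (or equivalently working with the variable $xy/t$) is natural, and the analysis bifurcates according to whether the Bessel argument $xy/\sinh 2t$ is large or small, i.e. roughly whether $t \lesssim xy$ or $t\gtrsim xy$. In the regime where the Bessel argument is large one picks up the factor $e^{xy/\sinh 2t}$, which combines with the Gaussian $\exp(-\frac12\coth(2t)(x^2+y^2))$ to produce $\exp(-\frac12\coth(2t)|x-y|^2)$ modulo harmless factors — this is exactly the mechanism that turns $x^2+y^2$ into $|x-y|^2$ and, in the regime $x+y>1$, into the factor $\exp(-c|x-y|(x+y))$ after optimizing in $t$. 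In the regime where the Bessel argument is small one instead gets a pure power $xy \cdot (xy/\sinh 2t)^{-\a-1}$, leading to the $\chi_{\{\s>\a+1\}}$, $\chi_{\{\s=\a+1\}}\log\frac1{x+y}$ and, if $\a+1$ is not crossed, the remaining power terms. Collecting the pieces, the trichotomy in $\s$ relative to $1/2$ emerges from the convergence/divergence at the endpoint $t\to 0$ of integrals of the shape $\int_0 t^{\s-1}\frac{dt}{t}\cdot(\text{something})$, with the logarithm appearing precisely at the borderline $\s=1/2$; similarly the dichotomy in $\s$ relative to $\a+1$ comes from the contribution of the ``Bessel-small'' regime where the integrand behaves like $t^{\s-\a-2}$ near the upper cutoff.

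The hard part will be making the case division clean and checking that the various regimes match up at their boundaries to give the \emph{sharp} two-sided bounds rather than merely upper bounds, in particular keeping track of the borderline logarithmic terms (both $\log\frac{1}{x+y}$ at $\s=\a+1$ and the $\log\frac{x+y}{|x-y|}$ / $\log^+\frac{1}{|x-y|(x+y)}$ factors at $\s=1/2$) and verifying that no cancellation occurs — since all quantities involved are positive, this is a matter of exhibiting the dominant subregion of the $t$-integral in each case. A secondary subtlety is the interplay, when $x+y>1$, between the off-diagonal behaviour governed by $|x-y|(x+y)$ and the global Gaussian decay; here one should think of $u=|x-y|$ and $v=x+y$ as independent variables and optimize the exponent $\frac12\coth(2t)u^2$ over the relevant range of $t$, which pins down the stated exponential rate. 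I would organize the proof as a sequence of lemmas isolating the two endpoint behaviours of $I_{\a}$ and the elementary hyperbolic asymptotics, then treat cases (i) and (ii) in turn, each subdivided by the size of $xy$ relative to $1$ and by the sign of $\s-1/2$ and $\s-\a-1$.
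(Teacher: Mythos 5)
Your plan coincides with the paper's own proof: the authors likewise insert \eqref{hkl}, use the two endpoint asymptotics of $I_{\a}$ (packaged as Lemma \ref{lem:her_lag}, which compares $G_t^{\a}$ with the Hermite heat kernel times $(xy\vee\sinh 2t)^{-\a-1/2}$), split the $t$-integral at the point where $\sinh 2t=xy$, and reduce to the sharp two-sided estimates of the model integrals $J_A(T,S)=\int_T^S t^A e^{-t}\,dt$ and $E_A(T,S)=\int_0^1 t^A e^{-T/t-St}\,dt$ imported from \cite{NoSt2}, followed by exactly the case analysis you anticipate. The one point to state more carefully is the exponential rate in (ii): it arises from minimizing $T/t+St$ with $T=(x-y)^2$ and $S=(x+y)^2$, i.e.\ from balancing $\coth(2t)|x-y|^2$ against $\tanh(t)(x+y)^2$, not from optimizing $\coth(2t)|x-y|^2$ alone.
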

Thus, among other things, we see that the kernel behaves
in an essentially different way, depending on whether $(x,y)$ is close to the origin of
$\mathbb{R}^2$ or far from it. We remark that
under the restrictions $\a \ge -1/2$ and $\s < \a+1$,
an upper bound for $K^{\a,\s}(x,y)$ was obtained recently in \cite[Proposition 5.1]{CiRo}.
 
The description of $K^{\a,\s}(x,y)$ from Theorem \ref{thm:conv_ker} 
enables a direct analysis of the potential operator $I^{\a,\s}$.
In particular, it allows us to characterize those $1\le p,q \le \infty$, for which $I^{\a,\s}$ is
$L^p-L^q$ bounded, see also Figure \ref{fig1} below.
\begin{thm} \label{thm:LpLqlag}
Let $\a > -1$, $\s>0$ and $1\le p,q \le \infty$.
\begin{itemize}
\item[(a)] If $\a \ge -1/2$, then $I^{\a,\s}$ is bounded from $L^p(d\mu_{\a})$ to $L^q(d\mu_{\a})$
if and only if
$$
\frac{1}{p} - \frac{\s}{\a+1} \le \frac{1}q < \frac{1}{p} + \frac{\s}{\a+1} \quad \textrm{and} \quad
\bigg(\frac{1}p,\frac1{q}\bigg) \notin 
	\bigg\{ \Big(\frac{\s}{\a+1},0\Big),\Big(1,1-\frac{\s}{\a+1}\Big)\bigg\}.
$$
\item[(b)] If $\a < -1/2$, then $I^{\a,\s}$ is bounded from $L^p(d\mu_{\a})$ to $L^q(d\mu_{\a})$
if and only if
$$
\frac{1}p + \frac{\s}{\a} \le \frac{1}q < \frac{1}p + \frac{\s}{\a+1}.
$$
\end{itemize}
\end{thm}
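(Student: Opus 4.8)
For the local operator, after the change of variables $u=x^2$, $v=y^2$ the measure $d\mu_{\a}$ becomes a constant multiple of $u^{\a}\,du$, and by Theorem~\ref{thm:conv_ker}(i) the kernel is, for $\s<\a+1$ (hence $\s<1/2$), comparable to $u^{-\a-\s}|u-v|^{2\s-1}$ near the diagonal and to $(\max(u,v))^{\s-\a-1}$ off it; for $\s\ge\a+1$ the local kernel is at worst logarithmic on $(0,1)$, and $I^{\a,\s}_{\mathrm{loc}}$ is then harmless apart from the obvious obstruction $L^1\nrightarrow L^\infty$ when $\s=\a+1$. In the main range this is a power-weighted fractional integral of order $2\s$ on an interval of $\R_{+}$ of homogeneous dimension $2\a+2$, and after a dyadic decomposition of the interval at the origin its $L^p(d\mu_{\a})$--$L^q(d\mu_{\a})$ boundedness reduces to the Stein--Weiss (weighted Hardy--Littlewood--Sobolev) inequality together with $\mu_{\a}((0,1))<\infty$. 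This gives boundedness of $I^{\a,\s}_{\mathrm{loc}}$ precisely for $1/q\ge 1/p-\s/(\a+1)$, the critical line being included by the strong-type Stein--Weiss endpoint, with the only failures at $(1/p,1/q)=(\s/(\a+1),0)$ and, dually, $(1,1-\s/(\a+1))$, where the off-diagonal term pairs against the borderline weight $y^{-1/p'}$ and the estimate fails logarithmically; the sharpness of Theorem~\ref{thm:conv_ker} guarantees that this is a genuine obstruction and not an artifact of over-estimation.

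For the global operator, Theorem~\ref{thm:conv_ker}(ii) makes $K^{\a,\s}$ super-exponentially small away from the diagonal, so that region is irrelevant. Near the diagonal $x+y\simeq x$ for large $x$, and the substitution $y=x+u/x$ converts $K^{\a,\s}(x,y)\,d\mu_{\a}(y)$, up to constants, into $x^{-2\s}\,\Phi(u)\,du$ with $\Phi\in L^1(\R)$ --- here all three cases of Theorem~\ref{thm:conv_ker}(ii), the logarithm included, are used. Two effects then compete. The multiplier $x^{-2\s}$, via Minkowski's inequality and then Hölder's inequality on the tail $\{x>1\}$, forces $x^{-2\s}\in L^{s}(x^{2\a+1}\,dx;\,x>1)$ with $1/s=1/q-1/p$, that is $1/q<1/p+\s/(\a+1)$, the endpoint being genuinely excluded (a slowly decaying power tail, tuned by a logarithm, is the counterexample). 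The averaging at scale $1/x$ is a fractional integral of order $2\s$ on a window of length $1/x$; summing over the dyadic blocks $x\simeq 2^{j}$ and tracking the power of $2^{j}$ produced by $\mu_{\a}$ yields the requirement $\a(1/q-1/p)<\s$, which for $\a<0$ reads $1/q\ge 1/p+\s/\a$, now with the endpoint admissible, since at criticality one sums in $\ell^{p}$ and uses $\ell^{p}\hookrightarrow\ell^{q}$ (necessarily $q\ge p$ there). Intersecting all the conditions: for $\a\ge-1/2$ the inequality $1/q\ge 1/p+\s/\a$ is implied by $1/q\ge 1/p-\s/(\a+1)$ and part~(a) results; for $\a<-1/2$ the prefactor $(x+y)^{-2\a-1}$ grows with $x+y$, making $1/q\ge 1/p+\s/\a$ the binding lower bound --- equivalently, $\|K^{\a,\s}(x,\cdot)\|_{L^{p'}(d\mu_{\a})}\simeq x^{-2\s-2\a/p}$ then grows in $x$ --- and part~(b) results.

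Throughout I use that $K^{\a,\s}$ is nonnegative and symmetric; symmetry makes the class of admissible exponents invariant under $(1/p,1/q)\mapsto(1-1/q,1-1/p)$, so it is enough to settle the lower critical line and only one of the two excluded corners. For necessity I would test: characteristic functions of the intervals $(a,2a)$ as $a\to 0^{+}$ to force the lower critical line of part~(a); power functions $y^{-\gamma}\chi_{(1,\infty)}$ with $\gamma\downarrow(2\a+2)/p$ to force $1/q<1/p+\s/(\a+1)$; for $\a<-1/2$, functions adapted to the growth of $\|K^{\a,\s}(x,\cdot)\|_{L^{p'}(d\mu_{\a})}$ to force $1/q\ge 1/p+\s/\a$; and the two excluded corners of part~(a) from the classical Hardy--Littlewood--Sobolev endpoint counterexample, one corner following from the other by the duality symmetry.

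The main obstacle will be the endpoint bookkeeping: showing that the lower critical line is included exactly except at those two corners in case~(a), and that in case~(b) it is replaced by the \emph{closed} line $1/q=1/p+\s/\a$ with no exceptions. This requires carefully matching the logarithmic refinements of Theorem~\ref{thm:conv_ker} against the precise strong- versus weak-type behavior of the relevant weighted fractional integral, and checking that neither the local/global splitting nor the dyadic summations introduces a spurious logarithmic loss at the boundary. A related but secondary point is identifying the exponent $\s/\a$ for $\a<-1/2$ and tracing it back to the growth of the potential kernel at infinity rather than to any local phenomenon.
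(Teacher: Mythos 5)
Your overall strategy coincides with the paper's: split $I^{\a,\s}$ into a local part (kernel supported in $\{x,y\le 2\}$) and a global part, read off each from Theorem \ref{thm:conv_ker}, and assemble. The tools differ, though: for the local part the paper transfers the problem to the Jacobi potential operators of \cite{NR} (whose kernel has identical local behavior), while you propose a direct Stein--Weiss reduction after $u=x^2$; for the global part the paper proves the $L^1-L^q$ and $L^p-L^1$ endpoints from the kernel-norm estimates of Lemma \ref{lem:ass}, dualizes, interpolates by Riesz--Thorin, and settles the critical line $1/q=1/p-2\s$ via a result of Auscher--Martell, whereas you propose dyadic blocks $x\simeq 2^j$ plus Young/HLS on each block and $\ell^p\hookrightarrow\ell^q$ summation. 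Your global bookkeeping is correct: the block norm is $2^{-j(2\s-2\a(1/q-1/p))}$, which yields $\a(1/q-1/p)\le\s$, and your observation that this is implied by the local constraint precisely when $\a\ge-1/2$ and becomes binding when $\a<-1/2$ matches the paper (whose necessity argument for (b) uses exactly the bumps $\chi_{(n,n+1/n)}$ you describe, and whose endpoint counterexample for $1/q=1/p+\s/(\a+1)$ is the log-tuned power tail you name). The dyadic route is viable, though note the global endpoint line $1/q=1/p-2\s$ is only needed inside the theorem's region when $\a=-1/2$, and there Young's inequality degenerates, so you must use strong-type HLS per block.

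There is, however, a concrete error in your local step: the claim that $I^{\a,\s}_{\mathrm{loc}}$ is bounded \emph{precisely} for $1/q\ge 1/p-\s/(\a+1)$ is false when $\a<-1/2$. The correct local threshold is $\s/\delta$ with $\delta=((-1/2)\vee\a)+1$ (Lemma \ref{lem:lag_loc}), i.e.\ $2\s$ when $\a<-1/2$, which is \emph{strictly more restrictive} than $\s/(\a+1)$ there. The reason is that $\mu_{\a}(B(x,r))\simeq r(x+r)^{2\a+1}$ is locally one-dimensional away from the origin, so the diagonal singularity $|x-y|^{2\s-1}$ near a point $x\simeq 1$ imposes the ordinary one-dimensional HLS obstruction $1/p-1/q\le 2\s$ (test with $\chi_{(1,1+h)}$, $h\to 0$); the ``homogeneous dimension $2\a+2$'' is seen only at the origin, and for $\a<-1/2$ the interior diagonal, not the origin, is the worse of the two competing obstructions. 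A correctly executed Stein--Weiss argument would in fact return $1/p-1/q\le 2\s$ as an additional condition, so your asserted region is an over-claim rather than something Stein--Weiss delivers. This error happens not to propagate to Theorem \ref{thm:LpLqlag}(b), because $\s/\a>-2\s>-\s/(\a+1)$ for $-1<\a<-1/2$, so the global constraint $1/q\ge 1/p+\s/\a$ strictly dominates both local candidates; but as written your local lemma is wrong for $\a<-1/2$ and the two competing local obstructions (origin versus interior diagonal) need to be separated before the final intersection of conditions can be trusted.
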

Note that the sufficiency part of Theorem \ref{thm:LpLqlag} (a) was essentially known to the authors earlier, 
even in the multi-dimensional case, see \cite[Theorem 4.1]{NoSt1}. Here, however, we give
a direct proof which offers a better insight into the structure of $I^{\a,\s}$. The necessity part, 
as well as item (b) in Theorem~\ref{thm:LpLqlag}, is new. 
It seems a bit surprising that the conditions in (a) and (b) are different, since many known
results related to the system $\{\ell_n^{\a}\}$ are homogeneous in $\a > -1$, without any `phase shift' at
$\a=-1/2$; see for instance \cite[Theorem 4.1]{NoSz} and \cite[Corollary 4.2]{NoSz}.
\begin{figure} [ht]
\includegraphics[width=0.9\textwidth]{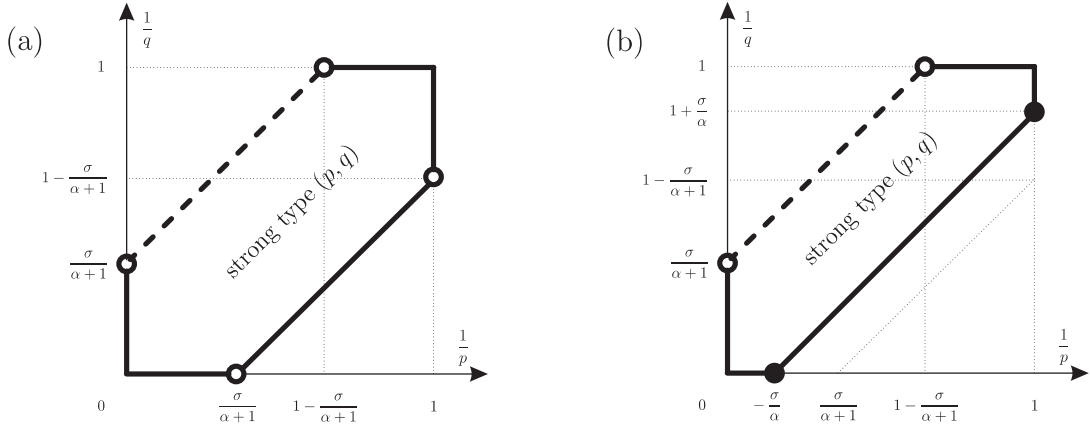}
\caption{Optimal sets of $\big(\frac{1}p,\frac{1}q\big)$ for which ${I}^{\a,\s}$ is $L^p-L^q$ bounded when 
	$\s < \alpha+1$; 
	(a) the case of $\a \ge -1/2$, (b) the case of $\a<-1/2$.}
 \label{fig1}
\end{figure}

\subsection{Laguerre function setting of Hermite type} \label{ssec:hermite}
This Laguerre context is derived from the previous one by modifying the Laguerre functions $\ell_n^{\a}$
so as to make the resulting system orthonormal with respect to Lebesgue measure $dx$ in $\mathbb{R}_+$.
Thus, given a parameter $\a > -1$, we consider the functions
$$
\varphi_n^{\a}(x) = x^{\a+1/2}\ell_n^{\a}(x), \qquad x > 0.
$$
Then the system $\{\varphi_n^{\a}:n\ge 0\}$ is an orthonormal basis in $L^2(dx)$. The associated
`Laplacian' is 
$$
L_{\a}^{H} = - \frac{d^2}{dx^2} + x^2 + \frac{(\a-1/2)(\a+1/2)}{x^2},
$$
and we have $L_{\a}^{H}\varphi_n^{\a} = (4n+2\a+2)\varphi_n^{\a}$.
The Laguerre heat semigroup $\{\exp(-tL^{H}_{\a})\}$, generated by means of the natural self-adjoint
extension of $L_H^{\a}$ in this context, has an integral representation. The associated heat kernel
is $(xy)^{\a+1/2}G^{\a}_t(x,y)$, $x,y>0$, see \eqref{hkl}.

For $\s>0$, consider the potential operator
$$
I_{H}^{\a,\s}f(x) = \int_0^{\infty} K_{H}^{\a,\s}(x,y)f(y)\, dy, \qquad x > 0,
$$
where
\begin{equation} \label{linkKH}
K_{H}^{\a,\s}(x,y) = (xy)^{\a+1/2} K^{\a,\s}(x,y), \qquad x,y > 0.
\end{equation}
Because of this simple link between the two potential kernels, Theorem \ref{thm:conv_ker} gives
qualitatively sharp estimates also for $K^{\a,\s}_{H}(x,y)$.
Then, taking into account the behavior of the kernel for $\a < -1/2$ and $x$ and $y$ small,
it is not hard to see that $I_{H}^{\a,\s}$ can be well defined on $L^p$ only if $\frac{1}p < \a +3/2$.
In fact, when  $\a < -1/2$ we have $L^p(dx)\subset {\rm Dom}\, I_{H}^{\a,\s}$ if and only if
$p\in(2/(2\a+3),\infty]$ (here ${\rm Dom}\, I_{H}^{\a,\s}$ denotes the natural domain of the integral
operator $I_{H}^{\a,\s}$). Note that this case is qualitatively different from the case $\a\ge-1/2$,
see the comments preceding \cite[Theorem 3.1]{NoSt1}. 
This restriction on $p$ together with a restriction on $q$
in Theorem \ref{thm:lag_H} (b) below is an instance of the so-called \emph{pencil phenomenon}, see
e.g.\ \cite{NoSj}.

The following result gives a complete and sharp description of $L^p-L^q$ boundedness of
$I^{\a,\s}_H$. It reveals that for $\a \ge -1/2$, $I_H^{\a,\s}$ behaves exactly like $I^{-1/2,\s}$,
see Figure \ref{fig1} (a) with $\alpha = -1/2$, and thus like the Hermite potential operator
$\mathcal{I}^{\s}$ (see the case of $I_D^{-1/2,\s}$ in Theorem \ref{thm:lag_D} below).
On the other hand, for $\a < -1/2$ the $L^p-L^q$
behavior of $I_H^{\a,\s}$ is more subtle, 
and partially this is caused by the restriction on $p$ mentioned above.
In particular, the region characterizing those
$(\frac{1}p,\frac{1}q)$ for which $I_H^{\a,\s}$ is $L^p-L^q$ bounded may take various peculiar shapes,
see Figure \ref{fig2} below.

\begin{thm} \label{thm:lag_H}
Let $\a > -1$, $\s>0$ and $1 \le p,q \le \infty$.  
\begin{itemize}
\item[(a)] If $\a \ge -1/2$, then ${I}^{\a,\s}_{H}$ is bounded from $L^p(dx)$ to $L^q(dx)$ if and only if
$$
\frac{1}p - 2\s \le \frac{1}q < \frac{1}p + 2\s \quad \textrm{and} \quad
	\bigg(\frac{1}p,\frac{1}q\bigg) \notin \big\{(2\s,0), (1,1-2\s)\big\}.
$$
\item[(b)] If $\a< -1/2$ and $p > 2/(2\a+3)$, 
then ${I}^{\a,\s}_{H}$ is bounded from $L^p(dx)$ to $L^q(dx)$ if and only if
$$
\frac{1}p - 2\s \le \frac{1}q < \frac{1}p + 2\s \quad  \textrm{and} \quad \frac{1}q > -\a-\frac{1}2.
$$
\end{itemize}
\end{thm}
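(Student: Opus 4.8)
\textbf{Proof plan for Theorem \ref{thm:lag_H}.}
The starting point is the link \eqref{linkKH}, which reduces everything to the kernel estimates of Theorem \ref{thm:conv_ker}: for $\a\ge-1/2$ the extra factor $(xy)^{\a+1/2}$ is $\lesssim 1$ on the near-diagonal region $\{x+y\le 1\}$ and for $\a<-1/2$ it produces the singularity responsible for the restriction $p>2/(2\a+3)$. The plan is to transfer the problem to a weighted $L^p$-$L^q$ inequality for the operator $I^{\a,\s}$ against the measure $d\mu_\a$: writing $f=g\,x^{-\a-1/2}$ one checks that $I_H^{\a,\s}$ is bounded $L^p(dx)\to L^q(dx)$ if and only if $I^{\a,\s}$ is bounded from $L^p(x^{(\a+1/2)p}\,d\mu_\a)$ to $L^q(x^{-(\a+1/2)q}\,d\mu_\a)$. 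However, rather than invoke a general weighted theorem, I would argue directly from the pointwise kernel bounds, exactly as in the proof of Theorem \ref{thm:LpLqlag}, splitting the kernel into its near-origin part (region $x+y\le 1$) and its off-diagonal/large part (region $x+y>1$), and within each of these into the three $\s$-regimes and the on/off-diagonal pieces $|x-y|\lesssim x+y$ versus $|x-y|\simeq x+y$.

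\emph{Sufficiency.} On $\{x+y>1\}$ the Gaussian-type factor $\exp(-c|x-y|(x+y))$ is present in $K_H^{\a,\s}$ just as in $K^{\a,\s}$, so this part of the operator has a kernel which is, up to the harmless polynomial weights, dominated by a rapidly decaying convolution-type kernel; a Schur-test (or Young-type) argument shows it is bounded $L^p(dx)\to L^q(dx)$ for \emph{all} $1\le p\le q\le\infty$ as well as for $q<p$ in the appropriate range, so it never obstructs. The substance is the local part $\{x+y\le1\}$, where after inserting the $(xy)^{\a+1/2}$ factor the kernel behaves like a sum of (a) a bounded term plus a logarithm when $\s=\a+1$ — contributing nothing essentially new — and (b) a homogeneous Riesz-type kernel $(x+y)^{-2\a-1+\min(2\s,1)}(xy)^{\a+1/2}$ times a factor $|x-y|^{2\s-1}$, $1+\log\frac{x+y}{|x-y|}$, or $1$ according as $\s\lessgtr1/2$. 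For this local piece I would use the homogeneity in $(x,y)$: on dyadic annuli $x+y\simeq 2^{-k}$, $k\ge0$, the operator restricted to that annulus is, after rescaling $x\mapsto 2^{-k}x$, unitarily equivalent to a fixed operator with an $L^1$-type kernel on a bounded set, and the scaling of Lebesgue measure produces the factor $2^{-k(1/q-1/p+2\s)}$ (using $2\a+2$ and $2\a+1$ bookkeeping to convert between $d\mu_\a$ and $dx$). Summing the geometric series in $k$ converges precisely when $\frac1q<\frac1p+2\s$, which is the announced open condition; the two excluded endpoints $(2\s,0)$ and $(1,1-2\s)$ (for $\a\ge-1/2$), respectively the endpoint condition $\frac1q>-\a-\frac12$ (for $\a<-1/2$), come from the marginal logarithmic cases $\s=1/2$, from the behaviour at $p=1$ or $q=\infty$, and — in case (b) — from integrability of $x^{-(\a+1/2)q}$ near $0$, i.e. the pencil phenomenon. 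The lower constraint $\frac1p-2\s\le\frac1q$ is forced in the regime $q\ge p$ by duality: $(I_H^{\a,\s})^*$ has kernel $K_H^{\a,\s}(y,x)$, which is of the same type, so the condition must be symmetric under $(\frac1p,\frac1q)\mapsto(1-\frac1q,1-\frac1p)$.

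\emph{Necessity.} This is the part requiring genuine construction of test functions, and it is the main obstacle. For the constraint $\frac1q<\frac1p+2\s$ and the endpoint exclusions I would test $I_H^{\a,\s}$ against $f_{\delta}=\chi_{(0,\delta)}$ (or against $\chi_{(0,\delta)}x^{-(\a+1/2)}$ transported through the link) and let $\delta\to0^+$, using the lower kernel bounds from Theorem \ref{thm:conv_ker}(i) to get a lower bound on $\|I_H^{\a,\s}f_\delta\|_q/\|f_\delta\|_p$ that blows up unless the scaling inequality holds, with logarithmic divergence exactly at the endpoints; the case $p=1$ or $q=\infty$ needs separate approximate-identity / constant test functions. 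For the constraint $\frac1p-2\s\le\frac1q$ one dualizes and repeats. The extra necessity condition $\frac1q>-\a-\frac12$ in (b) is obtained by testing with a function supported away from the origin but letting its mass concentrate near a point, so that $I_H^{\a,\s}f$ inherits the non-integrable singularity $x^{-(\a+1/2)}$ near $0$ coming from $\varphi_n^\a\sim x^{\a+1/2}$; this forces $q$ such that $x^{-(\a+1/2)q}$ is locally integrable, i.e. $(\a+1/2)q<1$, equivalently $\frac1q>\a+1/2$... one must be careful with the sign since $\a+1/2<0$ here, and indeed the correct reading is $\frac1q>-(\a+1/2)$, matching the condition that $L^p(dx)\subset\mathrm{Dom}\,I_H^{\a,\s}$ only for $p>2/(2\a+3)$. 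Assembling the two halves gives the stated characterization in both (a) and (b).
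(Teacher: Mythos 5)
Your overall strategy (reduce to the kernel bounds of Theorem \ref{thm:conv_ker} via \eqref{linkKH} and split into a near-origin part and a far part) matches the paper's, which splits at $x\vee y=2$, handles the local piece by comparison with the Jacobi potential operators of \cite{NR}, and handles the global piece by domination by $K^{-1/2,\s}_{\infty}$ when $\a\ge-1/2$ and by a separate treatment of the off-diagonal operators $U_1^{\a,\s}$, $U_2^{\a,\s}$ when $\a<-1/2$. However, there is a concrete error in how you distribute the two constraints between the two regions, and it leaves the necessity of the key condition $\frac1q<\frac1p+2\s$ unproved.

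You claim that the far part ``never obstructs'' and that the condition $\frac1q<\frac1p+2\s$ arises from summing dyadic annuli $x+y\simeq2^{-k}$ near the origin. This is backwards. The local piece lives on a finite measure space, so if it is $L^p\to L^q$ bounded it is automatically $L^p\to L^{q_1}$ bounded for every $q_1\le q$; hence the local part can only impose a constraint of the form $\frac1q\ge(\cdots)$, and the correct dyadic bookkeeping (the $k$-th diagonal block of the kernel $|x-y|^{2\s-1}$ has $L^p(dx)\to L^q(dx)$ norm $\simeq 2^{-k(2\s+1/q-1/p)}$, with $+1/q-1/p$, not $+1/p-1/q$) yields exactly the lower constraint $\frac1q\ge\frac1p-2\s$. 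Consistently, your test functions $\chi_{(0,\delta)}$ with $\delta\to0^+$ give a norm ratio $\gtrsim\delta^{\,2\s+1/q-1/p}$, which blows up precisely when $\frac1q<\frac1p-2\s$: they detect the lower constraint, not the upper one. The upper constraint $\frac1q<\frac1p+2\s$ is a phenomenon at infinity (it can only fail for $q<p$, and the obstruction comes from the infinite mass far from the origin, where the Gaussian factor makes the operator essentially diagonal); to prove its necessity one must test with functions living at infinity, e.g.\ the slowly decaying tail $f(y)=\chi_{\{y>e\}}y^{-1/p}(\log y)^{-1/p-2\s}$ or bumps $\chi_{(n,n+1/n)}$ with $n\to\infty$, as in the proof of Lemma \ref{lem:lag_glob}. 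Since your duality step only transfers a constraint already established, and your concentrating test functions establish only the lower one, the necessity of $\frac1q<\frac1p+2\s$ is a genuine gap. Two smaller points: the sufficiency on the critical line $\frac1q=\frac1p-2\s$ with $1<p$, $q<\infty$ is a Hardy--Littlewood--Sobolev endpoint and does not follow from a Schur test or from interpolating the off-critical strong-type bounds --- the paper invokes \cite{NR} locally and \cite{AM} together with Marcinkiewicz interpolation globally; and in (b) the singularity of the kernel at the origin is $x^{\a+1/2}$ (which blows up since $\a+1/2<0$), not $x^{-(\a+1/2)}$, though you do land on the correct condition $\frac1q>-\a-\frac12$.
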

Note that the sufficiency part of Theorem \ref{thm:lag_H} (a) was known earlier, 
even in the multi-dimensional case, see \cite[Theorem 3.1]{NoSt1}.
Apart from that, the result is new.
\begin{figure} [ht]
\includegraphics[width=0.9\textwidth]{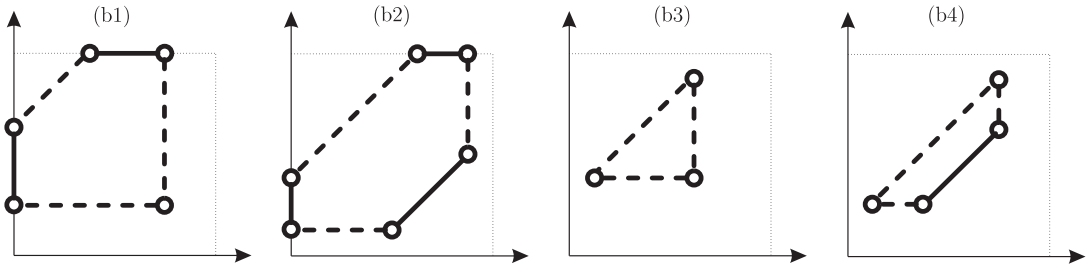}
\caption{Shapes of optimal sets of $\big(\frac{1}p,\frac{1}q\big)$ 
	for which ${I}_H^{\a,\s}$ is $L^p-L^q$ bounded when 
	$\s < 1/2$ and $\a < -1/2$;
	(b1) the case of $\s \ge \a+1$ and $\s > -\a/2-1/4$,
	(b2) the case of $-\a/2-1/4 < \s < \a +1$,
	(b3) the case of $\a+1 \le \s \le -\a/2-1/4$,
	(b4) the case of $\s < \a+1$ and $\s \le -\a/2-1/4$.
	Particular choices of $\a$ and $\s$ are different for each picture.
	}
 \label{fig2}
\end{figure}

\subsection{Dunkl-Laguerre setting} \label{ssec:dunkl}
Let $\a > -1$. The generalized Hermite functions are given by
$$
h_n^{\a}(x) = \frac{1}{\sqrt{2}} 
	\begin{cases}
		(-1)^{n/2} \ell_{n/2}^{\a}(x), & n \; \textrm{even},\\
		(-1)^{(n-1)/2} x\ell_{(n-1)/2}^{\a+1}(x), & n \; \textrm{odd},
	\end{cases}
$$
where $\ell_n^{\a}$ are the Laguerre functions of convolution type naturally extended to $\mathbb{R}$ 
as even functions. The system $\{h_n^{\a}:n \ge 0\}$ is an orthonormal basis in $L^2(dw_{\a})$, where
$w_{\a}$ is the even extension of $\mu_{\a}$,
$$
dw_{\a}(x) = |x|^{2\a+1}\, dx, \qquad x \in \mathbb{R}.
$$
Notice that expansions of even functions with respect to $\{h_n^{\a}\}$ reduce to expansions with respect
to the Laguerre system $\{\ell_n^{\a}\}$.
The $h_n^{\a}$ are eigenfunctions of the one-dimensional 
Dunkl harmonic oscillator 
$$
L_{\a}^{D}f(x) = L_{\a}f(x) + (\a+1/2)\frac{f(x)-f(-x)}{x^2}
$$
(notice that this is a differential-difference `Laplacian')
and one has $L_{\a}^D h_n^{\a} = (2n+2\a+2)h_n^{\a}$. We use the same symbol to denote the natural,
in this situation, self-adjoint extension of $L_{\a}^D$.

For $\a=-1/2$ this setting coincides with that of classical Hermite function expansions. Note that the
parameter $\a$ represents the so-called (in the Dunkl theory) multiplicity function. This function
is trivial when $\a=-1/2$, positive when $\a>-1/2$, and negative for $\a< -1/2$. The latter case is
exotic in the sense that positivity of the multiplicity function is crucial in several important aspects
of the Dunkl theory. As we shall see, the positivity turns out to be meaningful also in our developments.

The Dunkl-Laguerre heat semigroup $\{\exp(-tL_{\a}^D)\}$ possesses an integral representation
(with integration against $dw_{\a}$), and the integral kernel of $\exp(-t L_{\a}^D)$ is
$$
{G}_t^{\a,D}(x,y) = \frac{1}{2}(\sinh 2t)^{-\a-1} \exp\Big( -\frac{1}2 \coth(2t)(x^2+y^2)\Big)
	\Phi_{\a}\Big(\frac{xy}{\sinh 2t}\Big), \qquad x,y \in \mathbb{R},
$$
where $\Phi_{\a}$ is a continuous function on the real line defined by
$$
\Phi_{\a}(u) := |u|^{-\a}\big[ I_{\a}(|u|) + \sgn(u) I_{\a+1}(|u|)\big],
$$
with the value $\Phi_{\a}(0) = 2^{-\a}/\Gamma(\a+1)$ understood in the limiting sense.
By the standard asymptotics (cf.\ \cite[(5.16.4), (5.16.5)]{Leb})
\begin{equation} \label{asym}
I_{\nu}(u) \simeq u^{\a}, \quad u \to 0^+, \qquad \textrm{and} 
\qquad I_{\nu}(u) \simeq u^{-1/2}e^u, \quad u \to \infty,
\end{equation}
and strict positivity of $I_{\nu}(u)$, $u > 0$,
it is straightforward to see that $|\Phi_{\a}(u)| \lesssim |u|^{-\a}I_{\a}(|u|)$, $u\in \mathbb{R}$,
and $\Phi_{\a}(u) \simeq u^{-\a}I_{\a}(u)$, $u > 0$. This together with \eqref{hkl} implies
\begin{align}
|G_t^{\a,D}(x,y)| & \lesssim G_t^{\a}(|x|,|y|), \qquad x,y \in \mathbb{R}, \label{kke1}\\
G_t^{\a,D}(x,y) & \simeq G_t^{\a}(x,y), \qquad x,y > 0. \label{kke2}
\end{align}
A more detailed analysis (see Section \ref{ssec:de} below) reveals that $G_t^{\a,D}(x,y)$ is strictly positive
when either $\a \ge -1/2$ or $xy \ge 0$, but for $\a<-1/2$ and $xy < 0$ it attains also negative
values. It seems that this phenomenon has not been properly noticed before.

As in the previous settings, for $\s>0$ we consider the potential operator
$$
I_{D}^{\a,\s}f(x) = \int_{-\infty}^{\infty} K_{D}^{\a,\s}(x,y)f(y)\, dw_{\a}(y), \qquad x \in \mathbb{R},
$$
where the potential kernel is expressed via $G_t^{\a,D}(x,y)$ as
$$
K_{D}^{\a,\s}(x,y) = \frac{1}{\Gamma(\s)} \int_{0}^{\infty} G_t^{\a,D}(x,y) t^{\s-1}\, dt, 
	\qquad x,y \in \mathbb{R}.
$$
In view of \eqref{kke1} and \eqref{kke2}, we have
\begin{align}
|K_D^{\a,\s}(x,y)| & \lesssim K^{\a,\s}(|x|,|y|), \qquad x,y \in \mathbb{R}, \label{kkp1}\\
K_D^{\a,\s}(x,y) & \simeq K^{\a,\s}(x,y), \qquad x,y > 0. \label{kkp2}
\end{align}
These two relations deliver enough information to obtain a characterization of $L^p-L^q$ boundedness
of $I^{\a,\s}_D$. Nevertheless, the question of an exact description of $K_{D}^{\a,\s}(x,y)$ is an
important problem in its own right.
The result below provides qualitatively sharp estimates of $K_{D}^{\a,\s}(x,y)$ for $\a>-1/2$
(the case of a positive multiplicity function).
\begin{thm} \label{thm:d_ker}
Let $\a>-1/2$ and let $\s > 0$. The following estimates hold uniformly in $x,y \in \mathbb{R}$.
\begin{itemize}
\item[(A)] Assume that $xy \ge 0$, i.e. $x$ and $y$ have the same sign.
\begin{itemize}
\item[(A1)] If $|x|+|y| \le 1$, then
\begin{align*}
{K}_D^{\a,\s}(x,y) & \simeq  \chi_{\{\s>\a+1\}} + \chi_{\{\s=\a+1\}} \log\frac{1}{|x|+|y|} \\
& \quad	 + (|x|+|y|)^{-2\a-1} 
	\begin{cases}
		|x-y|^{2\s-1}, & \s< 1/2,\\
		1+\log \frac{|x|+|y|}{|x-y|}, & \s=1/2,\\
		(|x|+|y|)^{2\s-1}, & \s>1/2.
	\end{cases}
\end{align*}
\item[(A2)] If $|x|+|y| > 1$, then
\begin{align*}
{K}_D^{\a,\s}(x,y) & \simeq \simeq (|x|+|y|)^{-2\a-1} \exp\big(-c|x-y||x+y|\big) \\
& \qquad \times	\begin{cases}
		|x-y|^{2\s-1}, & \s< 1/2,\\
		1+\log^+ \frac{1}{|x-y||x+y|}, & \s=1/2,\\
		(|x|+|y|)^{1-2\s}, & \s>1/2.
	\end{cases}
\end{align*}
\end{itemize}
\item[(B)] Assume that $xy< 0$, i.e. $x$ and $y$ have opposite signs.
\begin{itemize}
\item[(B1)] If $|x|+|y| \le 1$, then
\begin{align*}
{K}_D^{\a,\s}(x,y) & \simeq  \chi_{\{\s>\a+1\}} + \chi_{\{\s=\a+1\}} \log\frac{1}{|x|+|y|} \\
& \quad	 + (|x|+|y|)^{-2\a-1} (|x|+|y|)^{2\s-1}.
\end{align*}
\item[(B2)] If $|x|+|y| > 1$, then
\begin{align*}
{K}_D^{\a,\s}(x,y) & \simeq \simeq (|x|+|y|)^{-2\a-1} \exp\big(-c|x-y||x+y|\big) \\
& \qquad \times		(|x|+|y|)^{1-2\s} (|x|+|y|)^{-4}.
\end{align*}
\end{itemize}
\end{itemize}
\end{thm}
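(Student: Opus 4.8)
The plan is to reduce the theorem to the already established Theorem~\ref{thm:conv_ker} by means of the exact splitting
$$
G_t^{\a,D}(x,y)=\tfrac12\,G_t^{\a}(|x|,|y|)+\tfrac12\,xy\,G_t^{\a+1}(|x|,|y|),\qquad x,y\in\mathbb R,\ t>0,
$$
which one obtains either by separating the spectral series of $\exp(-tL_{\a}^{D})$ according to the parity of $n$ (the even indices reproduce $G_t^{\a}$, while the odd indices, since $h_{2k+1}^{\a}$ carries the factor $x\ell_k^{\a+1}$, reproduce $xy$ times $G_t^{\a+1}$), or directly from the displayed formula for $G_t^{\a,D}$ after writing $xy\,|xy|^{-\a-1}=\sgn(xy)\,|xy|^{-\a}$ and using \eqref{hkl}. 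Integrating against $t^{\s-1}dt/\Gamma(\s)$ gives
$$
K_D^{\a,\s}(x,y)=\tfrac12\,K^{\a,\s}(|x|,|y|)+\tfrac12\,xy\,K^{\a+1,\s}(|x|,|y|).
$$
Since $\a>-1/2$, one has $I_{\a+1}(v)<I_{\a}(v)$ for all $v>0$ (a classical monotonicity property of modified Bessel functions), whence $|xy|\,G_t^{\a+1}(|x|,|y|)\le G_t^{\a}(|x|,|y|)$ and therefore $|xy|\,K^{\a+1,\s}(|x|,|y|)\le K^{\a,\s}(|x|,|y|)$; in particular $G_t^{\a,D}>0$, and for $xy\ge0$ we get $\tfrac12K^{\a,\s}(|x|,|y|)\le K_D^{\a,\s}(x,y)\le K^{\a,\s}(|x|,|y|)$. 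Part~(A) then follows from Theorem~\ref{thm:conv_ker} applied to $(|x|,|y|)$, since $|x-y|=\big||x|-|y|\big|$ when $xy\ge0$.

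Part~(B) is where the work lies. For $xy<0$ the splitting reads $K_D^{\a,\s}(x,y)=\tfrac12\big[K^{\a,\s}(|x|,|y|)-|x||y|\,K^{\a+1,\s}(|x|,|y|)\big]$, a difference of two \emph{comparable} positive kernels, so subtracting the sharp bounds of Theorem~\ref{thm:conv_ker} term by term does not work: the leading terms cancel. Instead I would track the cancellation inside the integrand. Writing $X=|x|$, $Y=|y|$, $v=XY/\sinh 2t$,
$$
G_t^{\a}(X,Y)-XY\,G_t^{\a+1}(X,Y)=\frac{1}{\sinh 2t}\exp\!\Big(-\tfrac12\coth(2t)(X^2+Y^2)\Big)(XY)^{-\a}\big[I_{\a}(v)-I_{\a+1}(v)\big],
$$
and the crucial ingredient is the uniform two-sided Bessel estimate
$$
I_{\a}(v)-I_{\a+1}(v)\;\simeq\;\min\!\big(1,v^{-1}\big)\,I_{\a}(v),\qquad v>0,
$$
valid for $\a>-1/2$. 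It rests on the strict inequality $I_{\a+1}<I_{\a}$ on $(0,\infty)$ together with the refined asymptotics $I_{\a}(v)-I_{\a+1}(v)\sim\frac{2\a+1}{2\sqrt{2\pi}}\,v^{-3/2}e^{v}$ as $v\to\infty$ (from the full asymptotic expansion of $I_{\nu}$, cf.\ \cite{Leb}), whose coefficient is positive precisely because $\a>-1/2$, supplemented by continuity and compactness on $(0,1]$ and on $[1,\infty)$. It yields the two-sided estimate
$$
K_D^{\a,\s}(x,y)\;\simeq\;\frac{1}{\Gamma(\s)}\int_0^{\infty}G_t^{\a}(X,Y)\,\min\!\Big(1,\frac{\sinh 2t}{XY}\Big)\,t^{\s-1}\,dt .
$$

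It remains to estimate this integral, which I would split at $t_0$ given by $\sinh 2t_0=XY$. On $(t_0,\infty)$ the extra factor is $\simeq1$ and $v\le1$, so the integrand is $\simeq(\sinh 2t)^{-\a-1}\exp(-\tfrac12\coth(2t)(X^2+Y^2))\,t^{\s-1}$; because $X^2+Y^2\simeq(X+Y)^2$ whenever $XY\le\sinh 2t$, this piece depends on $(X,Y)$ essentially only through $X+Y$ and carries no diagonal singularity. On $(0,t_0)$ one has $v\ge1$, so $G_t^{\a}(X,Y)\simeq(XY)^{-\a-1/2}(\sinh 2t)^{-1/2}\exp\big(-\tfrac{1}{2\sinh 2t}[(\cosh 2t-1)(X^2+Y^2)+(X-Y)^2]\big)$ and the weighted integrand becomes $\simeq(XY)^{-\a-3/2}(\sinh 2t)^{1/2}\exp(\cdots)\,t^{\s-1}$, so the factor $\sinh(2t)/(XY)$ removes exactly the small-$t$ range that produces the diagonal singularity of $K^{\a,\s}$. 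Evaluating both pieces by the same incomplete-Gamma and Laplace-type computations used for Theorem~\ref{thm:conv_ker} — distinguishing $|x|+|y|\le1$ from $|x|+|y|>1$, and recalling that in Case~(B) one has $|x-y|=|x|+|y|$ and $|x+y|=\big||x|-|y|\big|$ — produces the ``$\s>1/2$-type'' brackets with the announced gains: the collapse of the $\s<1/2$ and $\s=1/2$ subcases into the single term $(|x|+|y|)^{2\s-1}$ in (B1), and the additional decay $(|x|+|y|)^{-4}$ in (B2), the ``$\simeq\simeq$'' convention absorbing the polynomial factors in $|x-y||x+y|$ thrown off by the Laplace evaluation. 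I expect the Bessel estimate above to be the main obstacle: it is the conceptual heart of the argument, it is precisely what breaks down for $\a<-1/2$ (where $I_{\a+1}(v)$ eventually exceeds $I_{\a}(v)$ and $G_t^{\a,D}$ genuinely changes sign), and so it is what forces the hypothesis $\a>-1/2$; the integral bookkeeping that follows, though lengthy, is routine.
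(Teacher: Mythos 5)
Your proposal is correct and follows essentially the same route as the paper: the heart of the matter is the two-sided estimate $I_{\a}(v)-I_{\a+1}(v)\simeq(1\wedge v^{-1})I_{\a}(v)$ for $\a>-1/2$ (the paper gets it from N{\aa}sell's rational bounds for $I_{\a+1}/I_{\a}$ rather than from asymptotics plus a compactness argument, but both work), after which $K_D^{\a,\s}(x,y)$ for $xy<0$ reduces to the weighted kernel $\int_0^\infty G_t^{\a}\,\min(1,\sinh 2t/(XY))\,t^{\s-1}\,dt$, whose estimates coincide with those of $K^{\a+1,\s+1}$ and yield (B1)--(B2) exactly as you indicate. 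The exact even/odd splitting $K_D^{\a,\s}=\tfrac12 K^{\a,\s}+\tfrac12\,xy\,K^{\a+1,\s}$ you use as an organizing device is a harmless repackaging of the paper's direct analysis of $\Phi_{\a}$ and $\Psi_{\a}=I_{\a}-I_{\a+1}$.
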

For the sake of completeness, we recall that (see \cite[Theorem 2.4]{NoSt2})
\begin{equation} \label{tmf}
{K}_D^{-1/2,\s}(x,y) \simeq \simeq \exp\big(-c|x-y|(|x|+|y|)\big)
	\begin{cases}
		|x-y|^{2\s-1}, & \s< 1/2,\\
		1+\log^+ \frac{1}{|x-y|(|x|+|y|)}, & \s=1/2,\\
		(1+|x+y|)^{1-2\s}, & \s>1/2,
	\end{cases}
\end{equation}
uniformly in $x,y \in \mathbb{R}$. When $xy \ge 0$, this agrees with the estimates of 
Theorem \ref{thm:d_ker} (A) taken with $\a=-1/2$. On the
other hand, in case $xy < 0$ the exponential factor in \eqref{tmf} possesses essentially better
decay if compared with the exponential factor in Theorem \ref{thm:d_ker} (B2).  
In particular, on the line $y=-x$ the right-hand side in \eqref{tmf} has an exponential decay, 
which is not the case of the right-hand side in (B2). 
This reflects a discontinuity in the behavior of $\Phi_{\a}(u)$ as $\a \to (-1/2)^{+}$
(exponential growth/decay when $u \to -\infty$), see Section \ref{ssec:de} below.

When $\a< -1/2$ (the case of a negative multiplicity function), no general sharp estimates in the spirit
of Theorem \ref{thm:d_ker} are possible, because $K_D^{\a,\s}(x,y)$ attains also negative values.
In fact, we have the following result (the proof can be found in Section \ref{ssec:de}).
\begin{propo} \label{prop:neg}
Let $-1 < \a < -1/2$ and $\s > 0$ be fixed. There exists an unbounded set
$\mathcal{D} \subset \{(x,y): xy < 0\} \subset \mathbb{R}^2$ of positive Lebesgue measure such that
$$
{K}_D^{\a,\s}(x,y) < 0, \qquad (x,y) \in \mathcal{D}.
$$
\end{propo}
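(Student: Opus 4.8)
The plan is to exploit a structural decomposition of the Dunkl--Laguerre heat kernel, reduce the desired negativity to a comparison of two Laguerre potential kernels of convolution type with consecutive type parameters, settle that comparison on the anti-diagonal $\{y=-x\}$ by a direct asymptotic analysis of modified Bessel functions, and finally use continuity to pass from a curve of negativity to a set of positive measure. \emph{Reduction.} A direct computation with the closed forms of the kernels --- equivalently, splitting the spectral expansion of $\exp(-tL_{\a}^{D})$ according to the parity of the index and using the definition of $h_n^{\a}$ --- gives the identity $G_t^{\a,D}(x,y)=\frac12 G_t^{\a}(|x|,|y|)+\frac12\,xy\,G_t^{\a+1}(|x|,|y|)$, where $G_t^{\a+1}$ denotes the convolution Laguerre heat kernel \eqref{hkl} with $\a$ replaced by $\a+1$ (note $\a+1>-1$). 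Integrating against $t^{\s-1}/\Gamma(\s)$ yields
$$
K_D^{\a,\s}(x,y)=\tfrac12\,K^{\a,\s}(|x|,|y|)+\tfrac12\,xy\,K^{\a+1,\s}(|x|,|y|),\qquad x,y\in\R .
$$
Since $K^{\a,\s}$ and $K^{\a+1,\s}$ are nonnegative, for $xy<0$ the kernel $K_D^{\a,\s}(x,y)$ is negative precisely when $|xy|\,K^{\a+1,\s}(|x|,|y|)>K^{\a,\s}(|x|,|y|)$, and this is the inequality I would verify on a suitable unbounded set.

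\emph{The Bessel mechanism.} Writing $a=|x|$, $b=|y|$ and $v=ab/\sinh 2t$, one has from \eqref{hkl} and its $(\a+1)$-analogue
$$
ab\,G_t^{\a+1}(a,b)-G_t^{\a}(a,b)=\frac{(ab)^{-\a}}{\sinh 2t}\,\exp\!\Big(-\tfrac12\coth(2t)(a^{2}+b^{2})\Big)\,\big[I_{\a+1}(v)-I_{\a}(v)\big].
$$
The case $-1<\a<-1/2$ is the special one because, although $I_{\a}(v)>I_{\a+1}(v)$ for $v$ near $0^{+}$ by \eqref{asym}, the \emph{two-term} asymptotics of $I_{\nu}$ force $I_{\a+1}(v)-I_{\a}(v)$ to be, for $v$ large, a positive multiple of $v^{-3/2}e^{v}$, the sign being dictated by $2\a+1<0$. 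I would fix $V=V(\a)>0$ so that $I_{\a+1}(v)-I_{\a}(v)\gtrsim v^{-3/2}e^{v}$ for $v\ge V$, while recording the routine bounds $|I_{\a+1}(v)-I_{\a}(v)|\lesssim v^{\a}$ for $0<v\le V$ and $|I_{\a+1}(v)-I_{\a}(v)|\lesssim v^{-3/2}e^{v}$ for all $v\ge1$.

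\emph{The anti-diagonal estimate.} Testing on $x=r$, $y=-r$ (so $a=b=r$) and substituting $v=r^{2}/\sinh 2t$, which turns $r^{2}\coth 2t$ into $\sqrt{v^{2}+r^{4}}$ and $dt$ into $-\tfrac{r^{2}}{2v\sqrt{v^{2}+r^{4}}}\,dv$, one obtains
$$
r^{2}K^{\a+1,\s}(r,r)-K^{\a,\s}(r,r)=\frac{r^{-2\a}}{2\Gamma(\s)}\int_{0}^{\infty}\frac{e^{-\sqrt{v^{2}+r^{4}}}}{\sqrt{v^{2}+r^{4}}}\,\big[I_{\a+1}(v)-I_{\a}(v)\big]\,\tau(v)^{\s-1}\,dv ,
$$
with $\tau(v)=\tfrac12\operatorname{arcsinh}(r^{2}/v)$. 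I would split the integral at $v=V$. On $(V,\infty)$ the integrand is positive; restricting further to $v\in[r^{3},2r^{3}]$, where $\sqrt{v^{2}+r^{4}}\le v+r/2$ and $\tau(v)\simeq 1/r$, the Gaussian factor $e^{-\sqrt{v^{2}+r^{4}}}$ essentially cancels the exponential growth $e^{v}$ of $I_{\a+1}-I_{\a}$, leaving a lower bound of the form $e^{-r/2}$ times a negative power of $r$. On $(0,V)$, the bound $\sqrt{v^{2}+r^{4}}\ge r^{2}$, the monotonicity of $s\mapsto e^{-s}/s$, and the estimates above bound the contribution by $\simeq e^{-r^{2}/2}$. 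Since the positive part decays only like $e^{-r/2}$ while the error decays like $e^{-r^{2}/2}$, the right-hand side is positive for all $r\ge R$, for some $R=R(\a,\s)$, whence $K_D^{\a,\s}(r,-r)<0$ for every $r\ge R$.

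\emph{From a curve to a set of positive measure, and the main difficulty.} The $v$-substitution above represents $K_D^{\a,\s}(x,y)$, for every $xy<0$, as an absolutely convergent integral whose integrand is, on compact subsets of $\{xy<0\}$, dominated by a fixed $v$-integrable function --- here the bound $|I_{\a+1}(v)-I_{\a}(v)|\lesssim v^{-3/2}e^{v}$, $v\ge1$, is what secures integrability near $t=0$ even when $\s\le1/2$. By dominated convergence $K_D^{\a,\s}$ is continuous on the open set $\{xy<0\}$, so $\mathcal D:=\{(x,y):xy<0,\ K_D^{\a,\s}(x,y)<0\}$ is open; by the previous step it contains $(r,-r)$ for all $r\ge R$, hence is unbounded, and being nonempty and open it has positive Lebesgue measure. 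The one genuinely delicate point is the anti-diagonal estimate: one must notice that the ``positive'' part of the integral does \emph{not} inherit the full $e^{-cr^{2}}$ decay of the heat kernel but only an $e^{-cr}$ decay --- on the scale $v\sim r^{3}$ the Gaussian damping nearly annihilates the Bessel growth --- and one must invoke the two-term, rather than the one-term, asymptotics of $I_{\nu}$, since the leading $v^{-1/2}e^{v}$ terms of $I_{\a}$ and $I_{\a+1}$ cancel.
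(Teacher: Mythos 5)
Your argument is correct, and at its core it is the same as the paper's: the negativity is traced to the second\nobreakdash-order term in the large-argument asymptotics of the modified Bessel functions, which for $-1<\a<-1/2$ makes $I_{\a+1}(v)-I_{\a}(v)$ a \emph{positive} multiple of $v^{-3/2}e^{v}$ (this is precisely the paper's \eqref{aspsi} with the opposite sign convention); the negativity is then exhibited on the anti-diagonal $y=-x$ for large $|x|$ and spread to an open, unbounded set by continuity. The packaging differs. The paper keeps the kernel in the form \eqref{hkl} with $\Phi_\a$, bounds $G_t^{\a,D}(x,y)$ from above by $-c\,G_t(|x|,|y|)\sinh(2t)|xy|^{-\a-3/2}$ for $t\le p(1)$ and by a Gaussian for $t>p(1)$, and then reuses the already-proved Lemmas \ref{lem:d_exp_JE}(b) and \ref{lem:E}: on $y=-x$ the negative contribution is only \emph{polynomially} small in $|x|$ while the positive remainder is Gaussian. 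You instead use the exact splitting $G_t^{\a,D}=\tfrac12 G_t^{\a}(|x|,|y|)+\tfrac12\,xy\,G_t^{\a+1}(|x|,|y|)$ (the $\a$-analogue of the identity recorded in the paper's Remark for $\a=-1/2$) and estimate the resulting $v$-integral by hand on the window $v\in[r^3,2r^3]$, obtaining a cruder $e^{-r/2}$ lower bound for the negative part against an $e^{-r^2/2}$ error; this is weaker than the paper's polynomial bound but amply sufficient. One small caveat: your reduction ``$K_D^{\a,\s}(x,y)<0$ precisely when $|xy|\,K^{\a+1,\s}(|x|,|y|)>K^{\a,\s}(|x|,|y|)$'' is ill-posed exactly where you apply it, since for $\s\le 1/2$ both $K^{\a,\s}(r,r)$ and $K^{\a+1,\s}(r,r)$ are infinite (the diagonal singularity $|x-y|^{2\s-1}$ in Theorem \ref{thm:conv_ker}); the comparison must be phrased at the level of the time integrand, where the leading $v^{-1/2}e^{v}$ terms cancel and the integral converges for all $\s>0$ --- which is in fact what your displayed anti-diagonal formula does, so this is a matter of wording rather than a gap.
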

Finally, we establish a sharp description of $L^p-L^q$ boundedness of $I_{D}^{\a,\s}$. It occurs that
$I_{D}^{\a,\s}$ behaves exactly in the same way as $I^{\a,\s}$, see Figure \ref{fig1}. The proof
is based on \eqref{kkp1}, \eqref{kkp2} and Theorem \ref{thm:LpLqlag}. Perhaps a bit unexpectedly, 
Theorem \ref{thm:d_ker} is not needed here.
\begin{thm} \label{thm:lag_D}
Let $\a > -1$, $\s>0$ and $1\le p,q \le \infty$.
\begin{itemize}
\item[(a)] If $\a \ge -1/2$, then $I_D^{\a,\s}$ is bounded from $L^p(dw_{\a})$ to $L^q(dw_{\a})$
if and only if
$$
\frac{1}{p} - \frac{\s}{\a+1} \le \frac{1}q < \frac{1}{p} + \frac{\s}{\a+1} \quad \textrm{and} \quad
\bigg(\frac{1}p,\frac1{q}\bigg) \notin 
	\bigg\{ \Big(\frac{\s}{\a+1},0\Big),\Big(1,1-\frac{\s}{\a+1}\Big)\bigg\}.
$$
\item[(b)] If $\a < -1/2$, then $I_D^{\a,\s}$ is bounded from $L^p(dw_{\a})$ to $L^q(dw_{\a})$
if and only if
$$
\frac{1}p + \frac{\s}{\a} \le \frac{1}q < \frac{1}p + \frac{\s}{\a+1}.
$$
\end{itemize}
\end{thm}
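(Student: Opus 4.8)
\textbf{Proof plan for Theorem \ref{thm:lag_D}.}
The plan is to deduce the $L^p$--$L^q$ boundedness of $I_D^{\a,\s}$ directly from the corresponding result for $I^{\a,\s}$, namely Theorem \ref{thm:LpLqlag}, using only the pointwise comparisons \eqref{kkp1} and \eqref{kkp2} together with the symmetry of the measure $w_{\a}$ under $x\mapsto -x$. The key observation is that, since $dw_{\a}$ is the even extension of $d\mu_{\a}$, every function $f\in L^p(dw_{\a})$ splits into its even and odd parts, and each of these is (up to the obvious identification) an element of $L^p(d\mu_{\a})$ with comparable norm. Correspondingly, one should decompose the kernel's domain into the four quadrants $\{xy\ge 0\}$ and $\{xy<0\}$, each of which, after reflecting one or both variables, is an integral over $\mathbb R_+\times\mathbb R_+$ against $d\mu_{\a}\otimes d\mu_{\a}$.

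First I would prove sufficiency. Assume the displayed conditions on $(1/p,1/q)$ hold. Given $f\in L^p(dw_{\a})$, write $g(y)=|f(y)|$ and estimate, using \eqref{kkp1},
$$
|I_D^{\a,\s}f(x)| \le \int_{-\infty}^{\infty} K^{\a,\s}(|x|,|y|)\, g(y)\, dw_{\a}(y)
= \int_0^{\infty} K^{\a,\s}(|x|,y)\,\big(g(y)+g(-y)\big)\, d\mu_{\a}(y),
$$
so that $|I_D^{\a,\s}f(x)| \le I^{\a,\s}\widetilde g(|x|)$ where $\widetilde g(y)=g(y)+g(-y)$ for $y>0$. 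Since $\|\widetilde g\|_{L^p(d\mu_{\a})}\lesssim \|f\|_{L^p(dw_{\a})}$ and, by evenness, $\|I^{\a,\s}\widetilde g(|\cdot|)\|_{L^q(dw_{\a})}\simeq \|I^{\a,\s}\widetilde g\|_{L^q(d\mu_{\a})}$, the boundedness of $I^{\a,\s}$ from $L^p(d\mu_{\a})$ to $L^q(d\mu_{\a})$ granted by Theorem \ref{thm:LpLqlag} (in case (a) the parameter region is identical; in case (b) it is $\frac1p+\frac{\s}{\a}\le\frac1q<\frac1p+\frac{\s}{\a+1}$) yields the desired bound for $I_D^{\a,\s}$.

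Next I would prove necessity, which is where the argument needs the lower bound \eqref{kkp2} rather than just the upper bound. Suppose $I_D^{\a,\s}$ is bounded from $L^p(dw_{\a})$ to $L^q(dw_{\a})$. Testing on functions $f$ supported in $\mathbb R_+$ and even-extending, or simply restricting attention to $x,y>0$ and using \eqref{kkp2}, one gets that the operator with kernel $K^{\a,\s}(x,y)$ on $\mathbb R_+$ against $d\mu_{\a}$ dominates (up to a constant) the restriction of $I_D^{\a,\s}$ to the first quadrant; more precisely, for $f\ge 0$ supported on $(0,\infty)$ one has $I_D^{\a,\s}f(x)\gtrsim I^{\a,\s}f(x)$ for $x>0$, and taking $L^q$ norms over $x>0$ against $dw_{\a}$ shows $I^{\a,\s}$ is bounded $L^p(d\mu_{\a})\to L^q(d\mu_{\a})$. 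By Theorem \ref{thm:LpLqlag} this forces the stated inequalities among $1/p$ and $1/q$, including (in case (a)) the exclusion of the two endpoint pairs. It remains to check that in case (a) the two excluded points $(\frac{\s}{\a+1},0)$ and $(1,1-\frac{\s}{\a+1})$ are genuinely excluded for $I_D^{\a,\s}$ as well; but this is immediate from the previous sentence, since boundedness of $I_D^{\a,\s}$ at such a point would force boundedness of $I^{\a,\s}$ there, contradicting Theorem \ref{thm:LpLqlag}(a).

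The main obstacle, and the only point requiring genuine care, is the bookkeeping of the cross quadrants $\{xy<0\}$ in the necessity direction: a priori a cancellation between the $\{xy\ge0\}$ and $\{xy<0\}$ contributions could help $I_D^{\a,\s}$ be bounded even if the positive-quadrant piece is not. This worry is dispelled by testing only on one-signed functions (say, supported on $(0,\infty)$), for which \eqref{kkp2} gives $I_D^{\a,\s}f(x)\gtrsim I^{\a,\s}f(x)>0$ on $x>0$ with no cross terms entering; no cancellation can occur, so the positive-quadrant necessity is clean. This explains the remark in the text that Theorem \ref{thm:d_ker} is not needed here: the two-sided comparisons \eqref{kkp1}--\eqref{kkp2} with the already-established Theorem \ref{thm:LpLqlag} suffice, and in particular the necessity does not require any information about $K_D^{\a,\s}$ on $\{xy<0\}$ (where, by Proposition \ref{prop:neg}, no pointwise lower bound of the usual form is even available when $\a<-1/2$).
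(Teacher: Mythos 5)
Your proposal is correct and follows essentially the same route as the paper: sufficiency via the upper bound \eqref{kkp1} reducing to Theorem \ref{thm:LpLqlag} (the paper packages this as four operators $\mathcal I^{\a,\s}_\pm(f_\pm)$ rather than one even-ized majorant, a cosmetic difference), and necessity by testing on functions supported in $\mathbb R_+$ and invoking the two-sided comparison \eqref{kkp2} on the positive quadrant. Your closing remark about why no information on $\{xy<0\}$ is needed matches the paper's observation that Theorem \ref{thm:d_ker} plays no role here.
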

Note that the result in the Hermite case $\a=-1/2$ was known earlier, see \cite{NoSt2} and references
given there. Essentially, also the sufficiency part of Theorem \ref{thm:lag_D} (a) was known before, even in
the multi-dimensional setting, see \cite[Theorem 6.1]{NoSt1}. The rest of the theorem is new.

\begin{rem}
The estimates of Theorem \ref{thm:conv_ker} specified to $\a=\pm 1/2$ and the estimates \eqref{tmf} for the
harmonic oscillator potential kernel are consistent in the following way.
Theorem \ref{thm:conv_ker} implies \eqref{tmf} for $xy>0$, as can be verified by means of the well-known
relation
$$
2 K_{D}^{-1/2,\s}(x,y) = K^{-1/2,\s}(x,y)+xyK^{1/2,\s}(x,y), \qquad x,y > 0.
$$
On the other hand, \eqref{tmf} implies the bounds of Theorem \ref{thm:conv_ker} specified to $\a=-1/2$,
since
$$
K^{-1/2,\s}(x,y) = K_D^{-1/2,\s}(x,y) + K_D^{-1/2,\s}(-x,y), \qquad x,y > 0.
$$
\end{rem}

\section{Estimates of the potential kernels} \label{sec:pot}

In this section we prove Theorems \ref{thm:conv_ker} and \ref{thm:d_ker}, and also Proposition \ref{prop:neg}.
We begin with two auxiliary technical results that provide sharp description of the behavior of the integrals
$J_A(T,S)$ and $E_A(T,S)$ defined below. These are essentially \cite[Lemmas 2.1-2.3]{NoSt2}. 
Here we give slightly more general statements, 
nevertheless their proofs are almost the same as those in \cite{NoSt2}.

Let
\begin{align*}
J_A(T,S) & := \int_T^S t^A \exp(-t)\, dt, \qquad 0 \le T \le S \le\infty, \quad T  < \infty,\\
E_A(T,S) & := \int_0^1 t^A \exp\big(-Tt^{-1}-St\big)\, dt, \qquad 0\le T, S <\infty. 
\end{align*}

\begin{lem}[{\cite[Lemma 2.1, Lemma 2.2]{NoSt2}}] \label{lem:J}
Let $A \in \mathbb{R}$, $\beta>1$ and $\gamma > 0$ be fixed.
The following estimates hold uniformly in $0\le T\le S \le \infty$, $T< \infty$.
\begin{itemize}
\item[(a)] If $S \le \beta T$, then
$$
J_A(T,S) \simeq\simeq T^A (S-T)\exp(-cT).
$$
\item[(b)] If $S>\beta T$ and $T \ge \gamma$, then
$$
J_A(T,S) \simeq T^A \exp(-T).
$$
\item[(c)] If $S>\beta T$ and $S \ge \beta \gamma$ and $T<\gamma$, then
$$
J_A(T,S) \simeq \begin{cases}
									T^{A+1}, & A< -1,\\
									1+\log^{+}(1/T), & A=-1,\\
									1, & A>-1.
								\end{cases}
$$
\item[(d)] If $S>\beta T$ and $S<\beta \gamma$, then
$$
J_A(T,S) \simeq \begin{cases}
									T^{A+1}, & A < -1,\\
									\log(S/T), & A=-1,\\
									S^{A+1}, & A> -1.
								\end{cases}
$$
\end{itemize}
\end{lem}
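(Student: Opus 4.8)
The plan is to prove Lemma \ref{lem:J} by elementary estimates on the incomplete Gamma-type integral $J_A(T,S)=\int_T^S t^A e^{-t}\,dt$, splitting into the four regimes according to the position of $S$ relative to $\beta T$ and of $T$ relative to the fixed threshold $\gamma$. The guiding principle is that on a short interval $[T,S]$ the integrand is essentially constant, while on a long interval it is governed either by exponential decay (when $T$ is bounded away from $0$) or by the polynomial factor $t^A$ near $t=0$.

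First I would treat case (a), $S\le\beta T$. Here $T\le t\le S\le\beta T$, so $t\simeq T$ and hence $t^A\simeq T^A$ (the implied constants depending on $A$ and $\beta$), while $e^{-t}\simeq\simeq e^{-cT}$ in the sense of the ``$\simeq\simeq$'' relation, since $e^{-\beta T}\le e^{-t}\le e^{-T}$. Pulling these factors out of the integral and integrating $dt$ over $[T,S]$ of length $S-T$ gives $J_A(T,S)\simeq\simeq T^A(S-T)e^{-cT}$. Next, for case (b), $S>\beta T$ and $T\ge\gamma$: the lower limit forces $t\ge\gamma$, so the whole integral enjoys genuine exponential decay. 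The lower bound comes from restricting integration to $[T,2T]$ (or $[T,\min(2T,S)]$, using $S>\beta T>T$), where the integrand is $\simeq\simeq T^A e^{-cT}$; since $T\ge\gamma$ this is $\simeq T^A e^{-T}$ up to constants absorbing the bounded correction. The upper bound follows by comparing with $\int_T^\infty t^A e^{-t}\,dt$ and a standard tail estimate (e.g.\ integration by parts, or the bound $\int_T^\infty t^A e^{-t}\,dt\lesssim T^A e^{-T}$ valid for $T\ge\gamma$), again with constants depending on $A$ and $\gamma$.

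For cases (c) and (d), where $T<\gamma$, I would split $J_A(T,S)=\int_T^{\min(S,1)}+\int_{\min(S,1)}^{S}$, the second piece being present only when $S>1$. On the tail piece $[1,S]$ (case (c), where additionally $S\ge\beta\gamma$ guarantees this interval is nonempty and of length $\simeq1$ at least), the integrand is bounded above and below away from $0$ on $[1,2]$ and has exponential decay beyond, so that piece contributes a quantity $\simeq1$. On the near-origin piece $[T,\min(S,1)]$ the factor $e^{-t}\simeq1$, so up to constants we are estimating $\int_T^{\min(S,1)}t^A\,dt$, which is the elementary computation producing $T^{A+1}$ when $A<-1$, $\log$ of the ratio of endpoints when $A=-1$, and the value at the upper endpoint when $A>-1$. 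Combining: in case (c) the upper endpoint is $1$, giving the three alternatives $T^{A+1}$, $1+\log^+(1/T)$, $1$, and adding the $O(1)$ tail contribution does not change these (note $T^{A+1}\gtrsim1$ when $A<-1$ and $T<\gamma$, assuming $\gamma\le1$, or otherwise the constant absorbs it). In case (d), $S<\beta\gamma$ so there is no separate tail beyond $1$ to worry about except a bounded-length piece on which the integrand is $\simeq1$; the dominant behavior is $\int_T^{S}t^A\,dt\simeq T^{A+1}$, $\log(S/T)$, or $S^{A+1}$ according to the sign of $A+1$, which is exactly the claimed estimate after checking that the case $S>1$ (possible if $\beta\gamma>1$) only adds a controlled $O(1)$ that is dominated by $S^{A+1}$ when $A>-1$ and by $T^{A+1}$ when $A<-1$.

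The one place requiring genuine care, rather than routine bookkeeping, is the \emph{uniformity} of the implied constants and the consistent handling of the boundary overlaps between the four regimes — in particular verifying that when several defining inequalities become equalities (e.g.\ $S=\beta T$, or $T=\gamma$, or $S=\beta\gamma$) the estimates from adjacent cases match up to constants, so that the four-case description really covers all of $0\le T\le S\le\infty$, $T<\infty$, with a single set of constants depending only on $A$, $\beta$, $\gamma$. Here one also has to be a little attentive to the endpoint conventions and to the ``$\simeq\simeq$'' bookkeeping in (a) and (b), and to the fact that in (c)--(d) the interval $[\min(S,1),\max(S,1)]$ may or may not be present; but all of this is a matter of organizing the case split cleanly, and since the statement is essentially \cite[Lemmas 2.1, 2.2]{NoSt2}, the argument there transfers verbatim after trivial adjustments to the slightly more general formulation.
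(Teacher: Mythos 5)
Your overall strategy --- pointwise comparison on a short interval for (a), tail estimates for (b), and a split at a fixed threshold with $e^{-t}\simeq 1$ near the origin for (c)--(d) --- is the natural elementary route; note that the paper itself gives no proof of this lemma but defers to \cite[Lemmas 2.1, 2.2]{NoSt2}, so a self-contained argument like yours is the right thing to supply. Cases (a), (c) and (d) are fine modulo the bookkeeping you acknowledge (in particular, in (c)--(d) one must check, as you do, that the $O(1)$ contribution of the tail is dominated by $T^{A+1}$ when $A<-1$, since $T^{A+1}\ge\gamma^{A+1}>0$, and is absorbed into the answer otherwise).

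There is, however, a genuine flaw in your lower bound for case (b). You restrict integration to $[T,\min(2T,S)]$ and bound the integrand below pointwise; on that interval the best pointwise lower bound is of order $T^Ae^{-2T}$, so integrating over an interval of length $\lesssim T$ yields only $J_A(T,S)\gtrsim T^{A+1}e^{-2T}$. This cannot be upgraded to $T^Ae^{-T}$ ``up to constants absorbing the bounded correction'': the ratio $e^{-2T}/e^{-T}=e^{-T}$ tends to $0$ as $T\to\infty$, and part (b) asserts a genuine $\simeq$ with the exact exponential $e^{-T}$, not a ``$\simeq\simeq$'' with an unspecified constant in the exponent; for $T\ge\gamma$ unbounded these are not interchangeable. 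The fix is to integrate over an interval of \emph{bounded} length anchored at $T$: set $\delta:=\min\bigl(1,(\beta-1)\gamma\bigr)$, so that $[T,T+\delta]\subset[T,\beta T]\subset[T,S]$ because $\delta\le(\beta-1)T$; on $[T,T+\delta]$ one has $t^A\simeq T^A$ (since $1\le t/T\le 1+1/\gamma$) and $e^{-t}\ge e^{-1}e^{-T}$, whence $J_A(T,S)\ge c(A,\beta,\gamma)\,T^Ae^{-T}$. Equivalently, substitute $t=T+u$ and note that $\int_0^{(\beta-1)T}(1+u/T)^Ae^{-u}\,du$ is bounded above and below by positive constants uniformly in $T\ge\gamma$. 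With this correction case (b) is complete; your upper bound via $\int_T^\infty t^Ae^{-t}\,dt\lesssim T^Ae^{-T}$ for $T\ge\gamma$ is correct as stated.
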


\begin{lem}[{\cite[Lemma 2.3]{NoSt2}}] \label{lem:E}
Let $A \in \mathbb{R}$ and $\gamma > 0$ be fixed. Then
$$
E_A(T,S) \simeq \simeq \exp\Big( -c\sqrt{T(T\vee S)}\Big)
\begin{cases}
T^{A+1}, & A< -1,\\
1+\log^+ \frac{1}{T(T\vee S)}, & A = -1,\\
(S\vee \gamma)^{-A-1}, & A>-1,
\end{cases}
$$
uniformly in $T,S \ge 0$.
\end{lem}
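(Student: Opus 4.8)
The plan is to reduce $E_A(T,S)$ to the known estimates for $J_A$ by the standard substitution that diagonalizes the exponent, then read off the cases from Lemma \ref{lem:J}. First I would treat the trivial regime: if both $T$ and $S$ are bounded (say $T,S \le \gamma$, or more precisely $T(T\vee S)$ bounded), the exponential factor $\exp(-Tt^{-1}-St)$ is comparable to $1$ uniformly for $t\in(0,1)$ after discarding a harmless piece near $t=0$, so $E_A(T,S)\simeq \int_0^1 t^A\,dt$ with the obvious splitting into $A<-1$ (divergent, i.e.\ the $(0^+)^{A+1}=\infty$ convention — but here one must be slightly careful: for $A<-1$ the integral near $0$ is killed by $\exp(-Tt^{-1})$ as soon as $T>0$, so the genuine behaviour is $T^{A+1}$), $A=-1$ (logarithmic), and $A>-1$ (bounded). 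The point is that the factor $\exp(-Tt^{-1})$ regularizes the singularity at $t=0$ exactly at the scale $t\simeq T$, which is the source of the $T^{A+1}$ and $1+\log^+\frac1{T(T\vee S)}$ terms.

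Next, for the main range I would substitute $t = \sqrt{T/S}\,s$ (when $S>0$), so that $Tt^{-1}+St = \sqrt{TS}(s^{-1}+s)$, turning the exponent into something governed by the single parameter $\sqrt{TS}$; alternatively, and more in line with the reduction to $J_A$, substitute $u = T/t$ to get
$$
E_A(T,S) = T^{A+1}\int_T^{\infty} u^{-A-2} \exp\big(-u - (ST)u^{-1}\big)\, du,
$$
and then, depending on whether $ST \lesssim 1$ or $ST \gtrsim 1$, either drop the $u^{-1}$ term in the exponent (when $ST\lesssim1$ and $u$ ranges over $(T,\infty)$, noting $(ST)u^{-1}\le S \le$ bounded only in part of the range — so one splits at $u\simeq\sqrt{ST}$) or use that on $u\in(T,\sqrt{ST})$ the dominant exponential term is $(ST)u^{-1}$ and on $u\in(\sqrt{ST},\infty)$ it is $u$. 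On each piece the integral is of the form handled by Lemma \ref{lem:J} (after a further monomial substitution), and assembling the pieces produces the exponential factor $\exp(-c\sqrt{T(T\vee S)})$ together with the stated polynomial/logarithmic behaviour, with $(S\vee\gamma)^{-A-1}$ appearing in the case $A>-1$ because then the integral $\int u^{-A-2}\,du$ over the upper piece converges and is governed by its lower endpoint $u\simeq \sqrt{ST}$ (or by $u\simeq 1$ when $S$ is small — hence the $S\vee\gamma$). The cases $A<-1$ and $A=-1$ instead have the integral dominated near the lower endpoint $u\simeq T$, giving $T^{A+1}$ and the logarithm.

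The main obstacle, and the place where genuine care is needed, is the bookkeeping of the exponential factors across the two or three sub-intervals so that the upper and lower bounds in the ``$\simeq\simeq$'' relation glue into the single clean factor $\exp(-c\sqrt{T(T\vee S)})$: on the region where $S\le T$ this should read $\exp(-cT)$, on $S>T$ it should read $\exp(-c\sqrt{ST})$, and one must check that the contributions of the discarded pieces are dominated (not merely comparable) by the retained main term — e.g.\ that the part of the integral over $u>\sqrt{ST}$, which carries $\exp(-u)$, does not beat the main piece, and symmetrically near $u=T$. Since this is essentially \cite[Lemma 2.3]{NoSt2} with $A$ now allowed to be an arbitrary real rather than restricted, I expect the argument there to go through verbatim; the only new input is checking that nothing in the splitting used the old restriction on $A$, which it does not — the three regimes $A<-1$, $A=-1$, $A>-1$ are precisely what the proof already distinguishes.
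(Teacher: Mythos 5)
Your plan is sound and, as far as one can compare, matches the paper's intent: the paper gives no proof of Lemma \ref{lem:E} at all, merely stating that it is essentially \cite[Lemma 2.3]{NoSt2} and that the argument there carries over, with only the extended range of $A$ to check. Your reduction of $E_A$ to $J_A$-type integrals via $u=T/t$, the splitting at $u\simeq\sqrt{ST}$ according to which term of the exponent dominates, and your identification of the gluing of the exponential factors into the single factor $\exp(-c\sqrt{T(T\vee S)})$ as the only delicate point are all correct, and the three regimes $A<-1$, $A=-1$, $A>-1$ come out exactly as you describe (with the polynomial discrepancies absorbed into the flexible constants of the ``$\simeq\simeq$'' relation).
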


\subsection{Estimates of the Laguerre potential kernels}
Proving Theorem \ref{thm:conv_ker} requires some further preparation. 
The plan is to estimate $K^{\a,\s}(x,y)$ first in terms of $J_A(T,S)$ and $E_{A}(T,S)$,
and then to apply Lemmas \ref{lem:J} and \ref{lem:E}.
Let
\begin{align*}
G_t(x,y) & = 
	\frac{1}{(2\pi \sinh 2t)^{1/2}} \exp\bigg( -\frac{1}4 \big[ \tanh(t)\, 
		(x+y)^2 + \coth(t)\,(x-y)^2\big]\bigg) \\
& = 	\frac{1}{(2\pi \sinh 2t)^{1/2}} 
	\exp\bigg( -\frac{1}2 \coth(2t) \,\big(x^2+y^2\big) + \frac{xy}{\sinh 2t}\bigg)
\end{align*}
be the heat kernel associated with one-dimensional Hermite function expansions;
note that $G_t(x,y)=G_t^{-1/2,D}(x,y)$.
The behavior of $G_t^{\a}(x,y)$ can be described in a sharp way in terms of $G_t(x,y)$.
\begin{lem} \label{lem:her_lag}
Let $\a>-1$. Then
$$
G_t^{\a}(x,y) \simeq (xy \vee \sinh 2t)^{-\a-1/2} G_t(x,y)
$$
uniformly in $x,y>0$ and $t>0$.
\end{lem}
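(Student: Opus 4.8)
The plan is to reduce the claimed comparison to a purely one-variable estimate for the modified Bessel function $I_{\a}$. First I would write out the quotient $G_t^{\a}(x,y)/G_t(x,y)$ using \eqref{hkl} and the second displayed expression for $G_t(x,y)$. The Gaussian factor $\exp\big(-\tfrac12\coth(2t)(x^2+y^2)\big)$ occurs in both kernels and cancels, the ratio of the remaining prefactors $\tfrac{1}{\sinh 2t}$ and $\tfrac{1}{(2\pi\sinh 2t)^{1/2}}$ equals $(2\pi)^{1/2}(\sinh 2t)^{-1/2}$, and writing $(xy)^{-\a}=u^{-\a}(\sinh 2t)^{-\a}$ with $u:=xy/\sinh 2t$ absorbs the last power of $xy$. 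This gives
$$
\frac{G_t^{\a}(x,y)}{G_t(x,y)} = (2\pi)^{1/2}(\sinh 2t)^{-\a-1/2}\, u^{-\a} I_{\a}(u)\, e^{-u}, \qquad u = \frac{xy}{\sinh 2t},
$$
so everything comes down to controlling $\Psi_{\a}(u):=u^{-\a}I_{\a}(u)e^{-u}$ on $(0,\infty)$.

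The key step is the elementary estimate
$$
\Psi_{\a}(u) \simeq (1+u)^{-\a-1/2}, \qquad u>0.
$$
This follows from \eqref{asym}: as $u\to0^{+}$ one has $u^{-\a}I_{\a}(u)\simeq1$ and $e^{-u}\simeq1$, so $\Psi_{\a}(u)\simeq1\simeq(1+u)^{-\a-1/2}$; as $u\to\infty$ one has $I_{\a}(u)\simeq u^{-1/2}e^{u}$, hence $\Psi_{\a}(u)\simeq u^{-\a-1/2}\simeq(1+u)^{-\a-1/2}$; and on any compact subinterval of $(0,\infty)$ both $\Psi_{\a}$ and $(1+u)^{-\a-1/2}$ are continuous and strictly positive (recall $I_{\a}(u)>0$ for $u>0$), so their ratio is bounded above and below there. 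Patching these three regimes yields the displayed comparison with constants depending only on $\a$.

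Finally I would reassemble. Inserting the estimate for $\Psi_{\a}$ back into the formula for the quotient,
$$
\frac{G_t^{\a}(x,y)}{G_t(x,y)} \simeq (\sinh 2t)^{-\a-1/2}\Big(1+\frac{xy}{\sinh 2t}\Big)^{-\a-1/2}
= \big(\sinh 2t + xy\big)^{-\a-1/2},
$$
and since $\sinh 2t + xy \simeq (xy)\vee(\sinh 2t)$ for all $x,y>0$, $t>0$, the asserted estimate follows, with all constants uniform in $x,y,t$. I do not anticipate any serious obstacle here: the only point requiring any care is the Bessel estimate for $\Psi_{\a}$, and that is entirely routine once \eqref{asym} together with the continuity and strict positivity of $I_{\a}$ on $(0,\infty)$ are invoked.
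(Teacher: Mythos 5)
Your argument is correct and is precisely the ``elementary exercise based on the asymptotics \eqref{asym}'' that the paper leaves to the reader: cancel the common Gaussian factor, reduce to the one-variable estimate $u^{-\a}I_{\a}(u)e^{-u}\simeq(1+u)^{-\a-1/2}$ (which follows from \eqref{asym} together with continuity and strict positivity of $I_{\a}$ on compacts), and conclude via $\sinh 2t+xy\simeq xy\vee\sinh 2t$. The only cosmetic caveat is that the symbol $\Psi_{\a}$ is already reserved in Section \ref{ssec:de} for $I_{\a}(u)-I_{\a+1}(u)$, so your auxiliary function should be given a different name.
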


\begin{proof}
Elementary exercise based on the asymptotics \eqref{asym}.
\end{proof}

\begin{lem} \label{lem:exp_JE}
Let $\a>-1$. The following estimates hold uniformly in $x,y>0$.
\begin{itemize}
\item[(a)] If $xy \le 1$, then
\begin{align*}
K^{\a,\s}(x,y) \simeq \simeq &\, \exp\big(-c(x+y)^2\big) + (x+y)^{2\s-2\a-2} J_{\a-\s}\bigg(
	c_1(x+y)^2, c_2 \frac{(x+y)^2}{xy} \bigg) \\
	&\,+ (xy)^{\s-\a-1} E_{\s-3/2}\bigg( c\frac{(x-y)^2}{xy}, c xy(x+y)^2 \bigg),
\end{align*}
where $c_1 < c_2$ are positive constants, independent of $x$ and $y$, that may be different in the lower
and upper estimate.
\item[(b)] If $xy>1$, then
$$
K^{\a,\s}(x,y) \simeq \simeq \exp\big(-c(x+y)^2\big) + (xy)^{-\a-1/2} E_{\s-3/2}\big(
	c(x-y)^2,c(x+y)^2\big).
$$
\end{itemize}
\end{lem}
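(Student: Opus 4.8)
The plan is to use Lemma~\ref{lem:her_lag} to replace $G_t^{\a}$ by an elementary expression, split the $t$-integration into $t>1$ and $t<1$, and reduce each piece to one of the model integrals $J_A$, $E_A$ by simple substitutions.

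First, by Lemma~\ref{lem:her_lag} and the displayed formula for $G_t(x,y)$,
$$
K^{\a,\s}(x,y) \simeq \int_0^{\infty} (xy\vee\sinh 2t)^{-\a-1/2}(\sinh 2t)^{-1/2}
	\exp\Big(-\tfrac14\tanh(t)(x+y)^2-\tfrac14\coth(t)(x-y)^2\Big)\,t^{\s-1}\,dt,
$$
and I would split this as $I_{>1}+I_{<1}$, with $I_{>1}=\int_1^\infty$ and $I_{<1}=\int_0^1$. On $(1,\infty)$ one has $\sinh 2t\simeq e^{2t}$ and $\tanh t\simeq\coth t\simeq1$, so $I_{>1}\simeq\simeq\exp(-c(x+y)^2)\int_1^\infty(xy\vee e^{2t})^{-\a-1/2}e^{-t}t^{\s-1}\,dt$. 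If $xy\le1$ the last integral is a positive constant (this is where $\a+1>0$ enters), which yields the summand $\exp(-c(x+y)^2)$; if $xy>1$ one splits the $t$-integral once more at $e^{2t}=xy$ and, using $(x+y)^2\ge4xy$, checks $I_{>1}\lesssim(xy)^{-\a-1/2}\exp(-c(x+y)^2)$, a quantity dominated by the bound asserted in (b). In either case the matching lower bound $K^{\a,\s}\gtrsim\exp(-c(x+y)^2)$ follows by restricting the $t$-integral to a fixed compact subinterval of $(1,\infty)$.

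On $(0,1)$ one uses $\sinh 2t\simeq t$, $\tanh t\simeq t$, $\coth t\simeq1/t$, giving
$$
I_{<1}\simeq\simeq\int_0^1(xy\vee t)^{-\a-1/2}t^{\s-3/2}\exp\big(-ct(x+y)^2-c(x-y)^2/t\big)\,dt.
$$
When $xy>1$, so that $xy\vee t=xy$ on all of $(0,1)$, this is exactly $(xy)^{-\a-1/2}E_{\s-3/2}(c(x-y)^2,c(x+y)^2)$ by the definition of $E_A$; together with the bound for $I_{>1}$ this proves (b). When $xy\le1$ I would split $I_{<1}$ at $t=xy$. On $(0,xy)$ one has $xy\vee t=xy$, and $t=xy\,s$ turns $\int_0^{xy}$ into $(xy)^{\s-\a-1}E_{\s-3/2}\big(c(x-y)^2/(xy),\,cxy(x+y)^2\big)$, the third summand of (a). On $(xy,1)$ one has $xy\vee t=t$, so the contribution is $\int_{xy}^1 t^{\s-\a-2}\exp(-ct(x+y)^2-c(x-y)^2/t)\,dt$; here I would use that $0\le(x+y)^2/t-(x-y)^2/t=4xy/t\le4$ for $t\ge xy$, whence $\exp(-c(x-y)^2/t)\simeq\exp(-c(x+y)^2/t)$, together with $t+1/t\simeq1/t$ on $(0,1)$, to conclude $\int_{xy}^1\simeq\simeq\int_{xy}^1 t^{\s-\a-2}\exp(-c(x+y)^2/t)\,dt$; the substitution $w=1/t$ followed by a rescaling identifies the latter, up to a positive constant factor, with $(x+y)^{2\s-2\a-2}J_{\a-\s}\big(c_1(x+y)^2,\,c_2(x+y)^2/(xy)\big)$, the second summand of (a). Since all summands are nonnegative, collecting the pieces gives (a); Lemmas~\ref{lem:J} and~\ref{lem:E} are invoked only afterwards, when passing from this lemma to Theorem~\ref{thm:conv_ker}.

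The one genuinely delicate step is the treatment of $\int_{xy}^1$ in case (a): trading the two-sided exponential $\exp(-ct(x+y)^2-c(x-y)^2/t)$ for the one-sided $\exp(-c(x+y)^2/t)$ carried by $J_{\a-\s}$. This rests on the two elementary comparisons above, but one must keep the resulting constants compatible with the convention that the lower and upper members of the ``$\simeq\simeq$'' relation may carry different constants $c_1<c_2$ inside $J_{\a-\s}$ and $E_{\s-3/2}$; the slack needed is of the form $\exp(-c(x+y)^2)$ and $\exp(-c(x+y)^2/(xy))$, precisely what is available. The remaining verifications — that the $I_{>1}$ contribution and the endpoint effects at $t=1$ are absorbed into the three named summands — are routine bookkeeping based on $(x+y)^2\ge4xy$ and the monotonicity of $J_A$ and $E_A$ in their arguments.
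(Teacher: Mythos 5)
Your proposal is correct and follows essentially the same route as the paper: replace $G_t^{\a}$ via Lemma~\ref{lem:her_lag}, split the $t$-integral according to where $\sinh 2t$ overtakes $xy$ (your cut points $t=1$, $t=xy$, $t=\tfrac12\log xy$ are, up to constants, the paper's $p(xy)$ and a fixed $\mathcal{C}$), and identify the three pieces with the summands $\exp(-c(x+y)^2)$, $J_{\a-\s}$ and $E_{\s-3/2}$ by the same substitutions. The only cosmetic difference is that on the middle range you trade $(x-y)^2/t$ for $(x+y)^2/t$ by hand, where the paper instead invokes the second closed form of $G_t$ and the boundedness of $\exp(xy/\sinh 2t)$ there; both yield the same $J$-term with admissible constants.
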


\begin{proof}
In view of Lemma \ref{lem:her_lag}, 
$$
K^{\a,\s}(x,y) \simeq \int_0^{\infty} (xy \vee \sinh 2t)^{-\a-1/2} G_t(x,y) t^{\s-1}\, dt.
$$
To proceed, we get rid of the maximum above by splitting the integral according to the point
$p(xy)$, where the function $p(r)$ is defined by the identity $\sinh 2p(r) = r$, $r >0$. 
Notice that
$$
p(xy) \simeq \begin{cases}
								xy, & xy \le 1,\\
								\log 2xy, & xy > 1.
							\end{cases}
$$
Taking into account the explicit form of $G_t(x,y)$, we can write
\begin{align*}
& K^{\a,\s}(x,y) \\
&  \simeq   (xy)^{-\a-1/2}\int_0^{p(xy)} (\sinh 2t)^{-1/2} t^{\s-1}
	\exp\Big( -\frac{1}{4} \big[ \tanh(t)\, (x+y)^2 + \coth(t)\,(x-y)^2\big] \Big) \, dt\\
	& \qquad + \int_{p(xy)}^{\infty}(\sinh 2t)^{-\a-1} t^{\s-1} \exp\Big( -\frac{1}2 \coth(2t)\, 
		\big(x^2+y^2\big) \Big) \, dt \\
	& \equiv \mathcal{I}_0 + \mathcal{I}_{\infty}.
\end{align*}

We first prove (a). To this end assume that $xy \le 1$. Since $p(xy) \lesssim xy$, we have
\begin{equation} \label{bt}
\mathcal{I}_0 \simeq\simeq (xy)^{-\a-1/2} \int_0^{p(xy)} t^{\s-3/2} \exp\Big( -c \big[ t^{-1}(x-y)^2
	+ t(x+y)^2 \big] \Big) \, dt.
\end{equation}
Changing the variable of integration, we get
$$
\mathcal{I}_0 \simeq \simeq (xy)^{-\a-1/2} p(xy)^{\s-1/2} \int_0^1 s^{\s-3/2}
	\exp\bigg( -c\bigg[ s^{-1} \frac{(x-y)^2}{p(xy)} + s p(xy) (x+y)^2 \bigg]\bigg)\, ds.
$$
Since $p(xy)\simeq xy$, we conclude that 
$$
\mathcal{I}_0 \simeq\simeq (xy)^{\s-\a-1} E_{\s-3/2}\bigg( c\frac{(x-y)^2}{xy},cxy(x+y)^2\bigg).
$$
Next, we estimate $\mathcal{I}_{\infty}$. We split this integral according to $\mathcal{C}$ satisfying
$2 p(xy) \le \mathcal{C}xy$, $xy \le 1$, obtaining
\begin{align*}
\mathcal{I}_{\infty} & \simeq \simeq \int_{p(xy)}^{\mathcal{C}} t^{\s-\a-2} \exp\Big(-c\frac{1}t
	\big(x^2+y^2\big)\Big)\, dt 
 + \exp\big(-c(x^2+y^2)\big) \int_{\mathcal{C}}^{\infty} t^{\s-1} e^{-2(\a+1)t}\, dt \\
& \equiv \mathcal{I}_{\infty,1} + \mathcal{I}_{\infty,2}.
\end{align*}
Treatment of $\mathcal{I}_{\infty,2}$ is obvious since the integral over $(\mathcal{C},\infty)$ is a finite
constant. To deal with $\mathcal{I}_{\infty,1}$ we change the variable of integration and find that
$$
\mathcal{I}_{\infty,1} \simeq \big(x^2+y^2\big)^{\s-\a-1} 
	\int_{c\frac{x^2+y^2}{\mathcal{C}}}^{c\frac{x^2+y^2}{p(xy)}} s^{\a-\s}e^{-s}\, ds.
$$
Since $p(xy) \simeq xy$ and $x^2+y^2 \simeq (x+y)^2$, we infer that
$$
\mathcal{I}_{\infty} \simeq\simeq (x+y)^{2\s-2\a-2} J_{\a-\s}\bigg( c_1(x+y)^2,c_2 \frac{(x+y)^2}{xy}\bigg)
	+ \exp\big(-c(x+y)^2\big),
$$
where $c_1 < c_2$ are positive constants that may differ in the lower and upper estimate.
Item (a) follows.

To prove (b), assume that $xy>1$. Consider first $\mathcal{I}_{\infty}$. Since $p(xy) \gtrsim \log 2xy$, we
can write
$$
\mathcal{I}_{\infty} \simeq\simeq \exp\big( -c(x^2+y^2)\big) \int_{p(xy)}^{\infty} t^{\s-1}e^{-2(\a+1)t}\,dt.
$$
But $t^{\s-1}e^{-2(\a+1)t} \simeq \simeq e^{-ct}$ for $t > p(1)$, so,
taking into account that actually $p(xy) \simeq \log 2xy$, we see that
$$
(xy)^{-c_3} \lesssim \int_{p(xy)}^{\infty} t^{\s-1}e^{-2(\a+1)t}\,dt \lesssim (xy)^{-c_4},
$$
where $c_3$ and $c_4$ are positive constants. Thus
$$
\mathcal{I}_{\infty} \simeq\simeq \exp\big(-c(x+y)^2\big).
$$
Finally, we analyze $\mathcal{I}_0$. We split this integral getting 
\begin{align*}
\mathcal{I}_0 & \simeq\simeq (xy)^{-\a-1/2} \int_0^{p(1)} t^{\s-3/2} \exp\Big( -c\big[ t^{-1}(x-y)^2
	+ t(x+y)^2\big]\Big) \, dt \\
& \qquad + (xy)^{-\a-1/2} \exp\big(-c(x^2+y^2)\big) \int_{p(1)}^{p(xy)} t^{\s-1} e^{-t}\, dt \\
& \equiv \mathcal{I}_{0,1} + \mathcal{I}_{0,2}.
\end{align*}
Clearly,
$$
\mathcal{I}_{0,2} \lesssim (xy)^{-\a-1/2} \exp\big(-c(x^2+y^2)\big) 
	\int_{p(1)}^{\infty} t^{\s-1} e^{-t}\, dt \simeq\simeq \exp\big(-c(x+y)^2\big).
$$
Further, changing the variable of integration we see that
$$
\mathcal{I}_{0,1} \simeq\simeq (xy)^{-\a-1/2} E_{\s-3/2}\big( c(x-y)^2,c(x+y)^2\big).
$$
Altogether, the above estimates justify item (b).
\end{proof}

We are now in a position to prove Theorem \ref{thm:conv_ker}.
\begin{proof}[{Proof of Theorem \ref{thm:conv_ker}}]
We distinguish three cases.\\ 
\textbf{Case 1: $\boldsymbol{xy > 1}$.} Notice that in this case $x+y >1$, so we must show that
\begin{equation} \label{ee}
K^{\a,\s}(x,y) \simeq \simeq (x+y)^{-2\a-1} \exp\big(-c|x-y|(x+y)\big) 
	\begin{cases}
		|x-y|^{2\s-1}, & \s< 1/2,\\
		1+\log^+ \frac{1}{|x-y|(x+y)}, & \s=1/2,\\
		(x+y)^{1-2\s}, & \s>1/2.
	\end{cases}
\end{equation}
By Lemma \ref{lem:exp_JE} we know that
$$
K^{\a,\s}(x,y) \simeq \simeq \exp\big(-c(x+y)^2\big) + (xy)^{-\a-1/2} E_{\s-3/2}\big(
	c(x-y)^2,c(x+y)^2\big).
$$
Here $E_{\s-3/2}$ can be estimated by means of Lemma \ref{lem:E}, we get
\begin{align*}
K^{\a,\s}(x,y) & \simeq \simeq \exp\big(-c(x+y)^2\big) \\
	& \qquad +(xy)^{-\a-1/2} \exp\big(-c|x-y|(x+y)\big) 
	\begin{cases}
		|x-y|^{2\s-1}, & \s< 1/2,\\
		1+\log^+ \frac{1}{|x-y|(x+y)}, & \s=1/2,\\
		(x+y)^{1-2\s}, & \s>1/2.
	\end{cases}
\end{align*}
In this sum the first term can be neglected since, as easily verified, it contributes to the relation
$\simeq\simeq$ no more than the second one. Further, the factor $(xy)^{-\alpha-1/2}$ can be replaced
by $(x+y)^{-2\alpha-1}$. This is clear when $x$ and $y$ are comparable. In the opposite case, 
it suffices to take into account the bounds $(x+y)^2 \simeq |x-y|(x+y) \gtrsim xy$, recall that $xy> 1$
and use the exponential decay. The conclusion follows.\\
\textbf{Case 2: $\boldsymbol{xy \le 1}$ and $\boldsymbol{x+y > 3}$.}
Again, our aim is to prove \eqref{ee}. Observe that in this case $x$ and $y$ are non-comparable.
For symmetry reasons, we may assume that $x > 2y$. Thus the estimate to be shown is
$$
K^{\a,\s}(x,y) \simeq\simeq x^{-2\a-1} \exp\big(-c x^2\big)
	\begin{cases}
		x^{2\s-1}, & \s < 1/2,\\
		1+\log^{+}\frac{1}x, & \s=1/2,\\
		x^{1-2\s}, & \s > 1/2.
	\end{cases}
$$
But $x>1$, so it is enough to check that
\begin{equation} \label{tc}
K^{\a,\s}(x,y) \simeq\simeq \exp\big(-c x^2\big).
\end{equation}

By Lemma \ref{lem:exp_JE},
\begin{align*}
K^{\a,\s}(x,y) & \simeq \simeq  \exp\big(-c(x+y)^2\big) + (x+y)^{2\s-2\a-2} J_{\a-\s}\bigg(
	c_1(x+y)^2, c_2 \frac{(x+y)^2}{xy} \bigg) \\
	&\qquad + (xy)^{\s-\a-1} E_{\s-3/2}\bigg( c\frac{(x-y)^2}{xy}, c xy(x+y)^2 \bigg) \\
& \equiv U_1(x,y) + U_2(x,y) + U_3(x,y).
\end{align*}
Here $U_1$ agrees with the right-hand side of \eqref{tc}, 
so it suffices to bound suitably $U_2$ and $U_3$ from above.
Observe that $U_2$ can be estimated from
above by replacing the second argument of $J_{\a-\s}$ by $\infty$. Then, taking into account that
$c_1 (x+y)^2 \gtrsim x^2 > 1$, Lemma \ref{lem:J} (b) shows that 
$$
U_2(x,y)  \lesssim
x^{2\s-2\a-2} x^{2\a-2\s} \exp\big(-c_1 x^2\big) < \exp\big(-c_1 x^2\big).
$$
Thus this term also fits to \eqref{tc} contributing in the sense of $\simeq\simeq$ no more than the first one.
Finally, $U_3$ can be estimated from above by replacing the second argument of $E_{\s-3/2}$ by $0$.
Then, with the aid of Lemma \ref{lem:E} and the relations $|x-y|\simeq x+y \simeq x > 1$ we get
\begin{align*}
U_3(x,y) & \lesssim
(xy)^{\s-\a-1} \exp\bigg( - \tilde{c} \frac{x^2}{xy}\bigg)
	{\begin{cases}
		\left(\frac{x^2}{xy}\right)^{\s-1/2}, & \s < 1/2,\\
		1, & \s\ge 1/2
	\end{cases}}
	\\
	& \lesssim \exp\big( -\tilde{c} x^2/2\big),
\end{align*}
the last estimate being a consequence of the inequalities $xy \le 1$ and $x>1$.
Now \eqref{tc} follows.\\
\textbf{Case 3: $\boldsymbol{xy \le 1}$ and $\boldsymbol{x+y \le 3}$.} In this case $x \le 3$ and $y \le 3$.
Since the estimates of (i) and (ii) of Theorem \ref{thm:conv_ker} essentially 
coincide for $x$ and $y$ separated from $0$ and $\infty$, what we need to prove is 
\begin{equation} \label{ex}
K^{\a,\s}(x,y) \simeq  \chi_{\{\s>\a+1\}} + \chi_{\{\s=\a+1\}} \log^+\frac{1}{x+y} 
	 + (x+y)^{-2\a-1} 
	\begin{cases}
		|x-y|^{2\s-1}, & \s< 1/2,\\
		1+\log \frac{x+y}{|x-y|}, & \s=1/2,\\
		(x+y)^{2\s-1}, & \s>1/2.
	\end{cases}
\end{equation}

We keep using the description of $K^{\a,\s}$ in terms of $U_1$, $U_2$ and $U_3$, see above.
Observe that 
$$
U_1(x,y) \simeq 1.
$$
To estimate $U_2$ we apply Lemma \ref{lem:J} (b)--(d). 
After some elementary manipulations, taking into account that
$x$ and $y$ stay bounded, we get
\begin{equation} \label{u2}
U_2(x,y) \simeq \begin{cases}
									(x+y)^{2\s-2\a-2}, & \s < \a +1,\\
									1+\log^{+}\frac{1}{x+y}, & \s=\a+1,\\
									1, & \s > \a + 1.
								\end{cases}
\end{equation}
Considering $U_3$, by the boundedness of $x$ and $y$ and the structure of the integral $E_{\s-3/2}$ we
may assume that its second argument is $0$. Then, in view of Lemma \ref{lem:E}, we have
$$
U_3(x,y) \simeq \simeq (xy)^{-\a-1/2} \exp\bigg( -c \frac{(x-y)^2}{xy}\bigg)
	\begin{cases}
		|x-y|^{2\s-1}, & \s < 1/2,\\
		1+\log^{+} \frac{xy}{(x-y)^2}, & \s=1/2,\\
		(xy)^{\s-1/2}, & \s>1/2.
	\end{cases}
$$
We claim that, excluding the exponential factor, all the products $xy$ here can be replaced by $(x+y)^2$.
Indeed, this is clear when $x$ and $y$ are comparable. In the opposite case, say when $x> 2y$,
$\log^+$ is controlled by a constant, so its argument can be replaced by $(x+y)^2/(x-y)^2\simeq 1$.
Further, we have $(x-y)^2/{xy} \simeq {x}/y$, and given any $\gamma\in\mathbb{R}$ and 
$\mathcal{C}>0$ fixed
$$
(xy)^{\gamma} \exp\Big(-\mathcal{C}\frac{x}y\Big) = 
	x^{2\gamma} \Big(\frac{x}{y}\Big)^{-\gamma} \exp\Big(-\mathcal{C}\frac{x}y\Big) \simeq\simeq
		(x+y)^{2\gamma} \exp\Big(-c\frac{x}y\Big).
$$
The claim follows and we conclude that
\begin{equation} \label{u3}
U_3(x,y) \simeq \simeq (x+y)^{-2\a-1} \exp\bigg( -c \frac{(x-y)^2}{xy}\bigg)
	\begin{cases}
		|x-y|^{2\s-1}, & \s < 1/2,\\
		1+\log \frac{x+y}{|x-y|}, & \s=1/2,\\
		(x+y)^{2\s-1}, & \s>1/2.
	\end{cases}
\end{equation}

Assume that $x \simeq y$. Then the exponential factor on the right-hand side above is roughly a constant. Moreover,
$U_3(x,y) \gtrsim (x+y)^{2\s-2\a-2}$. Therefore, in view of \eqref{u2} and \eqref{u3}, 
for comparable $x$ and $y$
\begin{align*}
U_2(x,y) + U_3(x,y) & \simeq \chi_{\{\s>\a+1\}} + \chi_{\{\s=\a+1\}} \bigg(1+\log^+\frac{1}{x+y}\bigg) \\
	& \quad + (x+y)^{-2\a-1}
	\begin{cases}
		|x-y|^{2\s-1}, & \s < 1/2,\\
		1+\log \frac{x+y}{|x-y|}, & \s=1/2,\\
		(x+y)^{2\s-1}, & \s>1/2.
	\end{cases}
\end{align*}
Notice that the right-hand side here is separated from $0$, and this remains true even without the second term. 
Thus $U_1(x,y)\lesssim U_2(x,y)+U_3(x,y)$ and the second term 
can be replaced by $\chi_{\{\s=\a+1\}}\log^+\frac{1}{x+y}$. We see that \eqref{ex} holds when $x \simeq y$.

Finally, let $x$ and $y$ be non-comparable. For symmetry reasons, we may assume that $x > 2y$.
Then the desired estimate \eqref{ex} takes the form
\begin{equation} \label{exn}
K^{\a,\s}(x,y) \simeq \chi_{\{\s> \a+1\}} + \chi_{\{\s=\a+1\}}\log^+\frac{1}{x} + x^{2\s-2\a-2}.
\end{equation}
On the other hand, from \eqref{u2} and \eqref{u3} we have
\begin{align*}
U_2(x,y) + U_3(x,y) & \simeq\simeq \chi_{\{\s>\a+1\}} + \chi_{\{\s=\a+1\}}\bigg(1+\log^+\frac{1}{x}\bigg)
	+ \chi_{\{\s<\a+1\}} x^{2\s-2\a-2}\\ & \qquad  + x^{2\s-2\a-2}\exp\bigg(-c\frac{x}y\bigg).
\end{align*}
Observe that the fourth term on the right-hand side here is controlled by the other terms, so it may
be neglected. Moreover, the sum of the first three terms is separated from $0$ and thus controls $U_1(x,y)$.
This means that
$$
K^{\a,\s}(x,y) \simeq \chi_{\{\s>\a+1\}} + \chi_{\{\s=\a+1\}}\bigg(1+\log^+\frac{1}{x}\bigg)
	+ \chi_{\{\s<\a+1\}} x^{2\s-2\a-2}.
$$
Here we can neglect $\chi_{\{\s<\a+1\}}$ since $x^{2\s-2\a-2} \lesssim 1$ for $\s \ge \a+1$.
After that one can also replace $1+\log^+(1/x)$ by $\log^+(1/x)$ since $x^{2\s-2\a-2} \equiv 1$
when $\s=\a+1$. Thus we arrive at \eqref{exn}.

The proof of Theorem \ref{thm:conv_ker} is complete.
\end{proof}

\subsection{Estimates of the Dunkl potential kernel} \label{ssec:de}
We first focus our attention on the Dunkl heat kernel $G_t^{\a,D}(x,y)$.
Recall that this kernel is defined by means of the auxiliary function
$$
\Phi_{\a}(u) = 
	|u|^{-\a}\big[ I_{\a}(|u|) + \sgn(u) I_{\a+1}(|u|)\big].
$$
As we saw in Section \ref{ssec:dunkl}, $\Phi_{\a}(u) \simeq u^{-\a}I_{\a}(u)$, $u \ge 0$,
with the value at $u=0$ understood in a limiting sense. However, for $u < 0$ the situation is more subtle,
because of the cancellation occurring in the difference of the Bessel functions. 
Thus we now analyze the function
$$
\Psi_{\a}(u):= I_{\a}(u) - I_{\a+1}(u), \qquad u >0.
$$
For $\a=-1/2$ this has an explicit form (cf.\ \cite[(5.8.5)]{Leb}) and we have
$$
\Psi_{-1/2}(u) = \sqrt{\frac{2}{\pi u}} \exp(-u);
$$
notice the exponential decay. Further, when $\a<-1/2$, it is not difficult to see that $\Psi_{\a}(u)$ 
is negative for sufficiently large $u$. Indeed, by the standard large argument asymptotics for the
Bessel function (cf.\ \cite[(5.11.10)]{Leb}) we have
$I_{\nu}(u) = (2\pi u)^{-1/2}\exp(u)[1-(\nu-1/2)(\nu+1/2)/(2u)+\mathcal{O}(u^{-2})]$ for large $u$,
hence when $\a \neq -1/2$
\begin{equation} \label{aspsi}
\Psi_{\a}(u) \sim \frac{\a+1/2}{u} I_{\a}(u), \qquad u \to \infty.
\end{equation}
Finally, in case $\a> -1/2$ we use \cite[Theorem 2]{Na} (specified to $L_{\nu,1,0}$ and $U_{\nu,2,0}$;
see \cite[p.\,10]{Na}) getting
$$
\frac{\a+1/2}{\a+1/2+u} < 1 - \frac{I_{\a+1}(u)}{I_{\a}(u)} < \frac{2(\a+1)}{2(\a+1)+u}, \qquad u >0.
$$
This implies
$$
\Psi_{\a}(u) \simeq I_{\a}(u) \big(1\wedge u^{-1}\big), \qquad u >0.
$$
Here, in contrast with the Hermite case $\a=-1/2$, we have an exponential growth as $u \to \infty$.

From the above considerations we draw the following conclusions.
The behavior of $G_t^{\a,D}(x,y)$ is qualitatively different
in the singular case $\alpha=-1/2$ (trivial multiplicity function). 
The case $\alpha < -1/2$ (negative multiplicity functions) is exotic in the sense that the heat
kernel takes also negative values. Indeed, taking into account \eqref{aspsi}, 
we have ${G}_t^{\a,D}(x,y)<0$ when $xy<0$ and $|xy|/\sinh{2t}$ is large enough.
On the other hand, the case $\a>-1/2$ is more standard.
With the aid of Lemma~\ref{lem:her_lag} and \eqref{hkl}
we can describe the behavior of ${G}_t^{\a,D}(x,y)$ in terms of the Hermite heat kernel 
$G_t(x,y) = G_t^{-1/2,D}(x,y)$.
\begin{propo} \label{thm:her_dun}
Let $\a>-1/2$. The following estimates hold uniformly in $x,y \in \mathbb{R}$ and $t>0$.
\begin{itemize}
\item[(a)] If $xy \ge 0$, then
$$
{G}_t^{\a,D}(x,y) \simeq G_t(|x|,|y|) 
	\begin{cases}
		(\sinh 2t)^{-\a-1/2}, & xy \le \sinh 2t,\\
		(xy)^{-\a-1/2}, & xy > \sinh 2t.
	\end{cases}
$$
\item[(b)] If $xy<0$, then
$$
{G}_t^{\a,D}(x,y) \simeq G_t(|x|,|y|) 
	\begin{cases}
		(\sinh 2t)^{-\a-1/2}, & |xy| \le \sinh 2t,\\
		\sinh(2t)|xy|^{-\a-3/2}, & |xy| > \sinh 2t.
	\end{cases}
$$
\end{itemize}
\end{propo}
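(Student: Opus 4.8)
The plan is to read everything off from the explicit formula \eqref{hkl} for the Laguerre heat kernel $G_t^{\a}$ combined with Lemma \ref{lem:her_lag}, using the two Bessel-function bounds recorded immediately before the statement. Recall that $G_t^{\a,D}(x,y)$ is built from $\Phi_{\a}\big(xy/\sinh 2t\big)$, that $\Phi_{\a}(u)\simeq u^{-\a}I_{\a}(u)$ for $u\ge 0$ (with $\Phi_{\a}(0)=2^{-\a}/\Gamma(\a+1)$), and that for $\a>-1/2$ one has the two-sided bound $\Psi_{\a}(v):=I_{\a}(v)-I_{\a+1}(v)\simeq I_{\a}(v)\,(1\wedge v^{-1})$, $v>0$, coming from the ratio estimates of \cite{Na}.

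First I would treat item (a). When $xy\ge 0$ the argument $xy/\sinh 2t$ is nonnegative, so applying $\Phi_{\a}(u)\simeq u^{-\a}I_{\a}(u)$ and cancelling the factor $(\sinh 2t)^{\a}$ in the definition of $G_t^{\a,D}$ (the harmless constant $\tfrac12$ being absorbed), one gets directly
$$
G_t^{\a,D}(x,y)\simeq G_t^{\a}(|x|,|y|),\qquad xy\ge 0,
$$
because $xy=|x||y|$ and $x^2+y^2=|x|^2+|y|^2$ in this range. Now Lemma \ref{lem:her_lag} turns this into $G_t^{\a,D}(x,y)\simeq \big(|xy|\vee\sinh 2t\big)^{-\a-1/2}G_t(|x|,|y|)$, and distinguishing $|xy|\le\sinh 2t$ from $|xy|>\sinh 2t$ in the maximum yields precisely the two subcases of (a); the degenerate case $xy=0$ always falls under the first subcase.

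For item (b), with $xy<0$ the argument of $\Phi_{\a}$ is negative, and writing $v=|xy|/\sinh 2t>0$ we have $\Phi_{\a}\big(xy/\sinh 2t\big)=v^{-\a}\Psi_{\a}(v)$. Here I would invoke the bound $\Psi_{\a}(v)\simeq I_{\a}(v)(1\wedge v^{-1})$ valid for $\a>-1/2$, so that, again after cancelling $(\sinh 2t)^{\a}$,
$$
G_t^{\a,D}(x,y)\simeq G_t^{\a}(|x|,|y|)\,\Big(1\wedge\frac{\sinh 2t}{|xy|}\Big),\qquad xy<0.
$$
Combining with Lemma \ref{lem:her_lag} gives $G_t^{\a,D}(x,y)\simeq \big(|xy|\vee\sinh 2t\big)^{-\a-1/2}\big(1\wedge\tfrac{\sinh 2t}{|xy|}\big)G_t(|x|,|y|)$; on $\{|xy|\le\sinh 2t\}$ the last factor is $1$ and the first is $(\sinh 2t)^{-\a-1/2}$, while on $\{|xy|>\sinh 2t\}$ the product of the first two factors equals $\sinh(2t)\,|xy|^{-\a-3/2}$, which is exactly item (b).

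The whole argument is elementary; the only point demanding attention is the bookkeeping of the powers of $\sinh 2t$ and $|xy|$ when the Bessel estimates are substituted into \eqref{hkl}, together with the verification that all implied constants are uniform in $x,y,t$ — which they are, since the asymptotics \eqref{asym} and the ratio bounds from \cite{Na} used above are themselves uniform. I anticipate no serious obstacle: the hypothesis $\a>-1/2$ enters only through the two-sided estimate of $\Psi_{\a}$, and is exactly what precludes the sign changes of $\Psi_{\a}$, hence of $G_t^{\a,D}$, discussed in the surrounding text.
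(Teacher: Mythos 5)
Your proof is correct and follows exactly the route the paper intends: the paper states the proposition without a separate proof precisely because it is the combination of \eqref{hkl}, Lemma \ref{lem:her_lag}, the bound $\Phi_{\a}(u)\simeq u^{-\a}I_{\a}(u)$ for $u\ge 0$, and the two-sided estimate $\Psi_{\a}(u)\simeq I_{\a}(u)(1\wedge u^{-1})$ derived from N{\aa}sell's ratio bounds in the preceding discussion. Your bookkeeping of the powers of $\sinh 2t$ and $|xy|$ in both cases checks out, and the extra factor $1\wedge\frac{\sinh 2t}{|xy|}$ you isolate in case (b) is exactly what the paper encodes in the auxiliary kernel $\widetilde{K}^{\a,\s}$ introduced immediately afterwards.
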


Note that item (a) will not be needed for the proof of Theorem \ref{thm:d_ker}, but we state it for
the sake of completeness. On the other hand, (b) is essential, together with 
good estimates of the resulting auxiliary kernel
$$
\widetilde{K}^{\a,\s}(x,y) = \int_0^{\infty} \Big( \frac{\sinh 2t}{xy} \wedge 1\Big) 
	(xy \vee \sinh 2t)^{-\a-1/2} G_t(x,y) t^{\s-1}\, dt, \qquad x,y >0.
$$
\begin{lem} \label{lem:d_exp_JE}
Let $\a>-1$. The following estimates hold uniformly in $x,y>0$.
\begin{itemize}
\item[(a)] If $xy \le 1$, then
\begin{align*}
\widetilde{K}^{\a,\s}(x,y) \simeq \simeq &\, \exp\big(-c(x+y)^2\big) + (x+y)^{2\s-2\a-2} J_{\a-\s}\bigg(
	c_1(x+y)^2, c_2 \frac{(x+y)^2}{xy} \bigg) \\
	&\,+ (xy)^{\s-\a-1} E_{\s-1/2}\bigg( c\frac{(x-y)^2}{xy}, c xy(x+y)^2 \bigg),
\end{align*}
where $c_1 < c_2$ are positive constants, independent of $x$ and $y$, that may be different in the lower
and upper estimate.
\item[(b)] If $xy>1$, then
$$
\widetilde{K}^{\a,\s}(x,y) \simeq \simeq \exp\big(-c(x+y)^2\big) + (xy)^{-\a-3/2} E_{\s-1/2}\big(
	c(x-y)^2,c(x+y)^2\big).
$$
\end{itemize}
\end{lem}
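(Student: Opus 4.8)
The strategy parallels closely the proof of Lemma \ref{lem:exp_JE}, the only structural novelty being the extra factor $(\sinh 2t / xy) \wedge 1$ in the definition of $\widetilde K^{\a,\s}$. First I would record, using Lemma \ref{lem:her_lag}, that
$$
\widetilde K^{\a,\s}(x,y) \simeq \int_0^\infty \Big(\frac{\sinh 2t}{xy}\wedge 1\Big)(xy\vee\sinh 2t)^{-\a-1/2} G_t(x,y)\, t^{\s-1}\, dt,
$$
and then split the integral at $t=p(xy)$, where $\sinh 2p(r)=r$, exactly as before. On $(0,p(xy))$ we have $\sinh 2t \le xy$, so the extra factor equals $\sinh(2t)/xy$ and the weight $(xy\vee\sinh 2t)^{-\a-1/2}=(xy)^{-\a-1/2}$; on $(p(xy),\infty)$ the extra factor is $1$ and the weight is $(\sinh 2t)^{-\a-1/2}$. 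Hence, writing out $G_t(x,y)$ explicitly,
$$
\widetilde K^{\a,\s}(x,y)\simeq (xy)^{-\a-3/2}\int_0^{p(xy)}(\sinh 2t)^{1/2}\, t^{\s-1}\exp\!\Big(-\tfrac14\big[\tanh(t)(x+y)^2+\coth(t)(x-y)^2\big]\Big)dt + \mathcal I_\infty,
$$
where $\mathcal I_\infty$ is literally the same tail integral as in Lemma \ref{lem:exp_JE}. The key observation is that the only change in the first piece, compared with $\mathcal I_0$ there, is that the power $(\sinh 2t)^{-1/2}$ has been replaced by $(\sinh 2t)^{+1/2}$, i.e. effectively $t^{\s-3/2}$ becomes $t^{\s-1/2}$ under $\sinh 2t\simeq t$ on the relevant range, and an overall factor $(xy)^{-1}$ has been pulled out.

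From here the computation is mechanical. In case (a), $xy\le 1$: the tail $\mathcal I_\infty$ is handled identically to Lemma \ref{lem:exp_JE}, producing $\exp(-c(x+y)^2)+(x+y)^{2\s-2\a-2}J_{\a-\s}(c_1(x+y)^2,c_2(x+y)^2/xy)$. For the first piece I would substitute $t=sp(xy)$, use $p(xy)\simeq xy$ on this range, and recognize the resulting $s$-integral as $E_{\s-1/2}$ (rather than $E_{\s-3/2}$); tracking the constant factors gives $(xy)^{\s-\a-1}E_{\s-1/2}(c(x-y)^2/xy,\, cxy(x+y)^2)$, which is exactly the claimed third term. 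In case (b), $xy>1$: again $\mathcal I_\infty\simeq\simeq\exp(-c(x+y)^2)$ by the same argument about $t^{\s-1}e^{-2(\a+1)t}$ for $t>p(1)$, and $p(xy)\simeq\log 2xy$; splitting the first piece at $p(1)$, the portion over $(p(1),p(xy))$ is dominated by $\exp(-c(x+y)^2)$ and the portion over $(0,p(1))$ gives, after the change of variable, $(xy)^{-\a-3/2}E_{\s-1/2}(c(x-y)^2,c(x+y)^2)$.

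I do not expect a genuine obstacle here; the proof is a careful bookkeeping exercise tracking how the modification $(\sinh 2t/xy)\wedge 1$ propagates through the argument of Lemma \ref{lem:exp_JE}. The one point that requires minor care is the shift in the index of the $E$-integral, from $\s-3/2$ to $\s-1/2$: the extra factor $(\sinh 2t)^{+1/2}$ rather than $(\sinh 2t)^{-1/2}$ raises the power of $t$ in the integrand by one, which is precisely why $E_{\s-1/2}$ appears, and correspondingly an extra $(xy)^{-1}$ is absorbed to turn $(xy)^{-\a-1/2}$ into $(xy)^{-\a-3/2}$ in part (b), while in part (a) it is absorbed into the $p(xy)^{\s+1/2}\simeq (xy)^{\s+1/2}$ prefactor so that the net exponent of $xy$ outside $E_{\s-1/2}$ is again $\s-\a-1$. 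Beyond that, every estimate—the splitting point $\mathcal C$ with $2p(xy)\le\mathcal C xy$, the negligibility of the $\exp(-c(x+y)^2)$ terms, the change of variables in the $J$-type integral—carries over verbatim from the proof of Lemma \ref{lem:exp_JE}, so I would simply indicate the modifications and refer to that proof for the remaining details.
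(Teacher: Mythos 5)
Your proposal is correct and follows essentially the same route as the paper: split at $p(xy)$, note that $\mathcal{I}_\infty$ is unchanged from the proof of Lemma \ref{lem:exp_JE}, and observe that the local piece becomes the integral from \eqref{bt} with $(\a,\s)$ replaced by $(\a+1,\s+1)$, which produces $E_{\s-1/2}$ and the stated prefactors. The one spot that is not quite ``verbatim'' is the middle integral $\int_{p(1)}^{p(xy)}t^{\s-1}e^{t}\,dt$ in case (b), whose integrand now grows rather than decays; your claim that this portion is still dominated by $\exp(-c(x+y)^2)$ is correct, but it needs the extra observation $e^{2p(xy)}\simeq xy$ together with absorbing the resulting power of $xy$ into the Gaussian factor.
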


\begin{proof}
Using the notation of the proof of Lemma \ref{lem:exp_JE} and recalling the explicit formulas for $G_t(x,y)$,
we can write
\begin{align*}
& \widetilde{K}^{\a,\s}(x,y) \\
&  \simeq   (xy)^{-\a-3/2}\int_0^{p(xy)} (\sinh 2t)^{1/2} t^{\s-1}
	\exp\Big( -\frac{1}{4} \big[ \tanh(t)\, (x+y)^2 + \coth(t)\,(x-y)^2\big] \Big) \, dt\\
	& \qquad + \int_{p(xy)}^{\infty}(\sinh 2t)^{-\a-1} t^{\s-1} \exp\Big( -\frac{1}2 \coth(2t)\, 
		\big(x^2+y^2\big) \Big) \, dt \\
	& \equiv \mathcal{I}_0 + \mathcal{I}_{\infty}.
\end{align*}
Here $\mathcal{I}_{\infty}$ is the same as in the proof of Lemma \ref{lem:exp_JE}, so we need to analyze
only $\mathcal{I}_0$. 

In case (a) we have $p(xy) \lesssim xy \le 1$, so
$$
\mathcal{I}_0 \simeq\simeq (xy)^{-\a-3/2} \int_0^{p(xy)} t^{\s-1/2} \exp\Big( -c \big[ t^{-1}(x-y)^2
	+ t(x+y)^2 \big] \Big) \, dt.
$$
The right-hand side here coincides with the right-hand side in \eqref{bt} 
after replacing $\a$ by $\a+1$ and $\s$ by $\s+1$. Thus we already know that
$$
\mathcal{I}_0 \simeq\simeq (xy)^{\s-\a-1} E_{\s-1/2}\bigg( c\frac{(x-y)^2}{xy},cxy(x+y)^2\bigg).
$$

Considering (b), when $xy>1$ we have
\begin{align*}
\mathcal{I}_0 & \simeq\simeq (xy)^{-\a-3/2} \int_0^{p(1)} t^{\s-1/2} \exp\Big( -c\big[ t^{-1}(x-y)^2
	+ t(x+y)^2\big]\Big) \, dt \\
& \qquad + (xy)^{-\a-3/2} \exp\big(-c(x^2+y^2)\big) \int_{p(1)}^{p(xy)} t^{\s-1} e^{t}\, dt \\
& \equiv \mathcal{I}_{0,1} + \mathcal{I}_{0,2}.
\end{align*}
As in the proof of Lemma \ref{lem:exp_JE},
$$
\mathcal{I}_{0,1} \simeq\simeq (xy)^{-\a-3/2} E_{\s-1/2}\big( c(x-y)^2,c(x+y)^2\big).
$$
Moreover, since $e^{2p(xy)} \simeq xy$,
\begin{align*}
\mathcal{I}_{0,2} & \lesssim (xy)^{-\a-3/2} \exp\big(-c(x^2+y^2)\big) 
	\int_{p(1)}^{p(xy)} e^{2t}\, dt \\
& \lesssim (xy)^{-\a-1/2} \exp\big(-c(x^2+y^2)\big)
	\simeq\simeq \exp\big(-c(x+y)^2\big).
\end{align*}
The conclusion follows.
\end{proof}

\begin{proof}[{Proof of Theorem \ref{thm:d_ker}}]
Let us first assume  $xy \ge 0$.
Observe that $K^{\a,\s}_D(x,y)=K^{\a,\s}_D(-x,-y)$, so it is enough to consider the case $x,y \ge 0$.
If $x,y>0$, then we easily get the desired estimates by means of \eqref{kkp2} 
and Theorem \ref{thm:conv_ker}.
If $x=0$ or $y=0$, then \eqref{kkp2} still holds, with a limiting understanding of the values of
$K^{\a,\s}(x,y)$ and, implicitly, $G_t^{\a}(x,y)$. Tracing the proof of Theorem \ref{thm:conv_ker},
one can ensure that the asserted bounds for $K^{\a,\s}(x,y)$ remain true for all $x,y \ge 0$, hence the
conclusion again follows.

Assume next that $xy<0$. Taking into account Proposition \ref{thm:her_dun} (b), we infer that
$K_{D}^{\a,\s}(x,y) \simeq \widetilde{K}^{\a,\s}(|x|,|y|)$. On the other hand, the estimates of
Lemma \ref{lem:d_exp_JE} coincide with the estimates of Lemma \ref{lem:exp_JE} with
$\a$ replaced by $\a+1$ and $\s$ replaced by $\s+1$. Thus the behavior of $\widetilde{K}^{\a,\s}(x,y)$
is the same as the behavior of $K^{\a+1,\s+1}(x,y)$ in the sense of the bounds from 
Theorem \ref{thm:conv_ker}. 
Now the conclusion follows by observing that $|x|+|y| = |x-y|$ and $||x|-|y|| = |x+y|$ when $xy < 0$.
\end{proof}

Finally, we prove Proposition \ref{prop:neg}.
\begin{proof}[{Proof of Proposition \ref{prop:neg}}]
Let $x,y\in\mathbb{R}^2$ be such that $xy < 0$. By the asymptotics \eqref{aspsi} it follows that
\begin{equation} \label{t4}
{G}_t^{\a,D}(x,y) \lesssim - G_t(|x|,|y|) \sinh(2t) |xy|^{-\a-3/2}
\end{equation}
provided that $|xy|/\sinh{2t}$ is sufficiently large. We then focus on $x$ and $y$ such that
\eqref{t4} holds uniformly in $t \le p(1)$, and we may assume that $|xy|> 1$. As in the proof of
Lemma \ref{lem:d_exp_JE} (b), we infer that
$$
{K}_D^{\a,\s}(x,y) \le c_1\exp\big( -c_2(|x|+|y|)^2\big) - c_3 |xy|^{-\a-3/2}
	E_{\s-1/2}\big( c_4(|x|-|y|)^2,c_4(|x|+|y|)^2\big)
$$  
for $x,y$ in question, with some positive constants $c_i$, $i=1,\ldots,4$. 
The right-hand side here is certainly
negative when $y=-x$ and $|x|$ is large enough, say $|x|\ge \mathcal{C}>0$, as can be seen from
Lemma~\ref{lem:E}. For continuity reasons, the same must be true for $(x,y)$ laying in a neighborhood of 
the set $\{(x,y): x=-y,\; |x|\ge \mathcal{C}\}$.
\end{proof}

\section{$L^p-L^q$ estimates} \label{sec:LpLq}

This section is devoted to the proofs of Theorems \ref{thm:LpLqlag}, \ref{thm:lag_H} and \ref{thm:lag_D}.
Given $1\le p \le \infty$, we denote by $p'$ its conjugate exponent, $1/p+1/p'=1$.
\subsection{$\boldsymbol{L^p-L^q}$ estimates in the Laguerre setting of convolution type}
Theorem \ref{thm:LpLqlag} follows immediately from the two lemmas 
below that describe sharply $L^p-L^q$ behavior of two auxiliary operators (with non-negative kernels)
into which $I^{\a,\s}$ splits naturally. These operators are interesting
in their own right, so for the sake of completeness the lemmas provide slightly more information than
actually needed to conclude Theorem \ref{thm:LpLqlag}.

We split $I^{\a,\s}$ according to the kernel splitting
\begin{align*}
K^{\a,\s}(x,y) & = \chi_{\{x\le 2,y\le 2\}} K^{\a,\s}(x,y) + \chi_{\{x \vee y > 2\}}K^{\a,\s}(x,y) \\
	& \equiv K^{\a,\s}_0(x,y) + K^{\a,\s}_\infty(x,y)
\end{align*}
and denote the resulting integral operators by $I^{\a,\s}_0$ and $I^{\a,\s}_\infty$, respectively.
\begin{lem} \label{lem:lag_loc}
Let $\a > -1$, $\s>0$ and $1 \le p,q \le \infty$. Set
$\delta:= ((-1/2) \vee \a) + 1$. Then $I_0^{\a,\s}$ is bounded from $L^p(d\mu_{\a})$ to $L^q(d\mu_{\a})$
if and only if
$$
\frac{1}{p} - \frac{\s}{\delta} \le \frac{1}q
$$
and $(\frac{\s}{\delta},0) \neq (\frac{1}p,\frac{1}q) \neq (1,1-\frac{\s}{\delta})$.
\end{lem}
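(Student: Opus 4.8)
The plan is to reduce the $L^p$--$L^q$ boundedness of the local operator $I_0^{\a,\s}$ to a question about the standard fractional integral on a bounded interval of $\mathbb R_+$ equipped with the measure $d\mu_\a$, by first replacing the kernel $K_0^{\a,\s}(x,y)$ by the pointwise bound from Theorem \ref{thm:conv_ker}(i) restricted to $x,y\le 2$. On this region the estimate reads (up to constants) $K^{\a,\s}(x,y)\simeq \chi_{\{\s>\a+1\}}+\chi_{\{\s=\a+1\}}\log\frac1{x+y}+(x+y)^{-2\a-1}\cdot(\cdots)$, where the bracketed factor is $|x-y|^{2\s-1}$, $1+\log\frac{x+y}{|x-y|}$, or $(x+y)^{2\s-1}$ according as $\s<1/2$, $\s=1/2$, $\s>1/2$. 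First I would observe that the ``$\chi$'' terms and the $(x+y)^{2\s-1}$ summand (when $\s>1/2$) are dominated, on $x,y\le 2$, by a kernel of the form $(x+y)^{2\s-2\a-2}\lesssim(x+y)^{-2\delta}\,(x+y)^{2\s}$; in any case these are all bounded above by a pure power $(x+y)^{-2\delta+2\s}$ when $\s\le\delta$, and bounded when $\s>\delta$. So the governing behavior is that of the kernel $(x+y)^{-2\a-1}|x-y|^{2\s-1}$ for small $\s$, which near the diagonal behaves like $(x+y)^{-2\a-1}|x-y|^{2\s-1}$ and encodes two competing singularities: the diagonal singularity $|x-y|^{2\s-1}$ and the singularity at the origin carried by $(x+y)^{-2\a-1}$ (present only when $\a>-1/2$, since for $\a\le-1/2$ the factor $x^{2\a+1}$ in $d\mu_\a$ reabsorbs it).

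The key step is to split $I_0^{\a,\s}$ further according to whether $(x,y)$ lies near the diagonal, i.e.\ $|x-y|\le \tfrac12(x+y)$, or off-diagonal, $|x-y|> \tfrac12(x+y)$. Off the diagonal (and with $x,y\le 2$) the quantities $x$, $y$, $x+y$, $|x-y|$ are mutually comparable in a scaling sense and the whole kernel is comparable, up to logs, to $(x+y)^{2\s-2\a-2}$, i.e.\ to a homogeneous kernel of degree $2\s-2\a-2$ in $(x,y)\in(0,2]^2$. Boundedness of such a homogeneous-kernel operator on the bounded set $(0,2]^2$ against $d\mu_\a=x^{2\a+1}dx$ is governed by a classical Schur-test / homogeneity computation: testing on $f(y)=y^{-s}$ one reads off exactly the necessary condition $\frac1p-\frac{\s}{\delta}\le\frac1q$ for $\a\ge-1/2$ (so $\delta=\a+1$), and the analogous condition with $\delta=1/2$ for $\a<-1/2$; the two regimes merge into $\delta=((-1/2)\vee\a)+1$. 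Near the diagonal I would instead invoke the one-dimensional Hardy--Littlewood--Sobolev theorem (or a direct Schur argument with power weights) for the kernel $|x-y|^{2\s-1}$ against Lebesgue-equivalent measure on a bounded set, which contributes the endpoint-exclusion condition: boundedness up to and including $\frac1p-\frac\s\delta=\frac1q$ except at the two corner points $(\frac1p,\frac1q)=(\frac\s\delta,0)$ and $(1,1-\frac\s\delta)$, where the $L^1$- and $L^\infty$-type endpoints fail (this is where the $\log$ in the $\s=1/2$ case and the sharpness of the weighted inequalities become visible).

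For necessity I would produce explicit near-counterexamples: power functions $f(y)=y^{-a}\chi_{(0,1)}(y)$ and dualized versions, together with the standard ``bump near a point'' functions, to show that violating $\frac1p-\frac\s\delta\le\frac1q$, or sitting at one of the two excluded corners, forces $\|I_0^{\a,\s}f\|_{L^q}=\infty$ or an unbounded ratio; here the kernel estimate from Theorem \ref{thm:conv_ker}(i) is used \emph{from below}, which is why a two-sided estimate was needed rather than just an upper bound. Finally I would assemble the pieces: the diagonal and off-diagonal contributions are each bounded iff the stated condition holds, so their sum $I_0^{\a,\s}$ is bounded iff the condition holds, giving the lemma.

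\textbf{Main obstacle.} The delicate point is the behavior exactly on the critical line $\frac1p-\frac\s\delta=\frac1q$: one must show boundedness holds there \emph{except} at the two corner points, which requires a careful weighted estimate (essentially a power-weight Hardy--Littlewood--Sobolev inequality on a bounded interval) rather than the naive Schur test, and simultaneously a matching lower-bound construction showing failure precisely at those corners. Handling the $\s=1/2$ logarithmic kernel and, for $\a<-1/2$, correctly tracking how the measure $x^{2\a+1}dx$ interacts with the origin singularity $(x+y)^{-2\a-1}$ (so that the effective exponent becomes $\delta=1/2$ rather than $\a+1$) is the part most likely to need genuine care.
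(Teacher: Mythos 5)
Your proposal is correct in overall strategy but takes a genuinely different route from the paper. The paper does not estimate $I_0^{\a,\s}$ directly at all: it observes that, by Theorem \ref{thm:conv_ker}, the local kernel is pointwise comparable on $(0,2]^2$ to the Jacobi potential kernel $\mathcal{K}_{\s}^{\a,\b}$ of \cite{NR} for every $\b>-1$, and that $d\mu_{\a}$ is comparable to the Jacobi measure there; choosing $\b\le -1/2$ (and additionally $\b\le\a$ for the counterexamples) it transfers wholesale both the positive and the negative parts of \cite[Theorem 2.3]{NR}. Your plan --- diagonal/off-diagonal splitting, HLS and weighted Schur tests, explicit power-type counterexamples --- amounts to re-proving that Jacobi result in the present coordinates; it is more self-contained, at the price of redoing the hard analytic work that the citation absorbs.

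Two points in your sketch need repair before it becomes a proof. First, the attribution of the excluded corners is wrong when $\a>-1/2$: there $\frac{\s}{\delta}=\frac{\s}{\a+1}<2\s$, so at $(\frac1p,\frac1q)=(\frac{\s}{\a+1},0)$ and $(1,1-\frac{\s}{\a+1})$ the near-diagonal kernel $|x-y|^{2\s-1}$ is strictly subcritical for HLS and that piece is bounded; the failure at these corners is caused by the origin singularity $(x+y)^{2\s-2\a-2}$ and is detected by logarithmically corrected powers $f(y)=y^{-(2\a+2)/p}(\log(e/y))^{-\gamma}$, not by the $L^1$- and $L^\infty$-type endpoints of the diagonal convolution. (Your picture is accurate for $\a\le -1/2$, where $\delta=1/2$ and the corners are the genuine HLS endpoints.) Relatedly, near the origin the diagonal piece is not a fractional integral against a Lebesgue-equivalent measure: the weight $x^{2\a+1}$ in the output norm does not cancel against $(x+y)^{-2\a-1}$, and a dyadic decomposition of $\{x\simeq y\simeq 2^{-k}\}$ shows this piece also produces the exponent $\frac{\s}{\a+1}$ rather than $2\s$. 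Second, and more importantly, sufficiency on the critical line $\frac1q=\frac1p-\frac{\s}{\a+1}$ for $\a\ge -1/2$, which you correctly flag as the main obstacle, is the actual crux of the positive part: naive Schur testing or dyadic summation only yields the open region $\frac1q>\frac1p-\frac{\s}{\a+1}$, and one must import a genuine endpoint result --- a Stein--Weiss inequality, or a fractional integration theorem on the space of homogeneous type $((0,2],d\mu_{\a})$ with $\mu_{\a}(B(x,r))\simeq r(x+r)^{2\a+1}$, in the spirit of \cite[Corollary 5.2]{AM} as used for the global part. Without that ingredient the argument is incomplete exactly where the lemma is sharp.
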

\begin{lem} \label{lem:lag_glob}
Let $\a > -1$, $\s>0$ and $1 \le p,q \le \infty$. Set $\eta :=  1/2\vee(-\a)$. 
Then $I^{\a,\s}_\infty$ is bounded from $L^p(d\mu_{\a})$ to $L^q(d\mu_{\a})$
if and only if
$$
\frac{1}p - \frac{\s}\eta \le \frac{1}q < \frac{1}p + \frac{\s}{\a+1}
$$
and $(2\s,0) \neq (\frac{1}p,\frac{1}q) \neq (1,1-2\s)$ when $\s \le \eta ={1}/2$.
\end{lem}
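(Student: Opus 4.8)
\textbf{Proof proposal for Lemma \ref{lem:lag_glob}.}

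The plan is to analyze the non-negative integral operator $I^{\a,\s}_\infty$ by first replacing its kernel $K^{\a,\s}_\infty(x,y)$ with the explicit bounds furnished by Theorem \ref{thm:conv_ker} (ii). Since the operator is supported in the region $x\vee y>2$, in particular $x+y>1$, the kernel behaves like
$$
K^{\a,\s}_\infty(x,y) \simeq\simeq \chi_{\{x\vee y>2\}}\,(x+y)^{-2\a-1}\exp\big(-c|x-y|(x+y)\big)\, W_\s(x,y),
$$
where $W_\s$ denotes the $\s$-dependent factor from the theorem. Because of the exponential factor, the kernel is essentially concentrated near the diagonal, and it is natural to split $I^{\a,\s}_\infty$ into a ``diagonal'' piece where $|x-y|\lesssim 1/(x+y)$ (so the exponential is $\simeq 1$) and an ``off-diagonal'' piece where $|x-y|\gtrsim 1/(x+y)$ (where the exponential decay dominates and readily yields boundedness for all admissible $p,q$, via Schur-type estimates against $d\mu_\a$). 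The core of the matter is therefore the diagonal piece.

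For the diagonal piece, on the set $\{x\vee y>2,\ |x-y|(x+y)\lesssim 1\}$ we have $x\simeq y$, $d\mu_\a(y)\simeq x^{2\a+1}dy$, and $(x+y)^{-2\a-1}\simeq x^{-2\a-1}$; combined with $W_\s(x,y)\simeq x^{1-2\s}$ when $\s>1/2$, or $|x-y|^{2\s-1}$ (respectively the logarithmic factor) when $\s<1/2$ ($\s=1/2$), one sees the operator is comparable to a one-dimensional convolution-type operator in the variable $x$ with kernel roughly $|x-y|^{2\s-1}$ truncated to $|x-y|\lesssim 1/x$, weighted by powers of $x$. The sufficiency direction should then follow from a dyadic decomposition in $x$ together with the classical one-dimensional fractional-integration (Hardy--Littlewood--Sobolev) estimate on each scale, keeping careful track of how the measure weight $x^{2\a+1}$ and the truncation width $1/x$ interact; this is where the two endpoints $\s/\eta$ and $\s/(\a+1)$ and the shape of the admissible region emerge, with $\eta=1/2\vee(-\a)$ arising from the local-in-scale fractional integration and $\s/(\a+1)$ from summing the dyadic contributions. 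For necessity, one tests the operator on functions of the form $f=\chi_{[N,2N]}$ (and their weighted analogues) and on point-mass-like approximations, letting $N\to\infty$ or $N\to 0$; this pins down the boundary lines, while the excluded endpoint $(2\s,0)$ versus $(1,1-2\s)$ in the case $\s\le\eta=1/2$ comes from a logarithmic divergence detectable by a sharper test function (e.g.\ $f(y)=y^{-a}(\log y)^{-b}\chi_{\{y>2\}}$ with borderline exponents).

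The main obstacle I expect is the sufficiency argument for the diagonal piece in the regime $\a<-1/2$, where $\eta=-\a$ and the weight $x^{2\a+1}$ has a negative exponent: there the interplay between the scale-dependent truncation $|x-y|\lesssim 1/x$, the weight, and the singularity $|x-y|^{2\s-1}$ must be balanced precisely to obtain the asymmetric condition $\frac1p-\frac\s\eta\le\frac1q<\frac1p+\frac\s{\a+1}$ rather than a symmetric one. A clean way to organize this is to conjugate the diagonal operator by the power weight $x^{\a+1/2}$ (which converts $d\mu_\a$ into $dx$) and recognize the result as a kernel operator amenable to the scaling/Schur-test machinery already used in \cite{NoSt2} and \cite{NR}; one then verifies the requisite integral bounds $\int K_{\mathrm{diag}}(x,y)^{r}\,w(x)\,dx<\infty$ and its dual, reading off exactly the stated range. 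The endpoint exclusions and the strictness of the upper inequality require separate, elementary but delicate, logarithmic test-function computations.
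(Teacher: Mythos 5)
Your outline is a genuinely different route from the paper's. The paper does not perform a diagonal/off-diagonal splitting followed by dyadic HLS; instead it computes the exact mixed norms $\|K^{\a,\s}_\infty(x,\cdot)\|_{L^p(d\mu_{\a})}\simeq (1\vee x)^{-2\s+2\a(1/p-1)}$ (Lemma \ref{lem:ass}), deduces $L^p-L^1$ and $L^1-L^q$ boundedness by H\"older and Minkowski, dualizes using the symmetry of the kernel, and fills in the interior by Riesz--Thorin; the one remaining boundary segment ($\s<\eta=1/2$, $\frac1q=\frac1p-2\s$, $2\s<\frac1p<1$) is handled by recognizing $U^{\a,\s}$ as a fractional integral on a space of homogeneous type and citing \cite[Corollary 5.2]{AM}. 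Your scheme could in principle be made to work, but as written it has concrete gaps. First, the admissible region contains points with $p=1$ or $q=\infty$ (all of the segment $\{1\}\times(1-2\s,1]$, for instance), where HLS is unavailable; on each dyadic block you would have to fall back on Young's inequality and exact computations of the kernel's $L^{p'}(d\mu_\a)$ sections --- which is precisely the content of Lemma \ref{lem:ass}, the workhorse your sketch omits. Second, your claim that the off-diagonal piece is ``readily'' bounded for all admissible $p,q$ is too quick: a symmetric Schur test gives only $L^p-L^p$ bounds, and the corner pieces ($x\le 1$, $y>2$ and its reflection) are exactly where the constraint $\frac1q<\frac1p+\frac{\s}{\a+1}$ interacts with integrability of $y^{2\s-2\a-2}e^{-cy^2}$ against $d\mu_\a$.

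On the necessity side there is a step that fails as written. Testing on $f=\chi_{[N,2N]}$ only rules out $\frac1q-\frac1p>\frac{\s}{\a+1}$, not the equality case; the strictness of the upper inequality requires the slowly varying test function $f(y)=\chi_{\{y>e\}}y^{-2(\a+1)/p}(\log y)^{-1/p-\s/(\a+1)}$ together with the pointwise lower bound $I^{\a,\s}_\infty f(x)\gtrsim x^{-2\s}f(x)$, which is a diagonal-at-infinity phenomenon. You do invoke logarithmic test functions, but you assign them to the excluded corner points $(2\s,0)$ and $(1,1-2\s)$; those are in fact \emph{local} phenomena, detected in the paper by functions with a power or logarithmic singularity at a fixed finite point (supported in $(2,3)$), since by Theorem \ref{thm:conv_ker} (ii) one has $K^{\a,\s}_\infty(x,y)\gtrsim|x-y|^{2\s-1}$ for $x,y\in(2,4)$. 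Finally, for $\a<-1/2$ the lower boundary $\frac1q\ge\frac1p+\frac{\s}{\a}$ is detected neither by $\chi_{[N,2N]}$ nor by local singularities, but by thin intervals $f_n=\chi_{(n,n+1/n)}$ whose $\mu_\a$-mass $n^{2\a}$ shrinks; your ``point-mass-like approximations'' would need to be exactly these, and the computation showing $\|I^{\a,\s}_\infty f_n\|_q/\|f_n\|_p\gtrsim n^{-2\s-2\a(1/p-1/q)}$ is where the value $\eta=-\a$ actually comes from.
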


The first of these lemmas follows essentially from the recent results of Nowak and Roncal \cite{NR}
for potential operators in the setting of Jacobi expansions.

\begin{proof}[{Proof of Lemma \ref{lem:lag_loc}}]
In view of Theorem \ref{thm:conv_ker}, $K^{\a,\s}_{0}(x,y)$ satisfies the sharp estimates of 
Theorem \ref{thm:conv_ker} (i) in the square $0 < x,y \le 2$, and vanishes outside this square.
Comparing to \cite[Theorem~2.3]{NR}, we see that the behavior of $K^{\a,\s}_0(x,y)$ for $x,y \le 2$
is exactly the same as the behavior of the Jacobi potential kernel 
$\mathcal{K}_{\s}^{\a,\beta}(\theta,\varphi)$ in the Jacobi trigonometric polynomial setting on the
interval $(0,\pi)$. More precisely, for any fixed $\beta > -1$,
$$
K^{\a,\s}_0(x,y) \simeq \mathcal{K}_{\s}^{\a,\beta}(x,y), \qquad 0< x,y \le 2.
$$
Moreover, the Laguerre and Jacobi measures are comparable on the interval $(0,2]$.

It is now clear that the positive results of \cite[Theorem 2.3]{NR} for the Jacobi potential
operator are inherited by $I^{\a,\s}_0$. Choosing $\beta\le -1/2$ we conclude the mapping
properties of $I^{\a,\s}_0$ asserted in the lemma. If in addition $\beta \le \alpha$, all the
counterexamples and related arguments given in the proof of \cite[Theorem 2.3]{NR}, 
see \cite[Section 4.1]{NR}, remain valid for $I^{\a,\s}_0$. Hence in this case $I^{\a,\s}_0$
inherits also the negative results stated in \cite[Theorem 2.3]{NR}.
This completes the proof.
\end{proof}

To prove Lemma \ref{lem:lag_glob} we will need the technical result stated below.
\begin{lem} \label{lem:ass}
Let $\a > -1$ and $\s>0$. Then the estimates
\begin{equation}  \label{aa}
\|K^{\a,\s}_\infty(x,\cdot)\|_{L^p(d\mu_{\a})}\simeq (1\vee x)^{-2\s+2\a(1\slash p-1)}, \qquad x>0,
\end{equation}
hold for $1 \le p \le \infty$ when $\s>1/ 2$ and for $1 \le p < \frac{1}{1-2\s}$ when $\s\le 1/2$. 

Moreover, for $\sigma\le 1/2$ and $\frac{1}{1-2\s} \le p \le \infty$, we have
\begin{equation}  \label{bb}
\|K^{\a,\s}_\infty(x,\cdot)\|_{L^p(d\mu_{\a})} = \infty, \qquad x > 4.
\end{equation}
\end{lem}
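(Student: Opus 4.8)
The plan is to read off both \eqref{aa} and \eqref{bb} directly from the sharp two-sided bounds of Theorem~\ref{thm:conv_ker}. Since $K^{\a,\s}_\infty(x,y)$ vanishes unless $x\vee y>2$, in which case automatically $x+y>1$, only part~(ii) of that theorem is ever needed. I would treat separately the two regimes $0<x\le2$, where the right-hand side of \eqref{aa} is $\simeq1$, and $x>2$, where it is a power of $x$; in both regimes the case $p=\infty$ is handled via $\|K^{\a,\s}_\infty(x,\cdot)\|_{L^\infty(d\mu_\a)}=\essup_{y}K^{\a,\s}_\infty(x,y)$ --- the hypotheses on $p$ and $\s$ being precisely the integrability thresholds forced by the behaviour of the kernel across its diagonal.

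For $x>2$ one has $K^{\a,\s}_\infty(x,\cdot)=K^{\a,\s}(x,\cdot)$ on all of $\mathbb{R}_+$, so I would split $\int_0^\infty d\mu_\a(y)$ over $\{y\le x/2\}$, $\{x/2<y<2x\}$ and $\{y\ge2x\}$. On the first and third sets $|x-y|\simeq x+y\simeq\max(x,y)\ge x$, so by Theorem~\ref{thm:conv_ker}(ii) the kernel carries a factor $\exp(-c\max(x,y)^2)$; integrating, these contributions are exponentially small in $x$ and are dominated by the middle one (and, being non-negative, they are harmless for the lower bound). On the middle set $y\simeq x$, whence $(x+y)^{-2\a-1}\simeq x^{-2\a-1}$ and the density $y^{2\a+1}\simeq x^{2\a+1}$; substituting $u=|x-y|$ and then $v=ux$ reduces the essential $y$-integral to $x^{-p(2\s-1)-1}\int_0^{\infty}v^{p(2\s-1)}e^{-cpv}\,dv$ when $\s<1/2$, with the evident logarithmic variant for $\s=1/2$ and the $(x+y)^{p(1-2\s)}$ variant for $\s>1/2$. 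This $v$-integral converges precisely when $p(2\s-1)>-1$, i.e.\ $p<\tfrac{1}{1-2\s}$ (no restriction if $\s\ge1/2$), and collecting the powers of $x$ yields $\|K^{\a,\s}_\infty(x,\cdot)\|_{L^p(d\mu_\a)}\simeq x^{-2\s+2\a(1/p-1)}$; the matching lower bound comes from the same computation, keeping only the middle set and using the lower bound implicit in ``$\simeq\simeq$''. For $p=\infty$ the supremum is, up to constants, attained at $y\simeq x$: it is comparable to $x^{-2\s-2\a}$ when $\s>1/2$, while for $\s\le1/2$ the factor $|x-y|^{2\s-1}$ (respectively the logarithm when $\s=1/2$) is unbounded near $y=x$, forcing the norm to be $+\infty$, as in \eqref{bb}.

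For $0<x\le2$ the right-hand side of \eqref{aa} is $\simeq1$, and here $K^{\a,\s}_\infty(x,\cdot)$ is supported in $\{y>2\}$. The lower bound is immediate: restricting the integral to, say, $y\in[10,11]$ one has $|x-y|\simeq x+y\simeq1$ uniformly for $x\in(0,2]$, so $K^{\a,\s}(x,y)\simeq1$ there and the norm is $\gtrsim1$. For the upper bound I would again use Theorem~\ref{thm:conv_ker}(ii): for $y\ge4$ one has $|x-y|\simeq x+y\simeq y$, so the factor $\exp(-cy^2)$ makes $\int_{y\ge4}K^{\a,\s}(x,y)^p\,d\mu_\a(y)$ a finite constant uniformly in $x\le2$, while on $\{2<y<4\}$ the kernel is controlled by a factor with a singularity of order $|x-y|^{2\s-1}$ (respectively a logarithmic singularity when $\s=1/2$), whose $p$-th power integrates to a finite quantity exactly when $\s>1/2$ or $p<\tfrac{1}{1-2\s}$; since $|x-y|=y-x\ge y-2$ there and $2\s-1\le0$, this bound is uniform in $x\in(0,2]$. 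Combining the two regimes and noting that $1\vee x\simeq1$ for $1<x\le2$ yields \eqref{aa} for all $x>0$. Finally, for \eqref{bb} fix $\s\le1/2$, $p\ge\tfrac{1}{1-2\s}$ (so $p=\infty$ when $\s=1/2$) and $x>4$; then $y=x$ lies in the support and $\{|x-y|<x/2\}\subset\{y>2\}$. On the sub-window $\{|x-y|\lesssim1/x\}$ the exponential factor in Theorem~\ref{thm:conv_ker}(ii) is bounded below, and the lower bound there gives $K^{\a,\s}(x,y)\gtrsim x^{-2\a-1}|x-y|^{2\s-1}$, respectively $K^{\a,\s}(x,y)\gtrsim x^{-2\a-1}\log\frac{1}{|x-y|(x+y)}$ when $\s=1/2$. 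Since $\int_{\{|x-y|<1/x\}}|x-y|^{p(2\s-1)}\,dy=\infty$ whenever $p(2\s-1)\le-1$ --- and the logarithm is unbounded, forcing the $L^\infty$ norm to be infinite when $\s=1/2$ --- the norm $\|K^{\a,\s}_\infty(x,\cdot)\|_{L^p(d\mu_\a)}$ is infinite, which is \eqref{bb}. The one point that really needs care throughout is the emergence of the critical exponent $\tfrac{1}{1-2\s}$ from the integrability of $|x-y|^{p(2\s-1)}$ across the diagonal, together with checking that each lower bound is uniform in the relevant range of $x$; the rest is routine bookkeeping on top of Theorem~\ref{thm:conv_ker}.
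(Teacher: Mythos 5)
Your argument is correct and follows essentially the same route as the paper: everything is read off Theorem \ref{thm:conv_ker}\,(ii), the decisive contribution comes from the diagonal band $y\simeq x$, and the substitution $v=|x-y|\,x$ produces both the power $x^{-2\s+2\a(1/p-1)}$ and the critical threshold $p<\frac{1}{1-2\s}$. The only (harmless) difference is organizational: you cover all of $x\in(0,2]$ and $x>2$ by direct uniform estimates, handling the corner singularity at $x=y=2$ via $y-x\ge y-2$, whereas the paper only computes for $x<1$ and $x>4$ and disposes of the intermediate range by a continuity argument.
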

Actually, only \eqref{aa} will be used in the sequel.  However, we include also \eqref{bb} to show that
\eqref{aa} is optimal in the sense of the range of admissible parameters.

\begin{proof}[{Proof of Lemma \ref{lem:ass}}]
By Theorem \ref{thm:conv_ker}, $K^{\a,\s}_\infty(x,y)$ satisfies the estimates of 
Theorem \ref{thm:conv_ker} (ii) outside the square $0 < x,y \le 2$, and vanishes inside this square.
Therefore, it is convenient to consider separately the cases $\s<1\slash2$, $\s=1\slash2$ and $\s>1\slash2$. 
In what follows we treat the case $\s<1\slash2$ leaving a similar analysis for the remaining cases to the
reader.
We only  mention that in the case $\s=1\slash2$ it is convenient to split further the kernel according to
the summands in the factor $1+\log^+ \frac{1}{|x-y|(x+y)}$. Then the part related to the $1$ 
can be included into the discussion of the case $\s<1\slash2$ to give \eqref{aa}, while the part 
coming from the $\log^+$ does not make worse the upper bound in \eqref{aa} and is decisive for \eqref{bb}. 
Finally, we observe that considering $0< x< 1$ and $x>4$ is enough for the proof of \eqref{aa} 
since for $1 \le p < \infty$ each of the two functions
$$
x \mapsto \int_a^\infty f^{\s,\a}(x,y)^py^{2\a+1}\,dy, \qquad a=0,2,
$$
where $f^{\s,\a}(x,y)$ denotes the expression on the right-hand side of ``$\simeq\simeq$'' 
in Theorem \ref{thm:conv_ker} (ii), is continuous on $(0,\infty)$; as for $p=\infty$,
the same is true for $x \mapsto \sup_{y>a}f^{\s,\a}(x,y)$, $a=0,2$, provided that $\s>1/2$. 
This may be checked in detail with the aid of the dominated convergence theorem when $p < \infty$,
or directly otherwise. 

Let $\s<1\slash2$. In view of Theorem \ref{thm:conv_ker} (ii),
$$
K_{\infty}^{\a,\s}(x,y) \simeq\simeq \chi_{\{x \vee y > 2\}} (x+y)^{-2\a-1}|x-y|^{2\s-1}
		\exp\big( -c|x-y|(x+y)\big).
$$
Therefore, if $0<x<1$, then
$$
\int_2^\infty K^{\a,\s}_\infty(x,y)^p\,y^{2\a+1}dy \simeq \simeq 
\int_2^\infty y^{(2\a+1)(1-p)+(2\s-1)p}\exp\big(-cpy^2\big)\,dy \simeq 1
$$
for $p < \infty$, and
$$
\sup_{y>2} K^{\a,\s}_\infty(x,y)\simeq \simeq \sup_{y>2} y^{2\s-2\a-2}\exp(-cy^2) \simeq 1.
$$
Thus \eqref{aa} for $x<1$ follows.
If $x>4$, then for $p < \frac{1}{1-2\s}$ and for the decisive interval $(x\slash2,3x\slash2)$ we have
\begin{align*}
\int_{x\slash2}^{3x\slash2} K^{\a,\s}_\infty(x,y)^p\,y^{2\a+1}dy &
\simeq \simeq x^{(2\a+1)(1-p)}\int_{x\slash2}^{3x\slash2} \exp(-cpx|x-y|)|x-y|^{(2\s-1)p}\,dy \\
&= 2  x^{-2\s p+2\a(1-p)}\int_{0}^{x^2\slash2} \exp(-cpu)u^{(2\s-1)p}\,du\\
&\simeq  x^{-2\s p+2\a(1-p)}.
\end{align*}
Notice that the assumption imposed on $p$ guarantees convergence of the last integral.
Checking that the relevant integrals over $(0,x/2)$ and $(3x/2,\infty)$ are controlled by
$x^{-2\s p + 2\a(1-p)}$ is straightforward. Now \eqref{aa} follows.

If $\frac{1}{1-2\s} \le p < \infty$, then the above argument leads also to \eqref{bb}. 
Finally, we have
\begin{align*}
\|K^{\a,\s}_\infty(x,\cdot)\|_{\infty} & \ge \essup_{x\slash2<y<3x\slash2} K^{\a,\s}_\infty(x,y) \\
& \simeq \simeq x^{-2\a-1}\essup_{x\slash2<y<3x\slash2} \exp(-cx|x-y|)|x-y|^{2\s-1}=\infty
\end{align*}
which justifies \eqref{bb} for $p=\infty$. 
\end{proof}

\begin{proof}[{Proof of Lemma \ref{lem:lag_glob}}] 
The structure of the proof is as follows. 
The upper estimate of Lemma~\ref{lem:ass} readily enables us to establish $L^p-L^1$ and $L^1-L^q$ 
boundedness of ${I}^{\a,\s}_\infty$ for the admissible $p$ and $q$. 
This, together with a duality argument based on the symmetry of the kernel,
$K^{\a,\s}_\infty(x,y)=K^{\a,\s}_\infty(y,x)$, and the Riesz-Thorin interpolation theorem, gives 
$L^p-L^q$ bounds for $p$ and $q$ satisfying
$$
\frac{1}p-\frac{\s}{\eta} < \frac{1}q < \frac{1}p + \frac{\s}{\a+1},
$$
where the first inequality should be replaced by a weak one in case $\eta > 1/2$.
The case when $\s < \eta ={1}/2$ and $\frac{1}p-\frac{\s}{\eta}=\frac{1}q$, $2\s < \frac{1}p < 1$,
is more subtle and will be treated by different methods. Finally, the lack of $L^p-L^q$ boundedness
for the relevant $p$ and $q$ will be shown by giving explicit counterexamples.
To simplify the notation,
in what follows $\|\cdot\|_p$ denotes the norm in the Lebesgue space $L^p(\mathbb{R}_+,d\mu_{\a})$.

The $L^p-L^1$ boundedness of ${I}^{\a,\s}_\infty$ holds for
$$
p\in 
	\begin{cases}
		[1,\infty], & \s>\a+1,\\
		[1,\frac{\a+1}{\a+1-\s}), & \s\le\a+1.
	\end{cases}
$$
Indeed, by H\"older's inequality we have
$$
\|{I}^{\a,\s}_\infty f\|_1\le \|f\|_p\big\|\|K^{\a,\s}_\infty(\cdot,y)\|_1\big\|_{p'}
$$
(here and elsewhere we use the convention that the outer norms are taken with respect to the $y$ variable)
and the assertion follows provided that $\big\|\|K^{\a,\s}_\infty(\cdot,y)\|_1\big\|_{p'}<\infty$. 
If $p=1$ this is the case for any $\s>0$ since
$$
\big\|\|K^{\a,\s}_\infty(\cdot,y)\|_1\big\|_{\infty} =\essup_{y>0}\|K^{\a,\s}_\infty(\cdot,y)\|_1 
\lesssim \sup_{y>0}(1\vee y)^{-2\s}<\infty.
$$
Similarly, for $1<p\le\infty$,
$$
\big\|\|K^{\a,\s}_\infty(\cdot,y)\|_1\big\|^{p'}_{p'}
\lesssim \int_0^\infty(1\vee y)^{-2\s p'}y^{2\a+1}\,dy<\infty,
$$
provided that $-2\s p'+2\a+1<-1$, and this happens if $p$ satisfies the imposed restrictions.

The $L^1-L^q$ boundedness of ${I}^{\a,\s}_\infty$ holds for
$$
q\in 
	\begin{cases}
		[1,\infty], & \s>1/2,\\
		[1,\frac1{1-2\s}), & \s\le1/2,
	\end{cases}
	\qquad {\rm or} \qquad
	q\in 
	\begin{cases}
		[1,\infty], & \s\ge-\a,\\
		[1,\frac{\a}{\a+\s}], & \s<-\a,
	\end{cases}
$$
when $\a\ge-1/2$ or $-1<\a<-1/2$, respectively. Indeed, by Minkowski's integral inequality 
(naturally extended to the case $q=\infty$), we get
$$
\|{I}^{\a,\s}_\infty f\|_q\le \|f\|_1\big\|\|K^{\a,\s}_\infty(\cdot,y)\|_q\big\|_{\infty}
$$
and the assertion follows provided that $\big\|\|K^{\a,\s}_\infty(\cdot,y)\|_q\big\|_{\infty}<\infty$. 
For $q=\infty$ this is the case if either $\a\ge-1/2$ and $\s>1/2$,  
or $-1<\a<-1/2$ and $\s\ge-\a$, since then
$$
\big\|\|K^{\a,\s}_\infty(\cdot,y)\|_\infty\big\|_{\infty}
\lesssim \sup_{y>0}(1\vee y)^{-2(\s+\a)}<\infty.
$$
On the other hand, for $1\le q<\infty$ in case $\s>1/2$, or for $1\le q<\frac{1}{1-2\s}$ in case $\s\le1/2$ 
(so that Lemma \ref{lem:ass} can be applied),
$$
\big\|\|K^{\a,\s}_\infty(\cdot,y)\|_q\big\|_{\infty}
\lesssim \sup_{y>0}(1\vee y)^{-2\s+2\a(1/q-1)}<\infty,
$$
provided that $\a(\frac{1}q-1)\le\s$, and this happens if $q$ satisfies the imposed restrictions. 

We now use the fact that, due to the symmetry of the kernel and a duality argument, 
$L^p-L^q$ boundedness of $I^{\a,\s}_\infty$ for some $1 \le p,q< \infty$ implies
$L^{q'}-L^{p'}$ boundedness of $I^{\a,\s}_\infty$. This allows us to infer from the results 
already obtained that  ${I}^{\a,\s}_\infty$ is $L^\infty-L^q$ bounded provided that
$$
q\in 
	\begin{cases}
		(1,\infty], & \s>\a+1,\\
		(\frac{\a+1}\s,\infty], & \s\le\a+1,
	\end{cases}
$$
and $L^p-L^\infty$ bounded provided that
$$
p\in 
	\begin{cases}
		(1,\infty], & \s>1/2,\\
		(\frac1{2\s},\infty], & \s\le1/2,
	\end{cases}	
	\qquad {\rm or} \qquad
	p\in 
	\begin{cases}
		(1,\infty], & \s\ge-\a,\\
		[-\frac{\a}{\s},\infty], & \s<-\a,
	\end{cases}
$$
when $\a\ge-1/2$ or $-1<\a<-1/2$, respectively. Using the Riesz-Thorin interpolation theorem
we conclude $L^p-L^q$ boundedness of $I^{\a,\s}_{\infty}$ in all the relevant cases, except for the one
when
\begin{equation} \label{ccc}
\s < \eta =\frac{1}{2} \quad \textrm{and} \quad \frac{1}p-\frac{\s}{\eta}=\frac{1}q \quad \textrm{and} \quad
	2\s < \frac{1}p < 1.
\end{equation}

To finish proving positive results of the lemma, we consider $\s,\a, p$ and $q$ satisfying \eqref{ccc};
in particular, now $\a \ge -1/2$. We claim that $I_{\infty}^{\a,\s}$ is $L^p-L^q$ bounded. Observe that
$$
K_{\infty}^{\a,\s}(x,y) \lesssim 
 \chi_{\{x \le 1, y> 2\}}y^{2\s-2(\a+1)}+ \chi_{\{x>2, y \le 1\}} x^{2\s-2(\a+1)}
 + \chi_{\{x,y > 1\}} (x+y)^{-2\a-1}|x-y|^{2\s-1}.
$$
By means of H\"older's inequality, it is straightforward to check that the first two terms here 
define $L^p-L^q$ bounded integral operators.
Thus our task reduces to showing that the integral operator
$$
U^{\a,\s}f(x) = \int_1^{\infty} (x+y)^{-2\a-1} |x-y|^{2\s-1} f(y)\, d\mu_{\a}(y), \qquad x > 1,
$$
satisfies the desired mapping property with respect to the measure space $((1,\infty),d\mu_{\a})$.
Since $\mu_{\a}(B(x,r)) \simeq r (x+r)^{2\a+1}$, $r>0$, $x>1$ (see \cite[Proposition 3.2]{NoSt0}; 
here the balls $B(x,r)$ are understood in the sense of the space of homogeneous type 
$((1,\infty),|\cdot|,d\mu_{\a})$), we have
\begin{equation} \label{t9}
U^{\a,\s}f(x) \simeq \int_1^{\infty} \frac{|x-y|^{2\s}}{\mu_{\a}(B(x,|x-y|))} f(y)\, d\mu_{\a}(y),
	\qquad x > 1.
\end{equation}
Integral operators of this form were investigated in \cite{AM}, among others. In particular,
taking into account that the estimate $\mu_{\a}(B(x,r)) \gtrsim r$ holds uniformly in $r>0$ and $x>1$,
we can apply \cite[Corollary 5.2]{AM} (specified to $n=1$ and $w\equiv 1$) to the operator defined
by the right-hand side in \eqref{t9}. This gives the desired conclusion for $U^{\a,\s}$.

Passing to the negative results, we must prove the following three items.
\begin{itemize}
\item[(a)]
${I}^{\a,\s}_\infty$ is not $L^p-L^q$ bounded when $\frac{1}p +\frac{\s}{\a+1}\le\frac{1}q$ and $\s\le\a+1$.
\item[(b)] 
${I}^{\a,\s}_\infty$ is not $L^p-L^q$ bounded when $\frac{1}q<\frac{1}p -\frac{\s}{\eta}$ and $\s<\eta$.
\item[(c)]  ${I}^{\a,\s}_\infty$ is not $L^p-L^q$ bounded for $(\frac{1}p,\frac{1}q)=(2\s,0)$ 
and $(\frac{1}p,\frac{1}q)=(1,1-2\s)$ when $\s\le\eta=1/2$.
\end{itemize}
To show (a), 
consider first $p=\infty$. If $\frac{\s}{\a+1} \le \frac{1}q$ then, by Lemma \ref{lem:ass},
$$
\|I_{\infty}^{\a,\s}\boldsymbol{1}\|_q^q \simeq \int_0^{\infty} (1\vee x)^{-2\s q} x^{2\a+1}\, dx = \infty,
$$
hence ${I}^{\a,\s}_\infty$ is not $L^\infty-L^q$ bounded. 
To treat the case $p< \infty$, we may assume in addition that $\frac{1}q = \frac{1}p + \frac{\s}{\a+1}$,
because of an interpolation argument.
Let $f(y)=\chi_{\{y>e\}}y^{-2(\a+1)/p}(\log y)^{-1/p-\s/(\a+1)}$. Then
$$
\|f\|_p^p = \int_e^\infty (\log y)^{-1-\s p/(\a+1)}\frac{dy}y < \infty,
$$
so $f\in L^p(d\mu_\a)$. We claim that ${I}^{\a,\s}_\infty f\notin L^q(d\mu_\a)$.
Indeed, using the lower bound from Theorem~\ref{thm:conv_ker}~(ii), for $x>2e$ we obtain
\begin{align*}
I_{\infty}^{\a,\s}f(x) & \ge f(x) \int_{x/2}^x K_{\infty}^{\a,\s}(x,y)\, d\mu_{\a}(y) \\
& \gtrsim f(x) \int_{x/2}^x \left\{ 
	\begin{array}{ll}
		(x-y)^{2\s-1}, & \s < 1/2 \\
		1+\log^+\frac{1}{x(x-y)}, & \s=1/2 \\
		x^{1-2\s}, & \s > 1/2
	\end{array}
	\right\}
	\exp\big(-cx (x-y)\big) \, dy \\
& \simeq x^{-2\s}f(x),	
\end{align*}
where the last relation follows by the change of variable $y=x-u/x$. Consequently,
$$
{I}^{\a,\s}_\infty f(x)\gtrsim  x^{-2(\a+1)/p-2\s}(\log x)^{-1/p-\s/(\a+1)}
=x^{-2(\a+1)/q}(\log x)^{-1/q},\qquad x>2e,
$$
and the claim follows. 

To justify (b), we fix $p$ and $q$ satisfying the assumed conditions and 
first consider the case $\a \ge -1/2$. This means that $\eta = 1/2$ and $\s < 1/2$.
Then, by Theorem \ref{thm:conv_ker} (ii),
\begin{equation} \label{a5}
K_{\infty}^{\a,\s}(x,y) \gtrsim |x-y|^{2\s-1}, \qquad x,y \in (2,4).
\end{equation}
Let $f(y) = \chi_{(2,3)}(y)\, (3-y)^A$, where we take $A = -\frac{1}p + \varepsilon$, with $\varepsilon$
satisfying $0 < \varepsilon < \frac{1}p-2\s-\frac{1}q$. Clearly, $f \in L^p(d\mu_{\a})$ since
$$
\|f\|_p^p \simeq \int_2^3 (3-y)^{-1+\varepsilon p}\, dy < \infty.
$$
We will show that $I_{\infty}^{\a,\s}f \notin L^q(d\mu_{\a})$. Changing the variable of integration, we get
$$
I_{\infty}^{\a,\s}f(x) \gtrsim \int_2^3 (x-y)^{2\s-1} (3-y)^A\, dy = 
	(x-3)^{2\s+A} \int_0^{1/(x-3)} \frac{u^A\, du}{(1+u)^{1-2\s}}, \qquad x \in (3,4).
$$
Here $1/(x-3)>1$, so the last integral is larger than a positive constant. Thus
$$
I_{\infty}^{\a,\s}f(x) \gtrsim (x-3)^{2\s+A}, \qquad x \in (3,4).
$$
Since $2\s+A<0$, we see that $I_{\infty}^{\a,\s}f$ is not in $L^{\infty}$. Neither it belongs to
$L^q(d\mu_{\a})$ when $q< \infty$, because $(2\s+A)q<-1$ and, consequently,
$$
\|I_{\infty}^{\a,\s}f\|_q^q \gtrsim \int_3^4 (x-3)^{(2\s+A)q}\, dx = \infty.
$$

The case $\a < -1/2$ is slightly more subtle. Now $\eta = -\a$ and we must show that $I_{\infty}^{\a,\s}$
is not $L^p-L^q$ bounded whenever $\frac{1}q < \frac{1}p + \frac{\s}{\a}$. Let $n$ be large. Observe that 
in view of Theorem~\ref{thm:conv_ker}~(ii), for $x,y\in (n,n+1/n)$ we have
$$
K_{\infty}^{\a,\s}(x,y) \gtrsim n^{-2\a-1} \begin{cases}
		|x-y|^{2\s-1}, & \s < 1/2,\\
		\log\frac{2}{n|x-y|}, & \s=1/2,\\
		n^{1-2\s}, & \s > 1/2,
	\end{cases}
$$
uniformly in $x,y$ and $n$. Take $f_n = \chi_{(n,n+1/n)}$. Then $\|f_n\|_p \simeq n^{2\a/p}$. Further,
assume that $x \in (n,n+1/n)$. If $\s > 1/2$, then
$$
I_{\infty}^{\a,\s}f_n(x) \gtrsim n^{-2\a-2\s}\int_n^{n+1/n} y^{2\a+1}\, dy \simeq n^{-2\s}.
$$
If $\s < 1/2$, then
$$
I_{\infty}^{\a,\s}f_n(x) \gtrsim
	\int_n^{n+1/n} |x-y|^{2\s-1}\, dy \simeq \int_n^{n+1/n} (y-n)^{2\s-1}\, dy \simeq n^{-2\s}.
$$
For $\s=1/2$ we also have
$$
I_{\infty}^{\a,\s}f_n(x) \gtrsim \int_n^{n+1/n} \log\frac{2}{n|x-y|} \, dy
	\simeq \int_n^{n+1/n} \log\frac{2}{n(y-n)}\, dy \simeq n^{-1} = n^{-2\s}.
$$
Thus, in all the cases, 
$$
\|I_{\infty}^{\a,\s}f_n\|_q^q \gtrsim \int_n^{n+1/n} n^{-2\s q} y^{2\a+1}\, dy \simeq n^{-2\s q + 2\a}.
$$
Consequently, with $q=\infty$ also admitted,
$$
\frac{\|I_{\infty}^{\a,\s}f_n\|_q}{\|f_n\|_p} \gtrsim n^{-2\s-2\a(1/p-1/q)}.
$$
Since $-2\s-2\a(\frac{1}p-\frac{1}q)>0$, the norm ratio is not bounded as $n \to \infty$.

Proving (c), we begin with the extreme case $\s=\eta=1/2$ and show that ${I}^{\a,\s}_\infty$ 
is not $L^1-L^\infty$ bounded. Let $f_n = \chi_{(3-1/n,3)}$ with $n$ large. Then
$\|f_n\|_1 \simeq n^{-1}$ and by Theorem \ref{thm:conv_ker} (ii)
$$
\|I_{\infty}^{\a,\s}f_n\|_{\infty} \gtrsim \essup_{3<x<3+1/n} \int_{3-1/n}^3\log\frac{1}{x-y}\, dy
= \int_{3-1/n}^3 \log\frac{1}{3-y}\, dy \simeq \frac{1}n \log n,
$$
and the conclusion follows by letting $n \to \infty$.

Next assume that $\s<\eta=1/2$. By duality, it is enough to check that $I_{\infty}^{\a,\s}$ is not
bounded from $L^{1/(2\s)}(d\mu_{\a})$ to $L^{\infty}$. Observe that in this situation \eqref{a5} holds.
Take $f(y) = \chi_{(2,3)}(y)/((3-y)^{2\s}\log\frac{2}{3-y})$. Then $f \in L^{1/(2\s)}(d\mu_{\a})$, but
$$
\|I_{\infty}^{\a,\s}f\|_{\infty} \gtrsim 
	\essup_{3<x<4} \int_2^3 \frac{|x-y|^{2\s-1}\, dy}{(3-y)^{2\s}\log\frac{2}{3-y}} =
	\int_2^3 \frac{dy}{(3-y)\log\frac{2}{3-y}} = \infty.
$$
This finishes the verification of item (c).

The proof of Lemma \ref{lem:lag_glob} is complete.
\end{proof}

\subsection{$\boldsymbol{L^p-L^q}$ estimates in the Laguerre setting of Hermite type}
Similarly to Theorem~\ref{thm:LpLqlag}, Theorem~\ref{thm:lag_H}
follows readily from the two lemmas below describing $L^p-L^q$ behavior
of two auxiliary operators with non-negative kernels which $I_{H}^{\a,\s}$ splits into. More precisely,
we split the operator $I^{\a,\s}_{H}$ according to the kernel splitting
\begin{align*}
K^{\a,\s}_{H}(x,y) & = 
	\chi_{\{x \le 2, y \le 2\}} K^{\a,\s}_{H}(x,y) + \chi_{\{x \vee y > 2\}}K^{\a,\s}_{H}(x,y) \\
    & \equiv K^{\a,\s}_{H,0}(x,y) + K^{\a,\s}_{H,\infty}(x,y)
\end{align*}
and denote the resulting integral operators by $I^{\a,\s}_{H,0}$ and $I^{\a,\s}_{H,\infty}$, respectively.
\begin{lem} \label{lem:lag_loc_H}
Let $\a > -1$, $\s>0$ and $1 \le p,q \le \infty$.  
\begin{itemize}
\item[(a)] If $\a \ge -1/2$, then ${I}^{\a,\s}_{H,0}$ is bounded from $L^p(dx)$ to $L^q(dx)$ if and only if
$$
\frac{1}p - 2\s\le \frac{1}q 
\quad \textrm{and} \quad
	\bigg(\frac{1}p,\frac{1}q\bigg) \notin \big\{(2\s,0), (1,1-2\s)\big\}.
$$
\item[(b)] Let $\a < -1/2$. Then $L^p(dx)\subset {\rm Dom}\, I_{H,0}^{\a,\s}$ if and only if
$p > 2/(2\a+3)$. In this case
${I}^{\a,\s}_{H,0}$ is bounded from $L^p(dx)$ to $L^q(dx)$ if and only if
$$
\frac{1}p - 2\s \le \frac{1}q  \quad  \textrm{and} \quad \frac{1}q > -\a-\frac{1}2.
$$
\end{itemize}
\end{lem}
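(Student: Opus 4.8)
The plan is to argue as in the proof of Lemma~\ref{lem:lag_loc}, now exploiting the identity $K^{\a,\s}_{H,0}(x,y)=(xy)^{\a+1/2}K^{\a,\s}_{0}(x,y)$ together with the fact that, by Theorem~\ref{thm:conv_ker}~(i), $K^{\a,\s}_{H,0}$ satisfies sharp two-sided estimates on the square $(0,2]^2$, outside which it vanishes. First I would observe that near the origin these estimates identify $K^{\a,\s}_{H,0}(x,y)$, up to multiplicative constants, with the potential kernel of the Fourier-Bessel system of order $\a$ (whose underlying measure is also $dx$), while away from the origin $K^{\a,\s}_{H,0}$ is comparable to a localized Riesz kernel of order $2\s$; the positive and negative $L^p-L^q$ results of \cite{NR} then transfer to $I^{\a,\s}_{H,0}$ after a routine treatment of the part supported away from the origin. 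I also sketch below a self-contained route, which I would actually carry out.

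For the positive results I would split $K^{\a,\s}_{H,0}$ on $(0,2]^2$ into its ``diagonal'' part, supported on $\{x/2<y<2x\}$, and the off-diagonal remainder. On the diagonal, where $x\simeq y$, cancelling $(xy)^{\a+1/2}\simeq(x+y)^{2\a+1}$ shows that $K^{\a,\s}_{H,0}(x,y)$ is comparable to $|x-y|^{2\s-1}$ for $\s<1/2$, to $1+\log^+\frac{1}{|x-y|}$ for $\s=1/2$ and to $x^{2\s-1}$ for $\s>1/2$, plus the origin-singular terms $x^{2\a+1}\chi_{\{\s>\a+1\}}$ and $x^{2\a+1}\chi_{\{\s=\a+1\}}\log\frac1x$; off the diagonal, say $x\ge2y$, one has $x+y\simeq|x-y|\simeq x$, so $K^{\a,\s}_{H,0}(x,y)$ equals $y^{\a+1/2}$ times a power of $x$ (with a harmless logarithm in the borderline cases). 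The genuinely fractional pieces — those with $|x-y|^{2\s-1}$ or the $\s=1/2$ logarithm, hence present only for $\s\le1/2$ — are dominated by a Riesz kernel of order $2\s$ times the power weight $(xy)^{\a+1/2}$ on $(0,2]$, and I would dispose of them by a Stein-Weiss (weighted Hardy-Littlewood-Sobolev) inequality on the critical line $\frac1q=\frac1p-2\s$ and by interpolation and inclusion of $L^p$-spaces on a bounded domain elsewhere; all the remaining terms carry explicit powers of $x$, $y$, or $x+y$ and are handled by H\"older's inequality. The role of $\a$ enters through the integrability near $0$ of the weights $y\mapsto y^{(\a+1/2)p'}$ and $x\mapsto x^{(\a+1/2)q}$: for $\a\ge-1/2$ they are bounded, so no restriction beyond $\frac1q\ge\frac1p-2\s$ arises; for $\a<-1/2$, integrability of $x^{(\a+1/2)q}$ forces $\frac1q>-\a-\frac12$, and integrability of $y^{(\a+1/2)p'}$ forces $p>2/(2\a+3)$, below which a Fubini argument produces $f\in L^p$ with $I^{\a,\s}_{H,0}f\equiv\infty$ on a set of positive measure, yielding the domain statement.

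The necessity part is by explicit test functions. To see that $\frac1q\ge\frac1p-2\s$ is necessary, and that the endpoints $(\frac1p,\frac1q)=(2\s,0)$ and $(1,1-2\s)$ fail when $\a\ge-1/2$, I would test on characteristic functions of intervals shrinking to an interior point $x_0\in(1,2)$, where $(xy)^{\a+1/2}\simeq1$ and $K^{\a,\s}_{H,0}(x,y)\gtrsim|x-y|^{2\s-1}$, thereby reducing matters to the known sharpness of Hardy-Littlewood-Sobolev on a bounded interval. When $\a<-1/2$, testing on $f=\chi_{(0,1)}$ and using the pointwise bound $I^{\a,\s}_{H,0}f(x)\gtrsim x^{\a+1/2}$ for small $x$ (immediate from the lower estimate of Theorem~\ref{thm:conv_ker}~(i)) forces $\frac1q>-\a-\frac12$, while testing on an $f$ concentrated near $0$ with $\int_0 y^{\a+1/2}|f|=\infty$ gives the ``only if'' direction of the domain claim.

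I expect the main obstacle to be the interplay, precisely on the critical line $\frac1q=\frac1p-2\s$, between the fractional singularity $|x-y|^{2\s-1}$ and the origin factor $(xy)^{\a+1/2}$ — this is what separates the two regimes. For $\a\ge-1/2$ the weight is harmless, $I^{\a,\s}_{H,0}$ behaves like a pure order-$2\s$ fractional integral on a bounded interval, and the classical endpoint failures at $p=1$ and at $q=\infty$ survive, giving the two excluded points of (a). For $\a<-1/2$ the constraint $\frac1q>-\a-\frac12$ automatically forces $1<p$ and $q<\infty$, so the critical line is genuinely attained; establishing strong-type, rather than merely weak-type, boundedness there — via a Stein-Weiss / Adams-type estimate that keeps track of the singular weight — is the most delicate step.
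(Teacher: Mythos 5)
Your route is genuinely different from the paper's, which disposes of this lemma in two lines by comparing $K^{\a,\s}_{H,0}$ with the Jacobi trigonometric \emph{function} potential kernel and invoking \cite[Theorem 2.4]{NR}, exactly as Lemma \ref{lem:lag_loc} invokes \cite[Theorem 2.3]{NR} (your first paragraph, with Fourier--Bessel in place of Jacobi, is essentially that argument). Your self-contained alternative is fine for part (a) and for all the necessity claims, but it has a concrete gap precisely at the step you single out as delicate: the critical line $\frac1q=\frac1p-2\s$ when $\a<-1/2$. Dominating the fractional piece by $|x-y|^{2\s-1}(xy)^{\a+1/2}$ discards the factor $(x+y)^{-2\a-1}$, which for $\a<-1/2$ vanishes at the origin and is exactly what keeps the kernel tame there. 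The Stein--Weiss inequality for the kernel $|x-y|^{2\s-1}x^{-\beta}y^{-\gamma}$ with $\beta=\gamma=-(\a+1/2)>0$ lives on the line $\frac1q=\frac1p-2\s+\beta+\gamma=\frac1p-2\s-(2\a+1)$, not on $\frac1q=\frac1p-2\s$; and indeed your dominating operator is genuinely unbounded in part of the claimed range: applied to $\chi_{(0,\varepsilon)}$ it produces a function $\gtrsim x^{2\a+1+2\s}$ near the origin, which fails to lie in $L^q$ whenever $\frac1q\le -2\a-1-2\s$, a condition compatible with $\frac1q>-\a-\frac12$ as soon as $\s<-\a/2-1/4$. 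The repair is to keep the full weight: $(xy)^{\a+1/2}(x+y)^{-2\a-1}=\bigl(xy/(x+y)^2\bigr)^{\a+1/2}\simeq\bigl(\tfrac{\max(x,y)}{\min(x,y)}\bigr)^{-\a-1/2}\le\bigl(\tfrac{x}{y}\bigr)^{-\a-1/2}+\bigl(\tfrac{y}{x}\bigr)^{-\a-1/2}$, and apply Stein--Weiss to each summand with balanced exponents $\beta+\gamma=0$; the constraints $\beta<1/q$ and $\gamma<1/p'$ then produce exactly $\frac1q>-\a-\frac12$ and $\frac1{p'}>-\a-\frac12$ on the correct line.

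A second, smaller, instance of the same problem: the off-diagonal pieces $y^{\a+1/2}g(x)\chi_{\{y<x/2\}}$ are not closed by H\"older on the critical line either — H\"older in $y$ yields $x^{2\s-1/p}=x^{-1/q}$, which is in $L^{q,\infty}(0,2)$ but not in $L^q(0,2)$. You need a weighted Hardy inequality there, or the weak-type bound followed by Marcinkiewicz interpolation; this is precisely the device the paper uses for the analogous global operator $U_1^{\a,\s}$ in the proof of Lemma \ref{lem:lag_glob_H}. With these two repairs your decomposition does go through and yields a proof independent of \cite{NR}; everything else (the kernel computation on $(0,2]^2$, the test functions $\chi_{(0,1)}$ and the shrinking-interval examples for necessity, and the Fubini argument for the domain statement) is correct.
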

\begin{lem} \label{lem:lag_glob_H}
Let $\a > -1$, $\s>0$ and $1 \le p,q \le \infty$.  Then  ${I}^{\a,\s}_{H,\infty}$ 
satisfies the positive $L^p-L^q$ mapping properties stated in Theorem \ref{thm:lag_H} for $I_{H}^{\a,\s}$.

On the other hand, $I_{H}^{\a,\s}$ is not bounded from $L^p(dx)$ to $L^q(dx)$ when 
$$
\frac{1}q \ge \frac{1}p+2\s.
$$
\end{lem}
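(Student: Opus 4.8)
The plan is to prove the positive $L^p$--$L^q$ bounds and the negative assertion separately. For the positive part I would split $K_{H,\infty}^{\a,\s}$ into four pieces and reduce the essential one to Lemma~\ref{lem:lag_glob}; the negative part will follow from an explicit counterexample. Recall that $K_{H,\infty}^{\a,\s}(x,y)=(xy)^{\a+1/2}K^{\a,\s}(x,y)$ is supported in $\{x\vee y>2\}$, where $x+y>1$, so Theorem~\ref{thm:conv_ker}(ii) governs $K^{\a,\s}$ throughout the support. I would decompose $K_{H,\infty}^{\a,\s}$ as a sum of four kernels supported, respectively, on (i) the near-diagonal set $\{x,y>1,\ \frac12<x/y<2\}\cap\{x\vee y>2\}$, on (ii) the rest of $\{x,y>1,\ x\vee y>2\}$, on (iii-a) $\{x>2,\ y\le1\}$, and on (iii-b) $\{x\le1,\ y>2\}$; a routine check shows these regions cover the support.

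On region (i) one has $(xy)^{\a+1/2}(x+y)^{-2\a-1}\simeq1$, hence there $K_{H,\infty}^{\a,\s}(x,y)\simeq\simeq K_\infty^{-1/2,\s}(x,y)$, and after matching the exponential constants the piece-(i) kernel is dominated by $K_\infty^{-1/2,\s}$. Since $d\mu_{-1/2}=dx$, Lemma~\ref{lem:lag_glob} applied with $\a=-1/2$ tells us that $I_\infty^{-1/2,\s}$ is bounded from $L^p(dx)$ to $L^q(dx)$ exactly when $\frac1p-2\s\le\frac1q<\frac1p+2\s$ and $(\frac1p,\frac1q)\notin\{(2\s,0),(1,1-2\s)\}$; consequently the operator with kernel the piece-(i) part of $K_{H,\infty}^{\a,\s}$ is bounded from $L^p(dx)$ to $L^q(dx)$ on (at least) that region. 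The three remaining pieces carry a Gaussian factor in the larger variable and are elementary: on (ii), $|x-y|\simeq\max(x,y)$ and $(xy)^{\a+1/2}(x+y)^{-2\a-1}$ is bounded by a fixed power of $\max(x,y)$, so the kernel is dominated there by $\max(x,y)^{N}\exp(-c\max(x,y)^2)$ and a single application of H\"older's inequality gives $L^p$--$L^q$ boundedness for all $1\le p,q\le\infty$; on (iii-a), Theorem~\ref{thm:conv_ker}(ii) yields $K_{H,\infty}^{\a,\s}(x,y)\lesssim y^{\a+1/2}\exp(-cx^2)$, so the associated (essentially rank-one) operator is bounded from $L^p(dx)$ to $L^q(dx)$ for every $q$ precisely when $y^{\a+1/2}\in L^{p'}(0,1)$ — vacuous for $\a\ge-1/2$, equivalent to $p>2/(2\a+3)$ for $\a<-1/2$ — and, symmetrically, (iii-b) gives an operator bounded for every $p$ precisely when $x^{\a+1/2}\in L^q(0,1)$, i.e.\ $\frac1q>-\a-\frac12$ when $\a<-1/2$.

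Assembling the four contributions: for $\a\ge-1/2$ pieces (ii), (iii-a), (iii-b) impose nothing and one recovers exactly the region of Theorem~\ref{thm:lag_H}(a); for $-1<\a<-1/2$ one acquires in addition $p>2/(2\a+3)$ and $\frac1q>-\a-\frac12$, under which a one-line check shows that the vertices $(2\s,0)$ (there $\frac1q=0\le-\a-\frac12$) and $(1,1-2\s)$ (there $p=1$ violates $p>2/(2\a+3)$) lie automatically outside the region, so the intersection of all four constraints is precisely the region of Theorem~\ref{thm:lag_H}(b). I expect the only genuinely nontrivial ingredient to be the reduction of piece~(i) to Lemma~\ref{lem:lag_glob} (which itself, at the corner of its admissible region, rests on the integral-operator estimate \cite{AM}); everything else is bookkeeping, though one must track the two constraints $p>2/(2\a+3)$ and $\frac1q>-\a-\frac12$ carefully in the case $\a<-1/2$.

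For the negative statement, fix $1\le p,q\le\infty$ with $\frac1q\ge\frac1p+2\s$, so necessarily $q<\infty$, and take $f(y)=\chi_{\{y>e\}}\,y^{-1/p}(\log y)^{-\beta}$, with $\beta>1/p$ arbitrary if $\frac1q>\frac1p+2\s$ and $\beta\in(1/p,1/q]$ if $\frac1q=\frac1p+2\s$; in either case $f\in L^p(dx)$. For large $x$ and $y\in(x/2,x)$ one has $f(y)\simeq f(x)$ and, since $(xy)^{\a+1/2}(x+y)^{-2\a-1}\simeq1$ on that range for every $\a>-1$, Theorem~\ref{thm:conv_ker}(ii) gives $K_H^{\a,\s}(x,y)\simeq\simeq\exp(-c|x-y|x)\times(\s\text{-dependent factor})$; integrating in $y$ over $(x/2,x)$ with the substitution $y=x-u/x$ yields $\int_{x/2}^{x}K_H^{\a,\s}(x,y)\,dy\simeq x^{-2\s}$, whence $I_H^{\a,\s}f(x)\gtrsim x^{-2\s}f(x)=x^{-1/p-2\s}(\log x)^{-\beta}$ for all large $x$. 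Therefore $\|I_H^{\a,\s}f\|_q^q\gtrsim\int^{\infty}x^{-q(1/p+2\s)}(\log x)^{-\beta q}\,dx=\infty$ by the choice of $\beta$, so $I_H^{\a,\s}$ is not bounded from $L^p(dx)$ to $L^q(dx)$ on the whole half-plane $\frac1q\ge\frac1p+2\s$.
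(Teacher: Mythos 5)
Your proof is correct, and it takes a genuinely different route from the paper's in both halves. For the positive part the paper does not isolate the small-variable regions: for $\a\ge-1/2$ it dominates the whole kernel $K^{\a,\s}_{H,\infty}$ by $K^{-1/2,\s}_{\infty}$ (using $(xy)^{\a+1/2}\lesssim (x+y)^{2\a+1}$ globally) and quotes Lemma \ref{lem:lag_glob}; for $\a<-1/2$ it keeps only the near-diagonal comparison \eqref{krel} and then treats the two off-diagonal operators $U_1^{\a,\s}f(x)=\int_0^{x/2}K^{\a,\s}_{H,\infty}(x,y)f(y)\,dy$ and its dual $U_2^{\a,\s}$ by H\"older's inequality, which at the endpoint line $\frac1q=\frac1p-2\s$ yields only a weak-type bound and forces an appeal to the Marcinkiewicz interpolation theorem. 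Your finer four-piece decomposition buys a cleaner endgame: the off-diagonal piece with $x,y>1$ has Gaussian decay in both variables and is bounded for all $p,q$, while the pieces with one variable in $(0,1]$ are genuine product kernels with factor $t^{\a+1/2}\chi_{(0,1)}(t)$, so the constraints $p>2/(2\a+3)$ and $\frac1q>-\a-\frac12$ drop out as exact integrability conditions with no interpolation step. For the negative part you rederive the lower bound $\int_{x/2}^{x}K_H^{\a,\s}(x,y)\,dy\gtrsim x^{-2\s}$ (the paper obtains the same estimate inside part (a) of the proof of Lemma \ref{lem:lag_glob} and transfers it via \eqref{krel}), and your family $y^{-1/p}(\log y)^{-\beta}$ covers the whole region $\frac1q\ge\frac1p+2\s$, including $p=\infty$, in one stroke, whereas the paper reduces to the boundary line by an interpolation argument and handles $p=\infty$ separately through Lemma \ref{lem:ass_H}. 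One cosmetic caveat, shared with the paper: the near-diagonal comparison with $K_{\infty}^{-1/2,\s}$ is a ``$\simeq\simeq$'' with possibly different exponential constants, so one does not literally get pointwise domination by that kernel; what is actually used is that the positive results of Lemma \ref{lem:lag_glob} hold for the model kernel with an arbitrary positive constant $c$ in the exponential, which is clear from its proof.
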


The proof of Lemma \ref{lem:lag_loc_H} uses the sharp description of $L^p-L^q$ boundedness for
the potential operator in the Jacobi trigonometric `function' setting stated in \cite[Theorem 2.4]{NR}
and arguments analogous to those from the proof of Lemma \ref{lem:lag_loc}; 
the first part of Lemma \ref{lem:lag_loc_H} (b) may be verified directly. We omit the details.
To prove Lemma \ref{lem:lag_glob_H} we will mostly appeal to the results obtained in the setting of Laguerre
expansions of convolution type. Essentially, only the case $\a < -1/2$ requires new arguments.
However, we first give an analogue of Lemma \ref{lem:ass}. Although we will use only a part of it, we provide
a full statement for the sake of completeness and, perhaps, reader's curiosity.

\begin{lem} \label{lem:ass_H}
Let $\a > -1$, $\s>0$ and $1 \le p \le \infty$. Then the estimates
$$
\|K^{\a,\s}_{H,\infty}(x,\cdot)\|_p\simeq
\begin{cases}
        x^{\a+1/2}, & x\le1,\\
        x^{-2\s+1-1/p}, & x>1,
    \end{cases}
$$
hold provided that $p$ satisfies $\frac{1}p > 1-2\s$ and, in addition, $\frac{1}p > - \a - 1/2$ in case
$\a < -1/2$. 

Moreover, for the remaining $p$ we have
\begin{equation}  \label{smr}
\|K^{\a,\s}_{H,\infty}(x,\cdot)\|_p=\infty, \qquad x>4.
\end{equation}
\end{lem}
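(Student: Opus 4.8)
The plan is to follow closely the argument proving Lemma~\ref{lem:ass}. By \eqref{linkKH} we have $K^{\a,\s}_{H,\infty}(x,y)=(xy)^{\a+1/2}K^{\a,\s}_\infty(x,y)$, so by Theorem~\ref{thm:conv_ker}~(ii) the kernel $K^{\a,\s}_{H,\infty}(x,y)$ obeys, on $\{x\vee y>2\}$, the estimate obtained by multiplying the right-hand side of ``$\simeq\simeq$'' in Theorem~\ref{thm:conv_ker}~(ii) by $(xy)^{\a+1/2}$, and vanishes on $\{x,y\le 2\}$. As in Lemma~\ref{lem:ass}, I would first reduce matters to the ranges $0<x<1$ and $x>4$: in the range of $p$ for which the stated estimates are claimed one has $(2\s-1)p>-1$ and $(\a+1/2)p>-1$ (the latter automatic when $\a\ge-1/2$), hence there are no local singularities and $x\mapsto\|K^{\a,\s}_{H,\infty}(x,\cdot)\|_p$ (the essential supremum when $p=\infty$) is finite and continuous on $(0,\infty)$, say by dominated convergence; since the claimed bound is $\simeq 1$ on $[1,4]$, it is enough to verify it for $0<x<1$ and for $x>4$. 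I would carry out the computation for $p<\infty$ and treat $p=\infty$ analogously, with essential suprema in place of integrals.

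For $x>4$ the main contribution comes from the diagonal strip $y\in[x/2,2x]$, where $(xy)^{\a+1/2}(x+y)^{-2\a-1}\simeq 1$ and $|x-y|(x+y)\simeq x|x-y|$, so $K^{\a,\s}_{H,\infty}(x,y)\simeq\simeq\exp(-cx|x-y|)$ times $|x-y|^{2\s-1}$, $1+\log^{+}\frac1{x|x-y|}$, or $x^{1-2\s}$, according as $\s<1/2$, $\s=1/2$, $\s>1/2$. Substituting $u=x|x-y|$ turns $\int_{x/2}^{2x}K^{\a,\s}_{H,\infty}(x,y)^p\,dy$ into $x^{(1-2\s)p-1}$ times $\int_0^{x^2/2}\exp(-cpu)g(u)\,du$, with $g(u)$ equal to $u^{(2\s-1)p}$, $(1+\log^{+}\frac1u)^p$, or $1$, respectively. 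As $x\to\infty$ this integral converges to a finite positive limit exactly when $(2\s-1)p>-1$, i.e.\ $\frac1p>1-2\s$ (the logarithm being harmless), and then it stays $\simeq 1$ for all $x>1$; thus the strip contributes $\simeq x^{(1-2\s)p-1}=(x^{-2\s+1-1/p})^p$. When $\frac1p\le 1-2\s$ (so $\s<1/2$) the integral diverges at $u=0$, and since $\exp(-cx|x-y|)$ is bounded below on the strip for each fixed $x$, the lower bound of Theorem~\ref{thm:conv_ker}~(ii) forces $\|K^{\a,\s}_{H,\infty}(x,\cdot)\|_p=\infty$, which gives \eqref{smr} in this case.

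It remains to bound the off-diagonal part $y\in(0,x/2)\cup(2x,\infty)$ for $x>4$. There $x$ and $y$ are non-comparable, $|x-y|(x+y)\simeq(x\vee y)^2$, and $(xy)^{\a+1/2}(x+y)^{-2\a-1}\simeq (x\wedge y)^{\a+1/2}(x\vee y)^{-\a-1/2}$, so, absorbing the bracket in Theorem~\ref{thm:conv_ker}~(ii) into a power of $x\vee y$, one gets $K^{\a,\s}_{H,\infty}(x,y)\lesssim (x\wedge y)^{\a+1/2}(x\vee y)^{N}\exp(-c(x\vee y)^2)$ for a suitable $N=N(\a,\s)$. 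On $(2x,\infty)$ this integrates in $y$ to a quantity $\lesssim x^{(\a+1/2)p}\exp(-c'x^2)\lesssim\exp(-c''x^2)$; on $(0,x/2)$ the relevant integral near the origin is $\int_0^1 y^{(\a+1/2)p}\,dy$, which is finite — so the whole contribution is $\lesssim\exp(-c''x^2)$ — precisely when $(\a+1/2)p>-1$, i.e.\ $\frac1p>-\a-1/2$ (automatic for $\a\ge-1/2$), and diverges (whence \eqref{smr}) when $\a<-1/2$ and $\frac1p\le-\a-1/2$. Since these exponentially small pieces are dominated by the diagonal term, this yields $\|K^{\a,\s}_{H,\infty}(x,\cdot)\|_p\simeq x^{-2\s+1-1/p}$ for $x>4$ in the good range. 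Finally, for $0<x<1$ the kernel is supported in $y>2$, where $|x-y|\simeq x+y\simeq y$ and $K^{\a,\s}_{H,\infty}(x,y)\simeq\simeq x^{\a+1/2}y^{-\a-1/2}(1\vee y)^{M}\exp(-cy^2)$ for a suitable $M=M(\a,\s)$; integrating in $y$ over $(2,\infty)$, where by Theorem~\ref{thm:conv_ker}~(ii) the resulting integral is a finite positive constant, gives $\|K^{\a,\s}_{H,\infty}(x,\cdot)\|_p\simeq x^{\a+1/2}$. Together with the continuity reduction this proves the lemma. The only mildly delicate points, as in Lemma~\ref{lem:ass}, are the bookkeeping of the exponents after the substitution $u=x|x-y|$ and the justification of the continuity reduction; I do not foresee any genuine obstacle.
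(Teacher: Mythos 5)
Your proposal is correct and follows essentially the same route as the paper's own proof: both use \eqref{linkKH} together with Theorem \ref{thm:conv_ker} (ii), reduce to the ranges $0<x<1$ and $x>4$ by a continuity argument as in Lemma \ref{lem:ass}, extract the main term $x^{(1-2\s)p-1}$ from the diagonal strip via the substitution $u=x|x-y|$ (which yields the condition $\frac{1}{p}>1-2\s$), and identify the extra condition $\frac1p>-\a-1/2$ from the integrability of $y^{(\a+1/2)p}$ near the origin. The only difference is presentational: the paper details only the case $\s<1/2$, whereas you treat the three cases of $\s$ uniformly.
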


\begin{proof}
The reasoning relies on the arguments from the proof of Lemma \ref{lem:ass}. We will give some details
for the case $\s< 1/2$ leaving the remaining analysis to the reader.

Let $\s<1\slash2$. In view of \eqref{linkKH} and Theorem \ref{thm:conv_ker} (ii),
$$
K^{\a,\s}_{H,\infty}(x,y)\simeq \simeq \chi_{\{x \vee y >2\}} (xy)^{\a+1/2} 
(x+y)^{-2\a-1}|x-y|^{2\s-1}\exp\big(-c|x-y|(x+y)\big).
$$
Hence, for $x<1$ and $y>2$ we have
$$
K^{\a,\s}_{H,\infty}(x,y)\simeq \simeq x^{\a+1/2}y^{-\a+2\s-3/2}\exp(-cy^2),
$$
while for $x>4$ and $y>0$
\begin{equation*}
K^{\a,\s}_{H,\infty}(x,y)\simeq \simeq
\begin{cases}
        x^{-\a+2\s-3/2}y^{\a+1/2}\exp(-cx^2), & 0<y\le x/2,\\
        |x-y|^{2\s-1}\exp\big(-cx|x-y|\big), & x/2<y<3x/2,\\
        x^{\a+1/2}y^{-\a+2\s-3/2}\exp(-cy^2), & 3x/2\le y<\infty.
    \end{cases}
\end{equation*}
Therefore, if $0<x<1$, then
$$
\|K_{H,\infty}^{\a,\s}(x,\cdot)\|_p \simeq \simeq 
x^{\a+1/2} \big\|\chi_{\{y>2\}}y^{-\a+2\s-3/2}\exp(-cy^2)\big\|_{p}
\simeq x^{\a+1/2}.
$$
If $x>4$, then on the decisive interval $(x\slash2,3x\slash2)$ we have
\begin{align*}
\int_{x\slash2}^{3x\slash2} K^{\a,\s}_{H,\infty}(x,y)^p\,dy &
\simeq \simeq \int_{x\slash2}^{3x\slash2} |x-y|^{(2\s-1)p}\exp(-cpx|x-y|)\,dy \\
&= 2  x^{-(2\s-1)p-1}\int_{0}^{x^2\slash2} \exp(-cpu)u^{(2\s-1)p}\,du\\
&\simeq  x^{(1-2\s)p-1},
\end{align*}
provided that $\frac{1}p > 1-2\s$; this condition is necessary and sufficient for finiteness of the integral.
As easily verified, the relevant integrals over $(3x/2,\infty)$ and $(0,x/2)$ are controlled by
$x^{(1-2\s)p-1}$. In the latter case one has to impose the condition $\frac{1}p>-\a-1/2$ in case
$\a < -1/2$ since otherwise the integral is infinite. Checking \eqref{smr} for the $p$ in question is
straightforward.
\end{proof}

\begin{proof}[{Proof of Lemma \ref{lem:lag_glob_H}}]
In view of \eqref{linkKH} and Theorem \ref{thm:conv_ker} (ii), for $\a \ge -1/2$ the kernel
$K_{H,\infty}^{\a,\s}(x,y)$ is controlled by $K_{\infty}^{-1/2,\s}(x,y)$. Thus $I_{H,\infty}^{\a,\s}$
inherits the $L^p-L^q$ boundedness of $I_{\infty}^{-1/2,\s}$ 
(note that $d\mu_{-1/2}$ is the Lebesgue measure).
This together with Lemma \ref{lem:lag_glob} gives the positive results of the lemma in case $\a \ge -1/2$.

Next observe that for any $\a > -1$, the two above mentioned kernels are comparable if the arguments are,
see \eqref{asym},
\begin{equation} \label{krel}
K_{H,\infty}^{\a,\s}(x,y) \simeq K_{\infty}^{-1/2,\s}(x,y), \qquad x/2 < y < 2x.
\end{equation}
So to prove the required negative result in case $p<\infty$ we can use the counterexample from (a)
of the proof of Lemma \ref{lem:lag_glob}, since it involves only comparable arguments of the kernel.
In case $p=\infty$ the conclusion follows by Lemma \ref{lem:ass_H}, since we can write
$$
\|I_{H,\infty}^{\a,\s}\boldsymbol{1}\|_q^q \gtrsim \int_1^{\infty} x^{-2\s q}\, dx = \infty
$$
provided that $\frac{1}q \ge 2\s$ and, in addition, $\frac{1}q > -\a-1/2$ in case $\a< -1/2$.

It remains to justify the $L^p-L^q$ boundedness in case $\a < -1/2$. Because of \eqref{krel} and
Lemma~\ref{lem:lag_glob}, it is enough to study the mutually dual integral operators
\begin{align*}
U_1^{\a,\s}f(x) & = \int_0^{x/2} K_{H,\infty}^{\a,\s}(x,y)f(y)\, dy,\\
U_2^{\a,\s}f(x) & = \int_{2x}^{\infty} K_{H,\infty}^{\a,\s}(x,y)f(y)\, dy.
\end{align*}
Assuming that $p>1$ and $q < \infty$, we will show that $U_1^{\a,\s}$ and $U_2^{\a,\s}$ are
$L^p-L^q$ bounded when
$$
\frac{1}{p'} > -\a-1/2 \quad \textrm{and} \quad \frac{1}q > -\a-1/2 \quad \textrm{and} \quad
	\frac{1}q \ge \frac{1}p - 2\s.
$$
This will finish the proof.

By \eqref{linkKH}, Theorem \ref{thm:conv_ker} (ii) and H\"older's inequality,
$$
|U_1^{\a,\s}f(x)| \lesssim x^{-\a-1/2} \exp\big(-cx^2\big) 
	\left\{ 
	\begin{array}{ll}
		x^{2\s-1}, & \s < 1/2 \\
		1+\log^+\frac{1}{x}, & \s=1/2 \\
		(1+x)^{1-2\s}, & \s > 1/2
	\end{array}
	\right\}
	 \big\|\chi_{\{y < x\}} y^{\a+1/2}\big\|_{p'} \|f\|_p.
$$
If $(\a+1/2)p'>-1$, the $L^{p'}$ norm here is finite and comparable to $x^{\a+3/2-1/p}$. Then we get
$$
|U_1^{\a,\s}f(x)| \lesssim g^{\s}(x) \|f\|_p, \qquad x >0,
$$
where
$$
g^{\s}(x) = x^{1-1/p} \exp\big(-cx^2\big) 
	\begin{cases}
		x^{2\s-1}, & \s < 1/2, \\
		1+\log^+\frac{1}{x}, & \s=1/2, \\
		(1+x)^{1-2\s}, & \s > 1/2.
	\end{cases}
$$
It is easy to check that $g^{\s}\in L^q$ when $\s \ge 1/2$. The same is true for $\s<1/2$ under the
additional condition $\frac{1}q> \frac{1}p -2\s$. So in these cases $U_1^{\a,\s}$ is $L^p-L^q$ bounded.
For $\s<1/2$ and $\frac{1}q= \frac{1}p -2\s$ we have $g^{\s}(x) = x^{-1/q} \exp(-cx^2)$.
Since now $g^{\s}$ belongs to weak $L^q$, we see that $U_1^{\a,\s}$ is of weak type $(p,q)$.
Now the $L^p-L^q$ boundedness follows by the Marcinkiewicz interpolation theorem.

Considering $U_{2}^{\a,\s}$, we recall that it is the dual of $U_1^{\a,\s}$ and use the already proved
results for $U_{1}^{\a,\s}$. This gives the desired $L^p-L^q$ boundedness, except for the case $q=1$
which we now treat separately. By \eqref{linkKH} and Theorem \ref{thm:conv_ker} (ii),
$$
|U_2^{\a,\s}f(x)| \lesssim x^{\a+1/2} \int_x^{\infty} y^{-\a-1/2}\exp\big(-cy^2\big)
	\left\{ 
	\begin{array}{ll}
		y^{2\s-1}, & \s < 1/2 \\
		1+\log^+\frac{1}{y}, & \s=1/2 \\
		(1+y)^{1-2\s}, & \s > 1/2
	\end{array}
	\right\}
	|f(y)|\, dy.
$$
Integrating in $x$ and changing the order of integration produces
$$
\|U_2^{\a,\s}f\|_1 \lesssim \int_0^{\infty} y^{1/p} g^{\s}(y) |f(y)|\, dy.
$$
Since the function $y \mapsto y^{1/p}g^{\s}(y)$ belongs to $L^{p'}$, the conclusion follows by
H\"older's inequality. 
\end{proof}

\subsection{$\boldsymbol{L^p-L^q}$ estimates in the Dunkl-Laguerre setting}
Let us first introduce some extra notation. For a function $f$ on $\mathbb{R}$, define $f_+$ and $f_{-}$
as functions on $\mathbb{R}_+$ given by $f_{\pm}(x) = f(\pm x)$, $x>0$. In a similar way, let
$\mathcal{K}^{\a,\s}_{+}$ and $\mathcal{K}^{\a,\s}_{-}$ be the kernels on $\mathbb{R}_+\times \mathbb{R}_+$
determined by $\mathcal{K}_{\pm}^{\a,\s}(x,y)={K}_D^{\a,\s}(x,\pm y)$, $x,y > 0$. 
Denote the corresponding integral operators related to the measure space $(\mathbb{R}_+,d\mu_{\a})$ by
$\mathcal{I}_{\pm}^{\a,\s}$, respectively.

Clearly, for any fixed $1 \le p \le \infty$,
$$
\|f\|_{L^p(dw_{\a})} \simeq \|f_+\|_{L^p(d\mu_{\a})} + \|f_{-}\|_{L^p(d\mu_{\a})}.
$$
Further, by the symmetry of the kernel, ${K}_D^{\a,\s}(-x,y)={K}_D^{\a,\s}(x,-y)$, and the symmetry
of $w_{\a}$,
$$
\big(I_D^{\a,\s}f\big)_\pm=\mathcal I^{\a,\s}_+(f_\pm)+\mathcal I^{\a,\s}_-(f_\mp).
$$

\begin{proof}[{Proof of Theorem \ref{thm:lag_D}}]
In view of \eqref{kkp1}, the kernels
$\mathcal{K}^{\a,\s}_{\pm}(x,y)$ are controlled by $K^{\a,\s}(x,y)$.
Thus $\mathcal{I}^{\a,\s}_{\pm}$ satisfy
the positive mapping properties from Theorem \ref{thm:LpLqlag}. Therefore,
for the asserted $p$ and $q$ we can write
\begin{align*}
& \|I_D^{\a,\s}f\|_{L^q(dw_{\a})}\\
&\simeq \|(I_D^{\a,\s}f)_+\|_{L^q(d\mu_{\a})} + \|(I_D^{\a,\s}f)_-\|_{L^q(d\mu_{\a})} \\ 
&\le \|\mathcal I^{\a,\s}_+(f_+)\|_{L^q(d\mu_{\a})} +  
\|\mathcal I^{\a,\s}_-(f_-)\|_{L^q(d\mu_{\a})}+\|\mathcal I^{\a,\s}_+(f_-)\|_{L^q(d\mu_{\a})} 
+ \|\mathcal I^{\a,\s}_-(f_+)\|_{L^q(d\mu_{\a})}\\
&\lesssim \|f_+\|_{L^p(d\mu_{\a})} +  \|f_-\|_{L^p(d\mu_{\a})}\\
&\simeq \|f\|_{L^p(dw_{\a})}.
\end{align*}

To show the necessity part, we observe that by \eqref{kkp2} the kernel 
$\mathcal{K}^{\a,\s}_{+}(x,y)$ is comparable to $K^{\a,\s}(x,y)$.
Thus the range of admissible $p$ and $q$ from 
Theorem \ref{thm:LpLqlag} is optimal also for $\mathcal{I}_+^{\a,\s}$.
Now, to finish the proof it suffices to notice that
if ${I}_D^{\a,\s}$ is $L^p-L^q$ bounded, then so is $\mathcal{I}^{\a,\s}_+$. Indeed, take a function
$f$ on $\mathbb{R}_+$ and extend it to $\tilde{f}$ on $\mathbb{R}$ by setting $\tilde{f}(x)=0$ for
$x \notin \mathbb{R}_+$. Then, assuming that ${I}_D^{\a,\s}$ is $L^p-L^q$ bounded,
$$
\|\mathcal{I}_+^{\a,\s}f\|_{L^q(d\mu_{\a})} = \|({I}_D^{\a,\s}\tilde{f})_+\|_{L^q(d\mu_{\a})}
\le \|{I}_D^{\a,\s}\tilde{f}\|_{L^q(dw_{\a})} \lesssim \|\tilde{f}\|_{L^p(dw_{\a})}
= \|f\|_{L^p(d\mu_{\a})}.
$$
The conclusion follows.
\end{proof}


\end{document}